\tikzstyle{vertex}=[circle,draw=black,fill=black,inner sep=0,minimum size=2pt,text=white,font=\footnotesize]
\theoremstyle{plain}
\newtheorem{theorem}{Theorem}[section]
\newtheorem{corollary}[theorem]{Corollary}
\newtheorem{claim}[theorem]{Claim}
\newtheorem{lemma}[theorem]{Lemma}
\newtheorem{conjecture}[theorem]{Conjecture}
\newtheorem*{conjecture*}{Conjecture}
\newtheorem*{problem*}{Problem}
\newtheorem{problem}[theorem]{Problem}
\theoremstyle{definition}
\newtheorem{definition}{Definition}
\DeclareMathOperator{\tr}{tr}
\DeclareMathOperator{\row}{row}
\DeclareMathOperator{\col}{col}
\DeclareMathOperator{\disc}{disc}
\DeclareMathOperator{\herdisc}{herdisc}
\DeclareMathOperator{\dgc}{dgc}
\DeclareMathOperator{\block}{bl}
\DeclareMathOperator{\POL}{POL}
\newcommand{\cF}{\mathcal{F}}
\begin{document}
	\sloppy 
	\title{Factorization norms and  Zarankiewicz problems}
\author{Istv\'an Tomon}
\address{Department of Mathematics and Mathematical Statistics, Ume\r{a} University, Ume\r{a}, 90736, Sweden}
\email{istvan.tomon@umu.se}
\thanks{IT is supported in part by the Swedish Research Council grant VR 2023-03375.}

\begin{abstract}
The $\gamma_2$-norm of Boolean matrices plays an important role in communication complexity and discrepancy theory. In this paper, we study combinatorial properties of this norm, and provide new applications, involving Zarankiewicz type problems.
\begin{itemize}
    \item We show that if $M$ is an $m\times n$ Boolean matrix such that $\gamma_2(M)<\gamma$ and $M$ contains no $t\times t$ all-ones submatrix, then $M$ contains $O_{\gamma,t}(m+n)$ one entries. In other words, graphs of bounded $\gamma_2$-norm are \textbf{degree bounded}.  This addresses a conjecture of Hambardzumyan,  Hatami, and Hatami for locally sparse matrices.
    \item   We prove that if $G$ is a $K_{t,t}$-free incidence graph of $n$ points and $n$ homothets of a polytope $P$ in $\mathbb{R}^d$, then the average degree of $G$ is $O_{d,P}(t(\log n)^{O(d)})$. This is sharp up the $O(.)$ notations. In particular, we prove a more general result on semilinear graphs, which greatly strengthens the work of Basit, Chernikov, Starchenko, Tao, and Tran.
\end{itemize}
\end{abstract}


\maketitle
    
\section{Introduction}

Given a real matrix $M\in \mathbb{R}^{m\times n}$, the \emph{$\gamma_2$-norm} (or \emph{max-norm}) of $M$ is defined as
$$\gamma_2(M)=\min_{UV=M} ||U||_{\row} ||V||_{\col},$$
where $||U||_{\row}=||U||_{2\rightarrow\infty}$ is the maximum $\ell_2$-norm of the row vectors of $U$, and $||V||_{\col}=||V||_{1\rightarrow 2}$ is the maximum $\ell_2$-norm of the column vectors of $V$. This norm is equivalent to the \emph{nuclear norm}, defined as 
$$\nu(M)=\inf\left\{\sum_{i=1}^k |w_i|: \exists\mbox{ sign vectors }x_1,\dots,x_k,y_1,\dots,y_k,M=\sum_{i=1}^kw_i x_iy_i^T \right\}.$$
 The $\gamma_2$-norm has found profound applications in communication complexity and discrepancy theory. The aim of this paper is to study combinatorial properties of this norm, and to present new applications in extremal combinatorics and geometry.

\subsection{Matrices of bounded max-norm}

The central problem in communication complexity is to understand the structure of Boolean matrices (i.e. zero-one matrices) of certain complexity measures. For example, the celebrated log-rank conjecture of Lov\'asz and Saks \cite{LS93} is about decomposing low-rank matrices into all-zero and all-one rectangles. In this paper, our goal is to study Boolean matrices of small $\gamma_2$-norm, which also extend the family of small rank matrices. Indeed, a Boolean matrix of rank $r$ has $\gamma_2$-norm at most $\sqrt{r}$ \cite{LS}.

Every Boolean matrix of rank 1 has a very simple structure: it contains an all-ones submatrix, while all other entries are zero. The best known bounds on the log-rank conjecture \cite{L16,ST24} show that every rank $r$ Boolean matrix can be decomposed into $2^{O(\sqrt{r})}$ rank 1 Boolean matrices. This motivates the following analogous question for the $\gamma_2$-norm, proposed in \cite{HHH}. Is it true that every Boolean matrix of $\gamma_2$-norm at most $c$ is the linear combination of $O_c(1)$ Boolean matrices of $\gamma_2$-norm~1?  

The $\gamma_2$-norm of a Boolean matrix is 1 if and only if it is the blow-up of a permutation matrix. Call such a matrix as \emph{blocky matrix} (see Figure \ref{fig:blocky}), and let $\block(M)$ denote the minimum number of blocky-matrices, whose $\pm1$-linear combination is $M$.  As the $\gamma_2$-norm is subadditive, it follows that $\gamma_2(M)\leq \block(M)$. It is conjectured in \cite{HHH} that a weak qualitative converse of this  also holds. The following conjecture is equivalent to Conjecture III in \cite{HHH}, see \cite{HHH} for a detailed explanation.

\begin{figure}
\begin{center}
\begin{tikzpicture}
 \node at (0,0)  {$\begin{pmatrix}1 & 1 & 1 &  &  &  & \\
                                1 & 1 & 1 &  &  &  & \\
                                 &  &  & 1 & 1 &  & \\
                                 &  &  & 1 & 1 &  & \\
                                 &  &  & 1 & 1 &  & \\
                                 &  &  &  &  & 1 & \\
                                 &  &  &  &  & 1 & \\
                                 &  &  &  &  &  & 1
                                  \end{pmatrix}$};
                
\end{tikzpicture}
\caption{A \textbf{blocky matrix}, where blank entries denote zeros. Any row and column permutation is also a blocky matrix.}
\label{fig:blocky}
\end{center}
\end{figure}

\begin{conjecture}\label{conj:1}
   For every $\gamma>0$ there exists $b_{\gamma}$ such that every Boolean matrix $M$ with $\gamma_2(M)\leq \gamma$ satisfies $\block(M)\leq b_{\gamma}$.
\end{conjecture}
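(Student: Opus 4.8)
I would deduce the conjecture, for \emph{locally sparse} $M$, from the degree-bounded statement quoted in the first bullet of the abstract, and then indicate what is missing for the general case. Suppose first that $M$ has no $t\times t$ all-ones submatrix. Since taking a submatrix cannot increase the $\gamma_2$-norm and preserves $K_{t,t}$-freeness, every submatrix $M[R,C]$ has $O_{\gamma,t}(|R|+|C|)$ one-entries; hence every subgraph of the bipartite graph $G$ of the one-entries of $M$ has bounded average degree, so $G$ has bounded degeneracy, and therefore bounded arboricity $a=a(\gamma,t)$. Decompose the edges of $G$ into at most $a$ forests. Root each component of a forest arbitrarily and orient each edge from a vertex to its parent: the edges whose parent lies on the left side form disjoint rectangles $\{p\}\times(\text{children of }p)$ (distinct rows $p$, and each vertex has a unique parent so the column sets are disjoint), hence a blocky matrix, and likewise for the edges whose parent lies on the right side. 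Thus each forest is a $\pm1$-combination (in fact a sum) of two blocky matrices, so $\block(M)\le 2a=O_{\gamma,t}(1)$, which is Conjecture~\ref{conj:1} for $K_{t,t}$-free $M$.

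So everything rests on the degree-bounded theorem, which is where the work lies, and which I would prove through the geometry of the $\gamma_2$-norm. Writing $\gamma_2(M)\le\gamma$ as an exact factorization $M_{ij}=\langle a_i,b_j\rangle$ with $\|a_i\|_2,\|b_j\|_2\le\sqrt\gamma$ and normalizing, one gets unit vectors $\hat a_i,\hat b_j$ on a sphere with $\langle\hat a_i,\hat b_j\rangle\ge 1/\gamma$ exactly when $M_{ij}=1$ and $\langle\hat a_i,\hat b_j\rangle=0$ when $M_{ij}=0$; after a Johnson--Lindenstrauss projection (shrinking the dimension to $O_\gamma(\log(m+n))$ while keeping a margin $\Omega(1/\gamma)$) the one-entries of $M$ become incidences between points and margin-$\Omega(1/\gamma)$ spherical caps in $O_\gamma(\log(m+n))$ dimensions. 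Assuming for contradiction a superlinear number of one-entries, one cleans to a subgraph of large minimum degree on both sides and must extract a $K_{t,t}$: a high-minimum-degree incidence pattern of points and wide caps in low dimension should contain $t$ points and $t$ caps all incident, which one would find by a packing/clustering argument on the sphere, exploiting that the caps have bounded angular radius. This extraction is the main obstacle; note that the related semilinear problem treated in this paper only yields a $(\log n)^{O(d)}$ bound, so obtaining a genuinely linear bound here requires using the dimension-freeness of the $\gamma_2$-norm more carefully than a naive packing would.

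For the conjecture without local sparsity one would first peel off the ``dense'' part of $M$ and reduce to the case above. Truncating the nuclear-norm expansion $M=\sum_i w_i x_i y_i^T$ (sign vectors, $\sum_i|w_i|\le\gamma+o(1)$) after its $O_{\gamma,\varepsilon}(1)$ largest terms --- or running a Frieze--Kannan-type defect-increment argument on the spherical data --- approximates $M$ within cut-norm $\varepsilon mn$ by a $\pm1$-combination of $O_{\gamma,\varepsilon}(1)$ blocky matrices (after rounding the rank-one approximants to blocky ones), and the remainder has $\gamma_2$ at most $\gamma+O_{\gamma,\varepsilon}(1)$ by subadditivity and, for $\varepsilon$ small in $\gamma$, no large all-ones submatrix, so the locally sparse case applies. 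The difficulty is precisely the junction: the weak-regularity step leaves a Boolean remainder of size up to $\varepsilon mn$ rather than nothing, and naively iterating spends a fresh $O(1)$ blocky matrices per round over about $\log(1/\varepsilon)$ rounds --- the same $2^{O(\sqrt r)}$-versus-$O_c(1)$ gap that blocks the log-rank conjecture. Closing it needs an error-absorption principle stating that a matrix which is simultaneously $\gamma_2$-bounded and locally sparse is \emph{already} a bounded blocky combination at no further cost; the degree-bounded theorem is the input one hopes makes this work, but making it uniform along the iteration is the genuine open difficulty, which is why this paper establishes only the (already substantial) locally sparse case.
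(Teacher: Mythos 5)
The statement you were given is Conjecture~\ref{conj:1}, which the paper \emph{does not prove}: it is posed as an open problem, and the paper only establishes partial progress, namely Theorem~\ref{thm:degree_bounded} (degree-boundedness) and its consequence Corollary~\ref{cor:equivalence} (equivalence of bounded $\gamma_2$, bounded $\block$, and bounded degeneracy for $K_{t,t}$-free matrices). You correctly recognize this and rightly restrict the actual proof to the locally sparse case. Your first paragraph is a correct derivation of that case: monotonicity of $\gamma_2$ under submatrices plus the degree-bounded theorem gives bounded degeneracy, and your arboricity/forest decomposition (orient toward parents, read off disjoint stars per side) yields $\block(M)\le 2a=O_{\gamma,t}(1)$. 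This is a slight variant of the paper's Claim~\ref{claim:blocky}, which instead writes $M=M_1+M_2$ with $M_1$ of bounded row degree and $M_2$ of bounded column degree and decomposes each into $d$ thin blocky matrices; the two routes are essentially equivalent and give the same order of magnitude.

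Where you sketch a proof of the degree-bounded theorem itself, however, your approach diverges from the paper and has a gap that you acknowledge but do not close. You propose taking the factorization $M_{ij}=\langle a_i,b_j\rangle$, applying Johnson--Lindenstrauss to land in $O_\gamma(\log(m+n))$ dimensions, viewing one-entries as point--spherical-cap incidences with margin $\Omega(1/\gamma)$, and extracting a $K_{t,t}$ by a packing argument --- a step you call ``the main obstacle.'' The paper proceeds entirely differently: it first proves Theorem~\ref{thm:main}, that $\gamma_2(M)=\Theta(\sqrt{\dgc(M)})$ for four-cycle-free Boolean $M$, via a spectral argument (regularize with Lemma~\ref{lemma:regularize}, set $A=N\circ(uv^T)$ with $u$ weighted by row degrees and $v$ the normalized all-ones vector, then lower-bound $\|A\|_{\tr}$ using Cauchy interlacing applied to dyadic degree classes of $B=AA^T$), and then reduces the general $K_{t,t}$-free case to the four-cycle-free case via the Gir\~ao--Hunter theorem (Theorem~\ref{thm:GH}), which says that any graph of large average degree either contains $K_{t,t}$ or contains an induced four-cycle-free subgraph of moderate average degree. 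Your JL step introduces a $\log(m+n)$-dependent dimension and leaves the $K_{t,t}$-extraction unresolved, whereas the paper's spectral plus extremal-graph-theoretic route is dimension-free and complete. So your write-up is an accurate account of what is known and what is open, but the key quantitative ingredient (Theorem~\ref{thm:degree_bounded}) is not actually established by your sketch, and where it would need to be, you should replace the geometric packing idea with the paper's combination of Theorem~\ref{thm:main} and Theorem~\ref{thm:GH}.
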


We prove some partial results towards Conjecture \ref{conj:1}, namely that it holds for locally sparse matrices, in which case we establish a much stronger result.

\begin{theorem}\label{thm:degree_bounded}
Let $t\geq 2$ be an integer and $\gamma>0$. Then there exists $d=d(\gamma,t)$ such that every $m\times n$ Boolean matrix $M$ with no $t\times t$ all-ones submatrix and $\gamma_2(M)\leq \gamma$ contains at most $d(m+n)$ one entries.
\end{theorem}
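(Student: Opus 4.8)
The plan is to pass to the exact geometric representation afforded by the $\gamma_2$-norm, reduce to a statement about matrices of large minimum degree, and then locate a $t\times t$ all-ones submatrix by exploiting the rigidity of that representation.

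\textbf{Step 1: Geometric representation.} From $\gamma_2(M)\le\gamma$, after rescaling an optimal factorization $M=UV$ so that $\|U\|_{\row}=\|V\|_{\col}\le\sqrt{\gamma}$, we obtain vectors $p_1,\dots,p_m$ (rows of $U$) and $q_1,\dots,q_n$ (columns of $V$) lying in the Euclidean ball of radius $\sqrt{\gamma}$ in some $\mathbb{R}^k$, with $\langle p_i,q_j\rangle=M_{ij}\in\{0,1\}$ for all $i,j$. Thus the one-entries of $M$ are exactly the pairs $(i,j)$ with $\langle p_i,q_j\rangle=1$; writing column $j$ as the affine hyperplane $H_j=\{x:\langle x,q_j\rangle=1\}$, the one-entries form a point--hyperplane incidence structure in which, crucially, a non-incident pair satisfies $\langle p_i,q_j\rangle=0$ \emph{exactly}. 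Two elementary consequences will be used: (i) every edge pair has angle at most $\arccos(1/\gamma)<\pi/2$, because $\cos\angle(p_i,q_j)=1/(\|p_i\|\,\|q_j\|)\ge 1/\gamma$ whenever $\langle p_i,q_j\rangle=1$; (ii) $\gamma_2(\cdot)\le\gamma$ and $K_{t,t}$-freeness both pass to submatrices.

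\textbf{Step 2: Reduction to bounded minimum degree.} It suffices to prove the following \emph{Main Lemma}: there is $\delta_0=\delta_0(\gamma,t)$ such that every Boolean matrix $M$ with $\gamma_2(M)\le\gamma$ in which every row and every column contains at least $\delta_0$ ones must contain a $t\times t$ all-ones submatrix. Granting this, take a $K_{t,t}$-free $M$ with $\gamma_2(M)\le\gamma$ and repeatedly delete any row or column currently containing fewer than $\delta_0$ ones. Each deletion removes fewer than $\delta_0$ ones and there are at most $m+n$ deletions; if the process empties the matrix then $M$ had fewer than $\delta_0(m+n)$ ones, and otherwise the surviving submatrix has minimum row- and column-degree at least $\delta_0$ and still satisfies $\gamma_2\le\gamma$, so by the Main Lemma it contains a $K_{t,t}$ --- contradicting $K_{t,t}$-freeness. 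Hence $M$ has fewer than $\delta_0(\gamma,t)(m+n)$ one entries, which is Theorem~\ref{thm:degree_bounded} with $d=\delta_0$.

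\textbf{Step 3: Proving the Main Lemma.} A $K_{t,t}$ corresponds to $t$ rows and $t$ columns with every $\langle p_{i_a},q_{j_b}\rangle=1$, i.e.\ to $t$ of the $q_j$ lying on a common affine flat $\bigcap_a\{x:\langle p_{i_a},x\rangle=1\}$ together with $t$ matching rows. For a column $q$ with its $\ge\delta_0$ partner rows $R=\{i:\langle p_i,q\rangle=1\}$, all the $p_i$ ($i\in R$) lie on $\{\langle x,q\rangle=1\}\cap B(0,\sqrt{\gamma})$, a bounded-radius ball inside a $(k-1)$-flat, and restricting the whole configuration to these rows again produces a configuration of the same type one dimension lower --- but $k$ is unbounded in terms of $\gamma$, so a naive induction on dimension fails. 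The remedy is consequence (i): since all edge pairs make angle bounded away from $\pi/2$, one discretizes the directions and magnitudes of the $p_i$ and of the $q_j$ into $O_{\gamma,t}(1)$ classes, and argues that the $\gamma_2$-bound forces, between any class of rows and any class of columns, a \emph{blocky}-type dichotomy: the bipartite density is either essentially $0$ or essentially $1$. A class pair of positive (hence near-complete) density with both sides of size at least some $s_0(\gamma,t)$ then contains a $K_{t,t}$ by elementary counting, and the minimum-degree hypothesis guarantees that enough ones survive to land in such a pair once $\delta_0$ is large enough.

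\textbf{Main obstacle.} The crux is obtaining a \emph{dimension-free} bound, which is where the exact $\{0,1\}$-quantization (not merely the sign pattern) has to be used. Bounded $\gamma_2$ does force bounded VC dimension --- an $s\times s$ submatrix realizing all $2^s$ row-patterns would contain an arbitrary $r\times r$ Boolean matrix for every $r\le s$, hence one with $\gamma_2=\Omega(\sqrt{r})$, forcing $s=O(\gamma^2)$ --- but bounded VC dimension alone is not enough for a \emph{linear} bound: the point--line incidence graph in the plane has VC dimension at most $3$, is $K_{2,2}$-free, and has $\Theta(n^{4/3})$ edges (and, correspondingly, unbounded $\gamma_2$-norm). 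So the discretization in Step 3 must be controlled by the metric quantization coming from $\gamma_2$, and the ``positive density $\Rightarrow$ near-complete'' dichotomy must be made quantitatively lossless; in particular one must rule out that many one-entries hide inside low-density class pairs, which is exactly the slack that a crude regularity argument would leave (yielding only an $O_\gamma(mn)$-type count). Making this step exact is the heart of the proof.
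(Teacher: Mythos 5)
Your Steps 1 and 2 are sound: the factorization picture is the right way to think about $\gamma_2$, and the deletion argument in Step 2 is a correct and standard reduction to the case of bounded minimum row/column degree. The problem is Step 3, which is not a proof but a sketch, and which you yourself acknowledge has an unresolved gap at its center. Two concrete issues. First, you propose to ``discretize the directions and magnitudes of the $p_i$ and $q_j$ into $O_{\gamma,t}(1)$ classes,'' but the ambient dimension $k$ of the factorization is not bounded in terms of $\gamma$ or $t$, and the number of pieces needed to cover a Euclidean ball by small-diameter cells grows exponentially in $k$. So the discretization into $O_{\gamma,t}(1)$ classes does not exist, which is exactly the dimension-free obstruction you flag; you name the obstacle but do not overcome it. Second, the ``blocky-type dichotomy'' you invoke---that between any class pair the density is essentially $0$ or essentially $1$---is, once made precise, tantamount to the structural decomposition asserted by Conjecture~\ref{conj:1} in the paper, which remains open. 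Proving the theorem via that dichotomy would be circular or at least harder than the target statement.

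The paper's proof takes a fundamentally different route that avoids geometric discretization entirely. It first proves Theorem~\ref{thm:main}: for \emph{four cycle-free} Boolean matrices, $\gamma_2(M)=\Theta(\sqrt{\dgc(M)})$, by a spectral argument (bounding $\|N\circ(uv^T)\|_{\tr}$ from below via the eigenvalue distribution of $AA^T$, using trace and Frobenius-norm estimates together with Cauchy interlacing, with the four-cycle-freeness controlling the off-diagonal mass of $AA^T$). It then invokes the graph-theoretic result of Gir\~ao and Hunter (Theorem~\ref{thm:GH}): any graph of average degree at least $t^{5000k^4}$ contains either $K_{t,t}$ or an induced four cycle-free subgraph of average degree $\geq k$. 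Combining, if $\gamma_2(M)\leq\gamma$ and $M$ is $K_{t,t}$-free, any four-cycle-free submatrix has average degree $O(\gamma^2)$, so by Gir\~ao--Hunter the average degree of $M$ is at most $t^{O(\gamma^8)}$. Your Step~2 reduction would slot in fine as a preliminary, but you would still need the spectral analysis of the four-cycle-free case and the Gir\~ao--Hunter extraction (or some substitute) to close the argument.
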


In the Concluding remarks, we discuss quantitative bounds on $d(\gamma,t)$. Theorem \ref{thm:degree_bounded} implies Conjecture \ref{conj:1} for Boolean matrices that avoid large all-ones submatrices, as we get the following corollary. Define the \emph{degeneracy} of a Boolean matrix $M$ as the smallest integer $d$ such that every submatrix of $M$ has a row or a column with at most $d$ one entries. In other words, if $M$ is the bi-adjacency matrix of a bipartite graph $G$, then the degeneracy of $M$ is equal to the degeneracy of $G$. Furthermore, say that a blocky matrix is \emph{thin} if every block of it has one row or one column.

\begin{corollary}\label{cor:equivalence}
    Let $\mathcal{M}$ be a family of Boolean matrices which contain no $t\times t$ all-ones submatrix. Then the following are equivalent.
    \begin{enumerate}
        \item $\exists \gamma$ s.t. $\forall M\in \mathcal{M}$: $\gamma_2(M)\leq \gamma$.
        \item  $\exists b$ s.t. $\forall M\in \mathcal{M}$: $M$ is the sum of at most $b$ thin blocky matrices.
        \item $\exists d$ s.t. $\forall M\in \mathcal{M}$: $M$ has degeneracy at most $d$.
    \end{enumerate}
\end{corollary}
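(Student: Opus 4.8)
The plan is to prove the cycle of implications $(2)\Rightarrow(1)\Rightarrow(3)\Rightarrow(2)$. The implication $(2)\Rightarrow(1)$ is immediate and does not use the $K_{t,t}$-freeness: a thin blocky matrix is in particular a blocky matrix, which has $\gamma_2$-norm $1$, and since $\gamma_2$ is a norm (subadditive), a sum of at most $b$ thin blocky matrices has $\gamma_2$-norm at most $b$; so we may take $\gamma=b$.

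For $(1)\Rightarrow(3)$, suppose every $M\in\mathcal{M}$ satisfies $\gamma_2(M)\le\gamma$. Let $M\in\mathcal{M}$ and let $M'$ be any submatrix of $M$, say of size $m'\times n'$. Restricting to a submatrix does not increase the $\gamma_2$-norm (one can take the same factorization $UV=M$ and delete the corresponding rows of $U$ and columns of $V$, which only decreases the relevant maxima), so $\gamma_2(M')\le\gamma$; moreover $M'$ still has no $t\times t$ all-ones submatrix. By Theorem \ref{thm:degree_bounded} applied to $M'$ with parameter $\gamma$ and $t$, the matrix $M'$ has at most $d(\gamma,t)(m'+n')$ one entries. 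Viewing $M'$ as the bi-adjacency matrix of a bipartite graph on $m'+n'$ vertices, it has at most $d(\gamma,t)(m'+n')$ edges, hence average degree at most $2d(\gamma,t)$, so some vertex — i.e. some row or column of $M'$ — has at most $2d(\gamma,t)$ one entries. Thus $M$ has degeneracy at most $2d(\gamma,t)=:d$, a bound independent of $M$.

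For $(3)\Rightarrow(2)$, this is a standard greedy/peeling argument converting a degeneracy bound into a decomposition into thin blocky matrices, and it does not use $K_{t,t}$-freeness either. Suppose every $M\in\mathcal{M}$ has degeneracy at most $d$. Fix $M\in\mathcal{M}$, identified with a bipartite graph $G$. A degeneracy-$d$ ordering of $V(G)$ lets us orient each edge towards the vertex appearing later, so that every vertex has in-degree at most $d$; equivalently, we may partition $E(G)$ into at most $d$ ``forests'' in the sense that the edges can be $d$-colored so that each color class has maximum in-degree $1$ under this orientation. More directly: repeatedly pick a vertex $v$ of current degree at most $d$, record the star at $v$, and delete $v$; this peels $G$ into at most (number of vertices) stars, but grouping is needed to get a bound depending only on $d$. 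The clean way is the orientation: orient every edge towards the later endpoint in the degeneracy order, so in-degree $\le d$ everywhere; color the in-edges of each vertex with colors $1,\dots,d$; then each color class $i$ is a graph in which every vertex has at most one in-edge, i.e. a disjoint union of stars centered at the tails (plus isolated matched edges), which is exactly a thin blocky matrix after permuting rows and columns. This writes $M$ as the sum of at most $d$ thin blocky matrices, with $b=d$ independent of $M$.

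The only substantive input is $(1)\Rightarrow(3)$, which is essentially a restatement of Theorem \ref{thm:degree_bounded} together with the observation that $\gamma_2$-norm and $K_{t,t}$-freeness are both inherited by submatrices; the remaining two implications are routine and norm/graph-theoretic in nature, so I expect no real obstacle beyond stating the peeling argument carefully enough to see that the number of thin blocky pieces is controlled by $d$ alone (and not by the size of $M$), which the degeneracy-ordering orientation handles cleanly.
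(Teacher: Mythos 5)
Your overall structure matches the paper's proof (which goes $2\Leftrightarrow 3$ via its Claim~\ref{claim:blocky}, $2\Rightarrow 1$ via $\gamma_2(M)\le \block(M)$, and $1\Rightarrow 3$ via Theorem~\ref{thm:degree_bounded} applied to every submatrix), and your $(2)\Rightarrow(1)$ and $(1)\Rightarrow(3)$ are correct and essentially identical to the paper's. However, your $(3)\Rightarrow(2)$ contains a genuine error.

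You claim that after orienting along a degeneracy order and splitting the in-edges (or out-edges) into $d$ color classes, each color class is a disjoint union of stars, hence a thin blocky matrix, giving $b=d$. This is false. A bipartite graph in which every vertex has in-degree at most $1$ under some orientation is a functional graph, not a star forest: for the path $a_1\to b_1\to a_2\to b_2\to a_3$ each vertex has out-degree $1$, but the underlying graph is a path of length $4$, whose middle vertices have degree $2$ with distinct neighborhoods, so it is not a blocky matrix at all. In fact the bound $b=d$ cannot hold: the $2\times2$ matrix $\left(\begin{smallmatrix}1&0\\1&1\end{smallmatrix}\right)$ (a path, degeneracy $1$) requires two thin blocky matrices. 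The correct argument, as in the paper's Claim~\ref{claim:blocky}, first partitions the edges into $M_1$ (edges whose degeneracy-earlier endpoint is a row) and $M_2$ (edges whose degeneracy-earlier endpoint is a column); then every row of $M_1$ has at most $d$ ones, so splitting $M_1$ by ``the $\ell$-th one in each row'' yields $d$ matrices each with at most one $1$ per row, which genuinely are thin blocky; similarly for $M_2$ by columns. This gives $b\le 2d$, which suffices for the corollary. (A minor secondary slip: orienting towards the later endpoint in a degeneracy order bounds the out-degree, not the in-degree, by $d$; you have the two reversed, though this is cosmetic compared to the star-forest error.)
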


 As further discussed in \cite{HHH}, Conjecture \ref{conj:1} has intricate connections to a celebrated result of Cohen \cite{Cohen} on idempotents, which was quantitatively strengthened by Green and Sanders \cite{GreenSanders} and Sanders \cite{Sanders}. Furthermore, we note that decompositions of matrices into linear combinations of blocky matrices is studied by Hambardzumyan, Hatami, and Hatami \cite{HHH} related to costs of certain communication protocols, while  Avraham and Yehudayoff \cite{AY} proves bounds on the minimal such decompositions for many natural families of matrices. 

Say that a Boolean matrix is \emph{four cycle-free} if it contains no $2\times 2$ all-ones submatrix. The main ingredient in the proof of Theorem \ref{thm:degree_bounded} is the following result, which shows that in case $M$ is four cycle-free, then the $\gamma_2$-norm of $M$ is essentially the square-root of its degeneracy. This result is quite powerful: while the $\gamma_2$-norm of classes of matrices is hard to estimate from a theoretical perspective, the degeneracy is a very easy parameter to handle. 

\begin{theorem}\label{thm:main}
Let $M$ be a four cycle-free Boolean matrix of degeneracy $d$. Then $$\gamma_2(M)=\Theta(\sqrt{d}).$$
\end{theorem}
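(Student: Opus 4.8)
\subsection*{Proof proposal}

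The plan is to prove the two inequalities of $\gamma_2(M)=\Theta(\sqrt d)$ separately. The upper bound $\gamma_2(M)=O(\sqrt d)$ in fact uses nothing about four cycles, while the lower bound $\gamma_2(M)=\Omega(\sqrt d)$ is exactly where four cycle-freeness enters, and is where I expect the real work to be.

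For the upper bound I would first record the elementary fact that for \emph{any} Boolean matrix $N$ one has $\gamma_2(N)\le\sqrt{\Delta}$, where $\Delta$ is the smaller of the maximum row sum and the maximum column sum of $N$. Indeed, taking $u_i$ to be the $i$-th row of $N$ regarded as a $0/1$ vector and $v_j=e_j$ gives a factorization $N_{ij}=\langle u_i,v_j\rangle$ with $\|u_i\|_2=\sqrt{\deg(i)}$ and $\|v_j\|_2=1$; rescaling yields $\gamma_2(N)\le\sqrt{\max_i\deg(i)}$, and transposing gives the column version. Now suppose $M$ has degeneracy $d$. Fix a vertex ordering of the underlying bipartite graph in which every vertex has at most $d$ neighbours preceding it, and orient every edge towards its earlier endpoint; each vertex then has out-degree at most $d$. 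Writing $A$ for the part of $M$ recording edges oriented from a row to a column, and $B$ for the part recording edges oriented from a column to a row, we have $M=A+B$, where $A$ has at most $d$ ones per row and $B$ has at most $d$ ones per column. By the fact above and subadditivity of $\gamma_2$, $\gamma_2(M)\le\gamma_2(A)+\gamma_2(B)\le 2\sqrt d$.

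For the lower bound, the first step is to pass to a dense piece: since $M$ has degeneracy $d$, some submatrix $M'$ has every row and every column containing at least $d$ ones, and $\gamma_2(M)\ge\gamma_2(M')$ since $\gamma_2$ is monotone under taking submatrices. So it suffices to show $\gamma_2(M')\ge c\sqrt d$ when the bipartite graph $H$ underlying $M'$ is four cycle-free with minimum degree at least $d$. The tool I would use is a standard consequence of the semidefinite characterization of $\gamma_2$: for all probability vectors $\pi$ on the rows and $\rho$ on the columns,
\[
\gamma_2(M')\ \ge\ \bigl\|\,(\sqrt{\pi_i\rho_j}\,M'_{ij})_{i,j}\,\bigr\|_{*},
\]
a weighted form of the spectral bound $\gamma_2(N)\ge\|N\|_{*}/\sqrt{mn}$. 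It therefore suffices to exhibit weights for which this weighted nuclear norm is $\Omega(\sqrt d)$. Using $\|A\|_{*}\ge(\|A\|_F^2-\sigma_1(A)^2)/\sigma_2(A)$ and taking $\pi,\rho$ uniform on the two sides of a subgraph $H'\subseteq H$, the task reduces to the following: find $H'$ with both sides of size $\Theta(d^2)$, minimum degree $\Omega(d)$ inside $H'$, and second largest singular value $O(\sqrt d)$. Such an $H'$ has $\Theta(d^3)$ edges and $\sigma_1=\Theta(d)$, so $\|M'|_{H'}\|_{*}=\Omega(d^{2}\sqrt d)$ and the weighted nuclear norm is $\Omega(d^{2}\sqrt d/d^2)=\Omega(\sqrt d)$. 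On the incidence graph of a projective plane of order $\approx d$ this is exactly what happens: there $M'M'^{T}=dI+(J-I)$, so $\sigma_2=\sqrt d$ and the bound is tight.

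Locating a subgraph $H'$ with second singular value $O(\sqrt d)$ is, I expect, the main obstacle. By the K\H ov\'ari--S\'os--Tur\'an bound a four cycle-free graph of minimum degree $d$ already has $\Omega(d^2)$ vertices on each side, so (after an appropriate averaging/dyadic regularization to an almost-regular four cycle-free subgraph on $\Theta(d^2)$ vertices) the subgraph sits at a constant fraction of the extremal edge count; the governing heuristic is that near-extremal $C_4$-free graphs are quasirandom, and the genuine extremal examples — incidence graphs of generalized polygons — are pseudorandom with $\sigma_2=\Theta(\sqrt d)$. Turning this heuristic into a bound $\sigma_2(M'|_{H'})=O(\sqrt d)$ — for instance via a count of short closed walks, i.e. controlling $\operatorname{tr}((M'|_{H'}(M'|_{H'})^{T})^{k})$ for small $k$ using the co-degree structure $M'|_{H'}(M'|_{H'})^{T}=D+S$ with $S$ a $0/1$ matrix (this is precisely where bounded co-degrees, i.e. four cycle-freeness, is used) — is the technical heart of the proof. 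An alternative route that I would also try is to avoid extracting $H'$ altogether and instead choose $\pi,\rho$ non-uniformly (e.g. proportional to a truncation of the top singular vector of $H$) so as to isolate the pseudorandom core of $H$ directly; in either formulation, the spectral control of a near-extremal four cycle-free graph is the crux.
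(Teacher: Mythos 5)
Your upper bound is the paper's argument almost verbatim: split the matrix along a degeneracy ordering into a part with bounded row sums and a part with bounded column sums, bound each by $\sqrt d$ via the trivial factorization, and add. This matches Claim~\ref{claim:dgc}, so that half is correct.

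The lower bound is where you diverge, and the route you sketch has a genuine gap that I do not think can be filled. Your plan requires locating, inside a four-cycle-free graph of minimum degree $d$, an induced subgraph $H'$ with $\Theta(d^2)$ vertices on each side, $\Theta(d^3)$ edges (so internal minimum degree $\Omega(d)$), and second singular value $O(\sqrt d)$. This asks for a near-extremal, spectrally pseudorandom piece, and such a piece need not exist. Take $G$ to be the incidence graph of a generalized quadrangle of order $(q,q)$: it is four-cycle-free, $(q{+}1)$-regular, and has $\Theta(q^3)$ vertices on each side. Any $\Theta(q^2)\times\Theta(q^2)$ induced sub-bipartite-graph then has far fewer than $\Theta(q^3)$ edges (in fact, far from $\Omega(q)$ average internal degree), because a $(q{+}1)$-regular graph on $\Theta(q^3)+\Theta(q^3)$ vertices has no dense $q^2\times q^2$ corner — yet $\gamma_2 = \Theta(\sqrt q)$ must still hold for these graphs, so the theorem is true but your method cannot reach them. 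Even when a dense piece does exist, certifying $\sigma_2 = O(\sqrt d)$ would essentially require proving that near-extremal $C_4$-free graphs are spectrally quasirandom, which is much stronger than anything the theorem needs and is not known. You flag this step as ``the technical heart,'' but it is closer to a dead end than a heart.

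For comparison, the paper's argument never extracts a dense pseudorandom subgraph. It first applies a one-sided regularization (Lemma~\ref{lemma:regularize}) giving a submatrix $N$ of average degree $\Theta(d)$, minimum degree $\Omega(d)$, with (say) column sums $O(d)$ but rows of wildly varying degree. It then takes the same weighted nuclear-norm lower bound you invoke, but with an explicit weight $u(i)\propto\sqrt{d_i}$ (square root of the $i$-th row degree) on the row side and uniform $v$, and studies the Gram matrix $B = AA^T$ of $A = N\circ(uv^T)$. Four-cycle-freeness makes $B$'s off-diagonal entries simple (at most one shared column per row pair), and after splitting the rows into dyadic degree classes $I_t$, a trace-versus-Frobenius comparison on each principal block $B[I_t\times I_t]$ shows that $\Omega(|I_t|)$ of its eigenvalues are at least $\Omega(3^{2t}/(fn))$. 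Cauchy interlacing lifts these to eigenvalues of $B$, i.e.\ to singular values of $A$, and a careful summation over $t$ then gives $\|A\|_{\mathrm{tr}} = \Omega(\sqrt d)$. The key point is that this produces a large \emph{number} of moderately sized singular values directly, without ever needing a spectral gap or a dense sub-incidence-structure, so it is robust to the graph having $\Theta(d^3)$ or more vertices per side.
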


\noindent
We discuss a number of applications of this theorem in the following sections.

\subsection{Communication complexity}

The $\gamma_2$-norm is an important tool in communication complexity, as demonstrated by a celebrated paper of Linial and Shraibman \cite{LS}. Given an $m\times n$ matrix $A$, let $\tilde{\gamma}_2(A)$ denote the minimum $\gamma_2$-norm of an $m\times n$ matrix $B$ that satisfies $|A(i,j)-B(i,j)|\leq 1/3$ for every entry $(i,j)\in [m]\times [n]$. Denoting by $R(A)$ the  public-coin
randomized communication complexity of $A$, and by $Q^*(A)$ the quantum communication complexity with
shared entanglement, the following inequality is proved in \cite{LS}:
$$\log \tilde{\gamma}_2(A) \lesssim Q^{*}(A)\leq R(A).$$
Linial and Shraibman \cite{LS} proposed the problem whether $\tilde{\gamma}_2(A)$ can be replaced with $\gamma_2(A)$ to get a similar lower bound for $R(A)$. However, this was recently disproved by  Cheung,  Hatami, Hosseini, and Shirley \cite{CHHS} in a strong sense, who constructed an $n\times n$  Boolean matrix $M$ such that $\gamma_2(M)\geq \Omega(n^{1/32})$ and $R(M)=O(\log n)$. Their main technical result is as follows.

Let $1\leq q\leq p$ be integers, and let $P=P(q,p)$ be the $qp\times qp$ Boolean matrix, whose rows and columns are indexed by the elements of $[q]\times \{0,\dots,p-1\}$, and its entries are given by $P[(x,x'),(y,y')]=1$ iff $xy+x'=y'$. Furthermore, let $P_p=P_p(q,p)$ be the matrix defined almost identically, but $P[(x,x'),(y,y')]=1$ iff $xy+x'=y'$ holds modulo $p$. In \cite{CHHS}, it is proved, by technical applications of Fourier analysis, that $\gamma_2(P_p)=\Omega(q^{1/8})$ if $q\leq \sqrt{p}$, and $\gamma_2(P)=\Omega(q^{1/8})$ if $q\leq p^{1/3}$.

However, note that $P$ and $P_p$ are the incidence matrices of points and lines, so they are four cycle-free. Therefore, Theorem \ref{thm:main} immediately implies the following improvements.

\begin{theorem}
Let $1\leq q\leq p-1$. Then  $\gamma_2(P_p)=\Theta(\sqrt{q})$ and $\gamma_2(P)=\Theta(\min\{\sqrt{q},p^{1/4}\})$.
\end{theorem}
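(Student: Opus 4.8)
The plan is to read off both identities from Theorem~\ref{thm:main}: since $P_p$ and $P$ are point--line incidence matrices they are four cycle-free, so it suffices to compute their degeneracies. For $P_p$ this is immediate. Fixing a row index $(x,x')$, for every $y\in[q]$ there is exactly one $y'\in\{0,\dots,p-1\}$ with $xy+x'\equiv y'\pmod p$, so the row $(x,x')$ has precisely $q$ ones, and by the symmetric argument so does every column. Hence every submatrix of $P_p$ has a row with at most $q$ ones, while $P_p$ itself has all rows and columns of weight exactly $q$; so the degeneracy of $P_p$ is $q$, and Theorem~\ref{thm:main} gives $\gamma_2(P_p)=\Theta(\sqrt q)$.

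For $P$ I would show that its degeneracy is $\Theta(\min\{q,\sqrt p\})$. For the upper bound, view a submatrix as a bipartite graph $H$ with minimum degree $\delta=\delta(H)\ge 1$. Trivially $\delta\le q$, since a row $(x,x')$ of $P$ has ones only among the columns $(y,xy+x')$ with $y\in[q]$. Moreover the row $(x,x')$ has at most $(p-1-x')/x<p/x$ ones and the column $(y,y')$ has at most $y'/y<p/y$ ones, so every row of $H$ has first coordinate $<p/\delta$ and every column of $H$ has first coordinate $<p/\delta$. A row of $H$ is joined to at most one column for each value of the first coordinate, and all its neighbours have first coordinate $<p/\delta$, so it has fewer than $p/\delta$ neighbours; since it has at least $\delta$, we get $\delta<\sqrt p$. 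Thus the degeneracy of $P$ is $O(\min\{q,\sqrt p\})$.

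For the lower bound I would exhibit a dense submatrix and compare its number of edges with its number of vertices. We may assume $\min\{q,\sqrt p\}$ exceeds an absolute constant, else $\gamma_2(P)=\Theta(1)$ and there is nothing to prove. Set $m=\lfloor\tfrac14\min\{q,\lfloor\sqrt p\rfloor\}\rfloor$, so that $m\le q$ and $2m^2\le p-1$, and let $H$ be the submatrix of $P$ on rows $A=\{(x,x'):1\le x\le m,\ 0\le x'\le m^2\}$ and columns $B=\{(y,y'):1\le y\le m,\ 0\le y'\le 2m^2\}$. For $(x,x')\in A$ and $y\in[m]$ we have $xy+x'\le 2m^2\le p-1$, so $(y,xy+x')\in B$ and $P$ joins $(x,x')$ to it; as these are the only ones of that row inside $B$, every row of $A$ has exactly $m$ neighbours in $H$, whence $H$ has $m\,|A|\ge m^4$ edges. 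On the other hand $|A|+|B|=O(m^3)$. Since any graph with $e$ edges and $v$ vertices has degeneracy at least $e/v$, the degeneracy of $H$---and hence of $P$---is $\Omega(m)=\Omega(\min\{q,\sqrt p\})$. Combined with the previous paragraph, $P$ has degeneracy $\Theta(\min\{q,\sqrt p\})$, so Theorem~\ref{thm:main} gives $\gamma_2(P)=\Theta(\sqrt{\min\{q,\sqrt p\}})=\Theta(\min\{\sqrt q,p^{1/4}\})$.

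The one genuinely delicate point is the lower bound for $P$. It is tempting to try to locate an (almost) $\min\{q,\sqrt p\}$-regular submatrix directly, but the additive shifts $x'$ and $y'$ produce boundary effects: rows whose $x'$ lies near the top of its interval, or columns whose $y'$ lies there, lose almost all of their neighbours, and a naive construction is forced into an awkward iterative trimming. This is sidestepped by only making the ``row side'' of $H$ regular---which is effortless---and then invoking the bound degeneracy~$\ge e/v$; everything else is routine bookkeeping plus a direct appeal to Theorem~\ref{thm:main}.
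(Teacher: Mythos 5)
Your proof is correct and, for $P_p$ and the lower bound on $\gamma_2(P)$, follows the paper's route through degeneracy and Theorem~\ref{thm:main}; the only cosmetic difference is that you lower-bound the degeneracy of $P$ by exhibiting an explicit $O(m^3)\times O(m^3)$ submatrix with $\Omega(m^4)$ ones, where $m=\Theta(\min\{q,\sqrt{p}\})$, whereas the paper reduces to $q\le\sqrt{p}$ by monotonicity and counts ones in the full matrix. You also carry out the upper bound on $\gamma_2(P)$, which the paper explicitly leaves as an exercise, via a clean and correct argument: in any submatrix of minimum degree $\delta$, every surviving row and column is forced to have first coordinate less than $p/\delta$, and hence $\delta<\sqrt{p}$.
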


\begin{proof}
 Given $x,x',y$, there is a unique $y'$ such that $xy+x'=y' \pmod{p}$, and also given $x,y,y'$, there is a a unique $x'$ such that $xy+x'=y' \pmod{p}$. Therefore, each row and column of $P_p$ contains $q$ one entries, so the degeneracy of $P_p$ is also $q$. By Theorem \ref{thm:main}, we get $\gamma_2(P_p)=\Theta(\sqrt{q})$.

Now let us consider $P$, and let us only prove the lower bound, we leave the upper bound as an exercise. We may assume that $q\leq \sqrt{p}$, as otherwise $P(\sqrt{p},p)$ is a submatrix of $P(q,p)$ and we use that the $\gamma_2$-norm of a submatrix is always at most the $\gamma_2$-norm of the matrix. Given $x\in [q]$, there are at least $qp/4$ solutions of $xy+x'=y'$ with $x,y\in [q]$ and $x',y'\in \{0,\dots,p-1\}$. Therefore, the number of one entries of $P$ is at least $q^2/4$, which means that the degeneracy of $P$ is at least $q/4$. Hence, by Theorem \ref{thm:main}, $\gamma_2(P_p)=\Omega(\sqrt{q})$. 
\end{proof}

Very recently, a similar result was obtained by Cheung, Hatami, Hosseini, Nikolov, Pitassi, and Shirley \cite{CHHNPS}, based on similar ideas. One of their main technical lemmas shows that if $M$ is a four-cycle free Boolean matrix, then $\gamma_2(M)\geq ||M||_2^2/\sqrt{2\Delta}$, where $\Delta$ is the maximum degree of the associated bipartite graph. This gives the same bound as Theorem \ref{thm:main} in case the bipartite graph is close to regular, otherwise Theorem \ref{thm:main} is stronger. In \cite{CHHNPS}, applications of the previous theorem are provided for bounding the \emph{deterministic communication protocol} with oracle access to Equality, denoted by $\texttt{D}^{\texttt{EQ}}$, of the \emph{Integer Inner Product} function $\texttt{IIP}_k^{(n)}$. This quantity is of great interest as it demonstrates large separation between the \emph{randomized communication protocol} and $\texttt{D}^{\texttt{EQ}}$.

\subsection{Zarankiewicz problem}

 Zarankiewicz's problem \cite{Zara} is a central question in extremal graph theory, asking for the maximum number of edges in a bipartite graph $G$ with vertex classes of size $m$ and $n$, which contains no copy of $K_{s,t}$, i.e. the complete bipartite graph with classes of size $s$ and $t$. To simplify notation, we focus on the most interesting case $m=n$ and $s=t$, for which the fundamental K\H{o}v\'ari-S\'os-Tur\'an theorem \cite{KST54} states that the maximum is $O_t(n^{2-1/t})$. On the other hand, the probabilistic deletion method shows the lower bound $\Omega_t(n^{2-2/(t+1)})$, see e.g. \cite{AS}. Therefore, the answer to Zarankiewicz's problem is of the order $n^{2-\Theta(1/t)}$.

 In the past two decades, Zarankiewicz type problems have been extensively studied in the setting in which we restrict the host graph $G$ to certain special graph families. Such results have important applications in incidence geometry \cite{FPSSZ,MST}, for example. A celebrated result of Fox, Pach, Suk, Scheffer, and Zahl \cite{FPSSZ} proves the following in this area. Say that a graph $G$ is \emph{semialgebraic} of description complexity $(d,D,s)$, if the vertices of $G$ are points in $\mathbb{R}^d$, and edges are pairs of points that satisfy a Boolean combination of $s$ polynomial inequalities of degree at most $D$ in $2d$ variables. In \cite{FPSSZ} it is proved that if $G$ is a $K_{t,t}$-free semialgebraic graph of description complexity $(d,D,s)$, then the number of edges of $G$ is at most $O_{d,s,D,t}(n^{2-2/(d+1)+o(1)})$. Hence, the exponent of $n$ only depends on the dimension $d$, and it does not depend on $t$. Qualitatively, the same phenomenon holds in   a much more general setting. If $\cF$ is a hereditary family of graphs which does not contain every bipartite graph, then there exists $c=c(\cF)>0$ such that every $K_{t,t}$-free $n$ vertex graph in $\cF$ has at most $O_{\cF,t}(n^{2-c})$ edges. See \cite{BBCD,GH,HMST}, where \cite{HMST} focuses on finding the best possible $c$ for families $\cF$ defined by a forbidden induced bipartite graph $H$. In case $c$ can be chosen to be 1, that is, if every $K_{t,t}$-free member of $\cF$ has average degree $O_{\cF,t}(1)$, then the family $\cF$ is called \emph{degree-bounded}. Such families are of great interest in structural graph theory \cite{BBCD,GH,HMST,SSS} and combinatorial geometry \cite{CH23,FP08,KS}. An immediate corollary of Theorem \ref{thm:degree_bounded} is the following.

 \begin{theorem}
 Let $\gamma>0$ and let $\cF$ be the family of graphs, whose adjacency matrix has $\gamma_2$-norm at most $\gamma$. Then $\cF$ is degree bounded.
 \end{theorem}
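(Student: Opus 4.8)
The plan is to deduce this immediately from Theorem~\ref{thm:degree_bounded}, applied to the adjacency matrix of the graph. Fix an integer $t\ge 2$ (for $t=1$ a $K_{1,1}$-free graph has no edges, so there is nothing to prove), and let $G$ be an arbitrary $K_{t,t}$-free graph on $n$ vertices lying in $\cF$; write $A\in\{0,1\}^{n\times n}$ for its adjacency matrix, so that $\gamma_2(A)\le\gamma$ by assumption. It suffices to show that $G$ has $O_{\gamma,t}(n)$ edges, equivalently that its average degree is $O_{\gamma,t}(1)$, since the resulting bound will be uniform over all $K_{t,t}$-free members of $\cF$ and over all $t$.

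The one point that needs an argument is the translation between the graph condition and the matrix condition: I claim that, viewed as an $n\times n$ Boolean matrix, $A$ contains no $t\times t$ all-ones submatrix. Indeed, if $A[S,T]$ were such a submatrix with $|S|=|T|=t$, then $S$ and $T$ must be disjoint, since otherwise a vertex $v\in S\cap T$ would force the diagonal entry $A(v,v)$, which is $0$, to appear among the all-ones entries. But a pair of disjoint $t$-sets $S,T$ with all edges present between them is exactly a copy of $K_{t,t}$ in $G$, contradicting $K_{t,t}$-freeness.

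Given this, I would apply Theorem~\ref{thm:degree_bounded} with $M=A$ and $m=n$: since $\gamma_2(A)\le\gamma$ and $A$ has no $t\times t$ all-ones submatrix, $A$ has at most $d(\gamma,t)(n+n)=2d(\gamma,t)\,n$ one entries. Since the number of one entries of $A$ equals $2|E(G)|$, this yields $|E(G)|\le d(\gamma,t)\,n$, so the average degree of $G$ is at most $2d(\gamma,t)=O_{\gamma,t}(1)$, completing the argument.

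I do not expect any real obstacle here: all the substance is in Theorem~\ref{thm:degree_bounded}, and the reduction is routine. The only thing to be slightly careful about is the zero-diagonal observation above; an alternative would be to run Theorem~\ref{thm:degree_bounded} on the $2n\times 2n$ Boolean matrix having $A$ in both off-diagonal blocks and zeros on the diagonal blocks (whose $\gamma_2$-norm is at most $2\gamma$ by subadditivity of $\gamma_2$), but this still relies on the same diagonal fact and only costs extra constant factors, so the direct approach is cleaner.
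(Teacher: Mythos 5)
Your proof is correct and matches the intended argument: the paper states this as an immediate corollary of Theorem~\ref{thm:degree_bounded} without spelling out the details, and your deduction is exactly the natural one. The zero-diagonal observation showing that any $t\times t$ all-ones submatrix of the adjacency matrix must use disjoint row and column index sets (hence yields a genuine $K_{t,t}$) is indeed the one small point worth making explicit, and you handle it cleanly.
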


Finding the best possible exponent $c(\cF)$ for several geometrically defined graph families has also been a fruitful topic, see the recent survey of Smorodinsky \cite{Smor} for a detailed overview. For example, Keller and Smorodinsky \cite{KS} show that if $G$ is the incidence graph of $n$ points and $n$ pseudo-disks, and $G$ contains no $K_{t,t}$, then $G$ has at most $O_t(n)$ edges. Chan and Har-Peled \cite{CH23} proves the same result for incidence graphs of points and half-spaces in dimensions 2 and 3, and show that such results no longer holds for dimension $d\geq 5$. 

\begin{figure}
\begin{center}
\includegraphics[scale=0.3333]{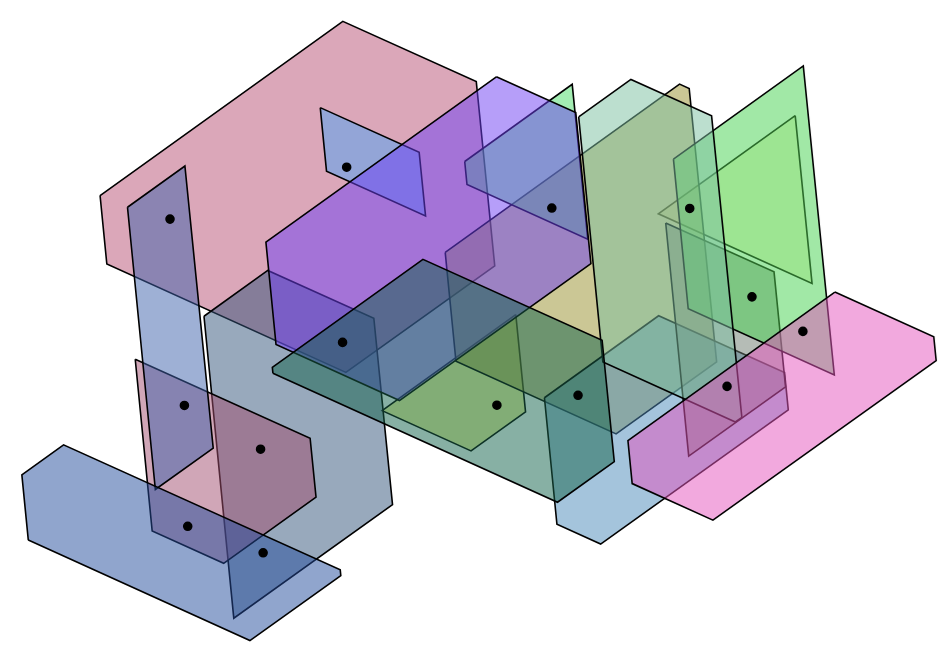}
\caption{A $K_{2,2}$-free configuration of points and elements of $\POL(\mathcal{H})$, where $\mathcal{H}$ contains 6 half-planes.}
\label{fig:polytope}
\end{center}
\end{figure}

In this paper, we study the following set of problems, first proposed by Basit, Chernikov, Starchenko, Tao and Tran \cite{BCSTT} and Tomon and Zakharov \cite{TZ}. In \cite{BCSTT}, it is proved that if $G$ is the incidence graph of $n$ points and $n$ axis-parallel boxes in $\mathbb{R}^d$, and $G$ is $K_{t,t}$-free, then it has at most $O_{d,t}(n(\log n)^{2d})$ edges. Subsequently, this bound was improved to $O_d(tn(\log/\log\log n)^{d-1})$ by Chan and Har-Peled \cite{CH23}, who also presented a matching lower bound construction (see also~\cite{T24}). 

More generally, in \cite{BCSTT} the following question is studied. Given a set $\mathcal{H}$ of $s$ half-spaces in $\mathbb{R}^d$, let $\POL(\mathcal{H})$ denote the set of polytopes that are intersections of translates of elements of $\mathcal{H}$, see Figure \ref{fig:polytope} for an illustration. In particular, if $P\in\POL(\mathcal{H})$, then  $\POL(\mathcal{H})$ contains all homothets of $P$ (but many other polytopes as well). In \cite{BCSTT}, it is proved that if $G$ is the $K_{t,t}$-free incidence graph of $n$ points and $n$ polytopes in $\POL(\mathcal{H})$, then $G$ has at most $O_{s,t}(n(\log n)^s)$ edges.  In \cite{CH23}, this is improved to $O_{s}(tn(\log n/\log\log n)^{\delta-1})$, where $\delta$ is the maximum size of a subset of $\mathcal{H}$ with no two half-spaces having parallel boundaries  (so $\delta\geq s/2$). The main result of this section greatly strengthens these results by showing that the exponent of the logarithm need not grow with $s$, it only depends on the dimension of the space.

\begin{theorem}\label{thm:zara1}
Let $\mathcal{H}$ be a set of $s$ half-spaces in $\mathbb{R}^d$, and let $Q$ be a set of $n$ polytopes, each of which is an intersection of translates of elements of $\mathcal{H}$. If $G$ is the incidence graph of a set of $n$ points and $Q$, and $G$ is $K_{t,t}$-free, then $G$ has at most $O_{s}(tn(\log n)^{O(d)})$ edges.
\end{theorem}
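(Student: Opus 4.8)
The plan is to reduce the incidence geometry problem to a statement about $\gamma_2$-norms, and then apply Theorem \ref{thm:degree_bounded}. Concretely, if $G$ is the incidence graph of a point set $X$ (with $|X| = n$) and the polytope family $Q \subseteq \POL(\mathcal{H})$ (with $|Q| = n$), then let $M$ be the $n \times n$ Boolean incidence matrix, with $M[x, P] = 1$ iff $x \in P$. Since $G$ is $K_{t,t}$-free, $M$ contains no $t \times t$ all-ones submatrix. By Theorem \ref{thm:degree_bounded}, it therefore suffices to show that $\gamma_2(M) = O_s((\log n)^{O(d)})$; then the number of edges of $G$, which is the number of one entries of $M$, is $O_{d,s,t}(n)$ — wait, but the theorem claims the bound $O_s(tn(\log n)^{O(d)})$, so in fact I only need a weaker conclusion. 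Let me instead aim to bound $\gamma_2(M)$ directly: once $\gamma_2(M) \le \gamma = (\log n)^{O(d)}$, Theorem \ref{thm:degree_bounded} gives that the number of one entries is at most $d(\gamma, t)(n + n)$, and tracking the dependence of $d(\gamma,t)$ on $\gamma$ (to be made explicit via Theorem \ref{thm:main} and the Concluding remarks) yields the stated $O_s(tn(\log n)^{O(d)})$ bound. So the crux is an upper bound on $\gamma_2(M)$.

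The key geometric input is that membership of a point in a homothet (or more generally an intersection of translates of a bounded number of half-spaces) can be expressed through a bounded number of coordinate comparisons, which is exactly the kind of structure whose incidence matrix has small $\gamma_2$-norm. First I would recall the basic building block: the "half-plane" / threshold matrix $T$ with $T[i,j] = 1$ iff $x_i \le y_j$ for reals $x_i, y_j$ has $\gamma_2(T) = O(\log n)$ (this is the classical bound on the $\gamma_2$-norm of the "greater-than" matrix, via a dyadic decomposition of the order into $O(\log n)$ blocky matrices, giving in fact $\block(T) = O(\log n)$). For a single half-space $h \in \mathcal{H}$ with fixed normal direction, the incidence relation "$x$ lies in the translate of $h$ appearing in $P$" reduces, after projecting onto the normal direction of $h$, to such a threshold comparison between a real depending on $x$ and a real depending on $P$; hence the corresponding Boolean matrix has $\gamma_2$-norm $O(\log n)$. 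Since each $P \in Q$ is an intersection of translates of elements of $\mathcal{H}$, and there are $s$ possible directions, the full incidence matrix $M$ is the Hadamard (entrywise) product of at most $s$ such threshold-type matrices $M_1, \dots, M_s$ (one per direction, where $M_k[x,P]$ records whether $x$ is on the correct side of every translate of the $k$-th half-space used in $P$ — I would need a small observation that for a fixed direction the relevant constraint for a polytope is again a single threshold, taking the innermost translate). Using submultiplicativity of $\gamma_2$ under Hadamard products, $\gamma_2(M) \le \prod_{k=1}^s \gamma_2(M_k) = O((\log n)^s)$.

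The subtlety — and where the exponent $O(d)$ rather than $O(s)$ comes from — is that the naive Hadamard-product bound gives $(\log n)^s$, not $(\log n)^{O(d)}$, and $s$ can be much larger than $d$. To fix this I would exploit that $\mathcal{H}$ lives in $\mathbb{R}^d$: although there are $s$ half-spaces, the relevant information is captured by the \emph{directions} of their boundary hyperplanes, and more importantly by a bounded-complexity description of $\POL(\mathcal{H})$. The right move is to partition the point set and the polytope family according to which "cell" of a suitable arrangement (of dimension depending only on $d$) they fall into, so that within each cell the incidence relation becomes an intersection of only $O(d)$ threshold conditions — intuitively, near any point only $d$ of the facet directions can be "locally active," or one passes to a canonical $d$-dimensional parametrization of the homothets. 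This is the step I expect to be the main obstacle: making precise, in the generality of $\POL(\mathcal{H})$ (not just homothets of a single $P$), the claim that the incidence structure decomposes into $(\log n)^{O(d)}$ blocky-type pieces, controlling the number of pieces and checking that $\gamma_2$ of the whole is bounded by (number of pieces) times the per-piece bound via subadditivity. Once that localization is in place, combining it with the threshold-matrix bound and Theorems \ref{thm:main}/\ref{thm:degree_bounded} finishes the proof; I would also remark that the lower bound (sharpness up to the $O(\cdot)$'s) follows from the Chan–Har-Peled construction for axis-parallel boxes.
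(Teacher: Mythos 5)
There is a fundamental flaw in your plan: even granting $\gamma_2(M)\le \gamma=(\log n)^{O(d)}$, feeding this into Theorem~\ref{thm:degree_bounded} cannot produce the claimed bound. The quantitative form of Theorem~\ref{thm:degree_bounded} established in the paper is $d(\gamma,t)=t^{O(\gamma^8)}$, and the Concluding remarks even prove a lower bound $d(\gamma,t)=\Omega(2^{\gamma}t)$, so the dependence on $\gamma$ is necessarily at least exponential. With $\gamma=(\log n)^{O(d)}$ for $d\ge 2$, this gives $d(\gamma,t)\ge 2^{(\log n)^{O(d)}}$, which is superpolynomial in $n$ and dwarfs the target $t(\log n)^{O(d)}$. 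There is no way to ``track the dependence of $d(\gamma,t)$ on $\gamma$'' to rescue this; the route through Theorem~\ref{thm:degree_bounded} is a dead end for general $t$. (It does work for $t=2$ via Theorem~\ref{thm:main}, since $K_{2,2}$-free means four-cycle-free and then degeneracy is $O(\gamma_2^2)$ with no exponential loss.)

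The paper's actual argument avoids this by never invoking Theorem~\ref{thm:degree_bounded} here. Instead, the $\gamma_2$-bound $\gamma_2(M)=O_{d,s}((\log n)^{d})$ is taken directly from Nikolov's Theorem~\ref{thm:nikolov} (which already handles the reduction from $s$ to $d$ that you correctly flag as the main obstacle in your Hadamard-product sketch, but do not close), and is then used only to locate a dense $z\times z$ submatrix. The mechanism is the Schatten-norm inequality $\lVert M\rVert_4^4\ge \lVert M\rVert_2^6/(mn\,\gamma_2(M)^2)$ (Lemma~\ref{lemma:trace_bound}), which forces many four-cycles and hence, after the biregularization of Lemmas~\ref{lemma:biregular1}--\ref{lemma:biregular2}, a $z\times z$ submatrix of average degree $\Omega(z/\gamma_2(M)^2\log n)$. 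One then applies the elementary divide-and-conquer bound of Lemma~\ref{lemma:weak_upper}, whose $t$-dependence is linear, to that submatrix and derives a contradiction if the original average degree exceeds $Ct(\log n)^{4d+2}(\log\log n)^s$. In short: bounded $\gamma_2$-norm is used to extract a dense piece, and the counting is closed by a separate semilinear-specific argument, not by a general sparsity-from-$\gamma_2$ theorem. Your proposal is missing both the correct use of the $\gamma_2$-bound and a complete proof of that bound itself.
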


This theorem is sharp, even if we restrict $Q$ to be a family of translates of any fixed polytope $P$ with positive volume. Indeed, any incidence graph of points and boxes in $\mathbb{R}^{D}$ is an incidence graph of points and corners in $\mathbb{R}^{2D}$, where a \emph{corner} is a set of the form $C_t=\{x\in\mathbb{R}^{d}:\forall i, x(i)<t(i)\}$, $t\in\mathbb{R}^{d}$. But then, given a $K_{2,2}$-free configuration of $n$ points and $n$ boxes in $\mathbb{R}^{\lfloor d/2\rfloor}$ with $n(\log n)^{\lfloor d/2\rfloor -1-o(1)}$ incidences (which exists by the aforementioned construction of Chan and Har-Peled \cite{CH23}), we can transform it into a configuration of points and translates of $P$ with the same incidence graph.

The proof of Theorem \ref{thm:zara1} is based on studying the $\gamma_2$-norm of incidence matrices of points and polytopes in $\POL(H)$, and then using spectral methods to find large all-ones submatrices. This is vastly different from the approaches of \cite{BCSTT} and \cite{CH23}, and to the best of our knowledge, it is the first proof in the area that relies on linear algebraic techniques.

Theorem \ref{thm:zara1} is a special subcase of the following result about \emph{semilinear graphs}. Semilinear graphs form the subfamily of semialgebraic graphs, in which the defining polynomials are linear functions (so $D=1$ in the above definition). In \cite{BCSTT}, it is shown that if $G$ is an $n$ vertex semilinear graph of description complexity $(s,u)$, and $G$ is $K_{t,t}$-free, then it has at most $O_{t,s,u}(n(\log n)^{s})$ edges. We refer the reader to Section \ref{sect:zara} for formal definitions.  We show that, analogously to the case of semialgebraic graphs, the order of the function can be bounded by the dimension instead of the complexity.

\begin{theorem}\label{thm:zara2}
Let $G$ be an $n$ vertex graph, whose vertices are points in $\mathbb{R}^d$, and a pair of points form an edge if they satisfy a Boolean combination of $s$ linear inequalities in $2d$ variables. If $G$ contains no $K_{t,t}$, then $G$ has at most $O_{d,s}(tn(\log n)^{O(d)})$ edges.
\end{theorem}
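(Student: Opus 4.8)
The plan is to reduce to $O_s(1)$ ``canonical'' bipartite incidence structures, bound the $\gamma_2$-norm of the associated incidence matrix by $(\log n)^{O(d)}$, and finally convert this into the edge bound using the results above.

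\emph{Reduction.} Whether a pair $\{x,y\}$, $x,y\in\mathbb{R}^d$, forms an edge of $G$ is governed by the sign pattern of $s$ affine forms, each of which splits as $\alpha_i(x)+\beta_i(y)\le c_i$ with $\alpha_i,\beta_i$ linear functionals on $\mathbb{R}^d$. After standard reductions — passing to a bipartite subgraph keeping at least half the edges, fixing an orientation of the edges, and splitting the edge set according to the $2^s$ possible sign patterns, which costs only a factor $O_s(1)$ and preserves $K_{t,t}$-freeness — it suffices to bound the edges of a bipartite graph with parts $P,Q$ (each of size $\le n$) in which $p\sim q$ iff $\alpha_i(p)+\beta_i(q)\le c_i$ for all $i\in[s]$. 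Put $\Phi(p)=(\alpha_i(p))_{i\in[s]}\in\mathbb{R}^s$ and $\Psi(q)=(\beta_i(q))_{i\in[s]}$; the images of $\Phi$ and $\Psi$ span affine subspaces of $\mathbb{R}^s$ of dimension at most $d$, so restricting the $s$ coordinate functionals to $W:=\mathrm{span}\big(\mathrm{im}\,\Phi\cup\mathrm{im}\,\Psi\cup\{c\}\big)$, a space of dimension $d'\le 2d+1$, the relation becomes ``$\Phi(p)\in P_q$'' where $P_q=\{w\in W:\langle n_i,w\rangle\le\theta_i(q),\ i\in[s]\}$ is a polytope in $\mathbb{R}^{d'}$ whose facet normals $n_1,\dots,n_s\in W$ are \emph{fixed} and whose offsets $\theta_i(q)$ depend affinely on $q$. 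In other words, the bi-adjacency matrix $M$ is the incidence matrix of $n$ points and $n$ polytopes in $\mathbb{R}^{O(d)}$ sharing a common set of $s$ facet directions — the situation of Theorem~\ref{thm:zara1}.

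\emph{Bounding $\gamma_2(M)$ (the crux).} Writing $P_q$ as an intersection of $s$ halfspaces gives $M=M_1\circ\cdots\circ M_s$ (entrywise product), where $M_i[p,q]=\mathbf{1}[\langle n_i,\Phi(p)\rangle\le\theta_i(q)]$ is a threshold matrix and hence, up to row and column permutations, a submatrix of a Greater-Than matrix, so $\gamma_2(M_i)=O(\log n)$; submultiplicativity of $\gamma_2$ under entrywise products then gives only the trivial bound $\gamma_2(M)\le O(\log n)^s$. The goal is to replace the exponent $s$ by $O(d)$. The idea is to pass to a \emph{vertical decomposition} of the arrangement of the $s$ facet-hyperplanes of $P_q$: this writes each $P_q$ as a disjoint union of $O_{s,d}(1)$ cells, each an intersection of at most $2d'$ halfspaces whose normals lie in a fixed finite set $\mathcal{N}$ — depending only on $n_1,\dots,n_s$ and $d'$, through intersections and projections, and \emph{not} on $q$ — and whose offsets depend affinely on $q$; moreover the combinatorial type of the decomposition takes only $O_{s,d}(1)$ values over all $q$. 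Partition the columns of $M$ into the $O_{s,d}(1)$ classes of constant combinatorial type. Within a class, $M$ is a sum of $O_{s,d}(1)$ matrices, one per cell, and each such matrix is an entrywise product of at most $2d'$ threshold matrices, hence has $\gamma_2\le O(\log n)^{2d'}=(\log n)^{O(d)}$. By subadditivity of $\gamma_2$ (the cells tile $P_q$), and then summing over the classes, $\gamma_2(M)\le O_{s,d}\big((\log n)^{O(d)}\big)$. The base case $d'=1$, where $P_q$ is merely an interval and $\gamma_2(M)=O(\log^2 n)$ irrespective of $s$, already explains why $s$ need not appear in the exponent. I expect this geometric bookkeeping — ensuring the pool of normals $\mathcal{N}$ is finite and polytope-independent, the number of cells and of combinatorial types is $O_{s,d}(1)$, and degeneracies can be removed by perturbation — to be the main obstacle.

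\emph{From small $\gamma_2$ to few edges.} It remains to convert a bound on $\gamma_2$ into an edge bound. Tracing the proof of Theorem~\ref{thm:degree_bounded} through Theorem~\ref{thm:main} — where a four-cycle-free matrix of degeneracy $d$ has $\gamma_2=\Theta(\sqrt d)$, a polynomial relationship between the two quantities — one gets the quantitative statement that a $K_{t,t}$-free Boolean matrix $M$ with $\gamma_2(M)\le\gamma$ has at most $\mathrm{poly}(\gamma)\cdot t\cdot(m+n)$ one-entries; equivalently, any Boolean matrix with $\gamma_2\le\gamma$ and more than $\mathrm{poly}(\gamma)\cdot t\cdot(m+n)$ ones contains a $t\times t$ all-ones submatrix (this is the ``find a large all-ones submatrix by spectral methods'' step). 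Applying this with $\gamma=O_{s,d}((\log n)^{O(d)})$ from the previous paragraph shows that each canonical bipartite structure has $O_{s,d}(tn(\log n)^{O(d)})$ edges, and summing over the $O_s(1)$ structures from the reduction completes the proof.
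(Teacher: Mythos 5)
Your $\gamma_2$ bound (the ``crux'' paragraph) is in the right spirit: the paper also reduces to polytopes with facet normals from a fixed pool and bounds $\gamma_2(M)=O_{d,s}((\log n)^{O(d)})$, although it does so by simply invoking a black-box result of Nikolov (Theorem~\ref{thm:nikolov}) on the $\gamma_2$-norm of incidence matrices of points and $\POL(\mathcal{H})$, rather than redoing the vertical-decomposition bookkeeping; your sketch of that bookkeeping is plausible but, as you note yourself, not complete.

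The real gap is the final step, ``from small $\gamma_2$ to few edges.'' You claim that ``tracing the proof of Theorem~\ref{thm:degree_bounded} through Theorem~\ref{thm:main} one gets the quantitative statement that a $K_{t,t}$-free Boolean matrix $M$ with $\gamma_2(M)\le\gamma$ has at most $\mathrm{poly}(\gamma)\cdot t\cdot(m+n)$ one-entries.'' This is false, and moreover cannot be repaired. The paper's proof of Theorem~\ref{thm:degree_bounded} does not give any such bound: it passes through the Gir\~ao--Hunter theorem (Theorem~\ref{thm:GH}), whose dependence is $t^{5000 k^4}$, so the resulting quantitative bound is $d(\gamma,t)=t^{O(\gamma^8)}$. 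Plugging $\gamma=(\log n)^{O(d)}$ into that gives $t^{(\log n)^{O(d)}}$, which is far worse than the claimed $tn(\log n)^{O(d)}$. Worse, the Concluding Remarks of the paper construct, for every $\gamma>4$, a Boolean matrix with $\gamma_2\le\gamma$, $(1-o(1))n^2$ ones, and no $t\times t$ all-ones submatrix for $t=\Theta(2^{-\gamma}n)$; so $d(\gamma,t)=\Omega(2^\gamma t)$, and the statement ``$\gamma_2\le\gamma$ and more than $\mathrm{poly}(\gamma)\cdot t\cdot(m+n)$ ones forces a $t\times t$ all-ones submatrix'' is simply not true.

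What the paper does instead is a two-step ``boosting'' argument that sidesteps this obstacle. From the $\gamma_2$ bound it extracts, via the Schatten-norm inequality $\|M\|_4^4\ge \|M\|_2^6/(mn\gamma_2(M)^2)$ (Lemma~\ref{lemma:trace_bound}) together with a biregularization, a \emph{small dense submatrix}: a $z\times z$ submatrix with average degree $\ge\alpha z$, where $\alpha\approx 1/(\gamma_2(M)^2\log n)$ and $z=\alpha D$, with $D$ the average degree of $G$ (Lemma~\ref{lemma:dense_submatrix}). This is not a $K_{t,t}$, only a submatrix of linear density. Then it applies a separate divide-and-conquer Zarankiewicz bound $O_{s,u}(t(\log(z/t))^{s-1})$ for semilinear graphs (Lemma~\ref{lemma:weak_upper}) to that $z\times z$ submatrix; since $z$ is only $\mathrm{polylog}(n)$ times $t$, the $(\log(z/t))^{s-1}$ factor is only $(\log\log n)^{O(s)}$, and comparing with $\alpha z$ gives the contradiction. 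The combination of a spectral dense-submatrix extraction and a combinatorial bound that is only sharp near the threshold is the essential idea you are missing; a direct ``$\gamma_2$ small $\Rightarrow$ degree bounded with polynomial constants'' implication does not hold.
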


If polynomials of degree at least $2$ are also permitted, similar results no longer hold. Indeed, the incidence graph of $n$ points and $n$ lines in $\mathbb{R}^2$ is semialgebraic of description complexity $(2,2,1)$, it is $K_{2,2}$-free, and it can have as many as $\Omega(n^{4/3})$ edges \cite{SzT}.  


\subsection{Discrepancy theory}

The $\gamma_2$-norm has important applications in discrepancy theory as well. Let $M$ be an $m\times n$ matrix, then the \emph{discrepancy} (also referred to as combinatorial discrepancy) of $M$ is defined as
$$\disc(M)=\min_{x\in \{-1,1\}^n} ||Mx||_{\infty}.$$
Here, $||.||_{\infty}$ is the maximum absolute value of the entries. Moreover, the \emph{hereditary discrepancy} of $M$ is defined as $\herdisc(M)=\max_{N\subset M}  \disc(N),$ where the maximum is taken over all submatrices $N$ of $M$.  If $\cF$ is set system on a ground set $X$, then $\disc(\cF)=\disc(M)$ and $\herdisc(\cF)=\herdisc(M)$, where $M$ is the incidence matrix of $\cF$ (with rows representing the sets). In combinatorial terms, the discrepancy of $\cF$ is the minimal $k$ for which there is a red-blue coloring of the elements of $X$ such that the numbers of red and blue elements in each set of $\cF$ differ by at most $k$.

Combinatorial discrepancy theory has its roots in the study of irregularities of distributions, and became a highly active area of research since the 80's \cite{BC}. It also found profound applications in computer science, see the book of Chazelle \cite{Ch} as a general reference. A classical result in the area is the Beck-Fiala theorem, which states that if $\cF$ is a set system such that each element of $X$ appears in at most $d$ sets, then $\disc(\cF)=O(d)$. The discrepancy of geometrically defined set systems is also extensively studied. Given a set of points $X$ in $\mathbb{R}^d$ and a collection $\mathcal{C}$ of geometric objects, one typically studies the discrepancy of the system $\cF=\{X\cap C:C\in\mathcal{C}\}$. Instances of these include when $\mathcal{C}$ is a collection of axis-parallel boxes \cite{Larsen,MNT,Nikolov}, lines \cite{ChL}, half-spaces \cite{half_space,CMS}, Euclidean balls \cite{Alex}, certain polytopes \cite{Beck,Nikolov}.

The following general inequality of Matou\v{s}ek, Nikolov, and Talwar \cite{MNT} establishes a sharp relation between the $\gamma_2$-norm and the hereditary discrepancy of arbitrary matrices:
    $$\Omega\left(\frac{\gamma_2(M)}{\log m}\right)\leq \herdisc(M)=O(\gamma_2(M)\sqrt{\log m}).$$
Combining this theorem with Theorem \ref{thm:main} immediately gives that if $M$ is a four-cycle free Boolean matrix, then $\herdisc(M)$ and $\sqrt{\dgc(M)}$ are equal up to logarithmic factors. For example, if $M$ is the incidence matrix of $n$ points and $m$ lines in the plane, then $M$ is four cycle-free and the Szemer\'edi-Trotter theorem \cite{SzT} implies that $\dgc(M)=O(n^{1/3})$. This bound is also the best possible, so we get close to optimal bounds on the discrepancy of geometric set systems generated by lines, recovering the results of \cite{ChL}.

\subsection*{Paper organization}
 In the next section, we present the main definitions and notions used throughout our paper. Then, in Section \ref{sect:4cycle}, we prove Theorems \ref{thm:degree_bounded}, \ref{thm:main} and Corollary \ref{cor:equivalence}. We continue with the proof of Theorems \ref{thm:zara1} and \ref{thm:zara2} in Section \ref{sect:zara}.

\section{Preliminaries}

In this section, we introduce the basic notation used throughout this paper and present some simple results.

\subsection{Combinatorics of matrices}

Given a Boolean matrix $M\in \{0,1\}^{m\times n}$, it naturally corresponds to the bipartite graph $G$ with vertex classes $[m]$ and $[n]$, where there is an edge between $i\in [m]$ and $j\in [n]$ if and only if $M(i,j)=1$. The matrix $M$ is the \emph{bi-adjacency} matrix of $G$. We adapt certain graph theoretic notations to Boolean matrices, e.g. the average degree of a matrix $M$ is the average degree of $G$, and a matrix is four cycle-free if it contains no $2\times 2$ all-ones submatrix. 

\begin{definition}[Degeneracy]
    Given a graph $G$ and a nonnegative integer $d$, $G$ is \emph{$d$-degenerate} if every subgraph of $G$ has a vertex of degree at most $d$. The \emph{degeneracy} of $G$ is the smallest $d$ such that $G$ is $d$-degenerate, and it is denoted by $\dgc(G)$. If $M$ is a Boolean matrix and $G$ is the bipartite graph with bi-adjacency matrix $M$, we define $\dgc(M)=\dgc(G)$.
\end{definition}

\subsection{Linear algebra notation}
Let $M$ be an $m\times n$ real matrix. The \emph{Schatten $p$-norm} of $M$ is defined as
$$||M||_p=\left(\sum_{i=1}^{\min\{m,n\}}\sigma_i^p\right)^{1/p},$$
where $\sigma_1,\dots,\sigma_{\min\{m,n\}}$ are the singular values of $M$. The \emph{trace-norm} of $M$ is the Schatten 1-norm, that is, $||M||_{\tr}=||M||_1$. 

Next, we discuss some basic operations between matrices. Let $M\in \mathbb{R}^{m\times n}$ and $M'\in \mathbb{R}^{m'\times n'}$.
\begin{itemize}
    \item (direct sum) $M\oplus M'$ is the $(m+m')\times (n+n')$ matrix $N$ defined as $N(i,j)=M(i,j)$ if $(i,j)\in[m]\times [n]$, $N(i+m,j+n)=M'(i,j)$ if $(i,j)\in [m']\times [n']$, and $N(i,j)=0$ for all unspecified entries.
    \item (Kronecker product/direct product)  $M\otimes M'$ is the $(mm')\times (nn')$ matrix $N$ defined as $N((i,i'),(j,j'))=M(i,j)M'(i',j')$ for $(i,j)\in [m]\times [n]$ and $(i',j')\in [m']\times [n']$.
    \item (Hadamard product) if $m=m'$ and $n=n'$, then $M\circ M'$ is the $m\times n$ matrix $N$ defined as $N(i,j)=M(i,j)M'(i,j)$.
\end{itemize}

\subsection{The max-norm}

In this section, we collect some basic properties of the $\gamma_2$-norm. We refer the reader to \cite{LSS} as a general reference.

\begin{definition}[$\gamma_2$-norm]
    Let $M$ be an $m\times n$ real matrix. The \emph{$\gamma_2$-norm} (or \emph{max-norm}) of $M$ is defined as
$$\gamma_2(M)=\min_{UV=M} ||U||_{\row} ||V||_{\col},$$
where $||U||_{\row}=||U||_{2\rightarrow\infty}$ is the maximum $\ell_2$-norm of the row vectors of $U$, and $||V||_{\col}=||V||_{1\rightarrow 2}$ is the maximum $\ell_2$-norm of the column vectors of $V$.
\end{definition} 

\noindent
Let $M\in \mathbb{R}^{m\times n}$ and let $N$ be a real matrix.
\begin{enumerate}
    \item If $c\in \mathbb{R}$, then $\gamma_2(cM)=|c|\gamma_2(M)$.
    \item (monotonicity) If $N$ is a submatrix of $M$, then $\gamma_2(N)\leq \gamma_2(M)$.
    \item (subadditivity) If $M$ and $N$ have the same size, then $\gamma_2(M+N)\leq \gamma_2(M)+\gamma_2(N)$.
    \item  $\gamma_2(M)=\max||M\circ (u v^T)||_{\tr},$ where the maximum is over all unit vectors $u\in \mathbb{R}^m$, $v\in \mathbb{R}^{n}$.
    \item  $\gamma_2(M)\geq \frac{1}{\sqrt{mn}}||M||_{\tr}.$
    \item  $\gamma_2(M\otimes N)=\gamma_2(M)\gamma_2(N)$.
    \item Duplicating rows or columns of $M$ does not change the $\gamma_2$-norm.
    \item $\gamma_2(M)\leq \min\{||M||_{\row},||M||_{\col}\}$
    \item  $\gamma_2(M\oplus N)=\max\{\gamma_2(M),\gamma_2(N)\}$.
    \item If $M$ is Boolean, then $\gamma_2(M)\leq \sqrt{\mbox{rank}(M)}$.
\end{enumerate}

We note that 5. follows from 4. by taking $u$ and $v$ be the normalized all-ones vectors. Moreover, 8. follows by setting $(U,V)=(I,M)$ or $(U,V)=(M,I)$ in the definition of the $\gamma_2$-norm.

\subsection{Blocky matrices}
In this section, we collect basic properties of the $\block(.)$ function.

\begin{definition}[Blocky matrix]
     A \emph{blocky matrix} is a Boolean matrix $M$ whose rows and columns can be partitioned into sets $A_0,A_1,\dots,A_k$ and $B_0,B_1,\dots,B_k$ for some $k\geq 0$ such that $M[A_i\times B_i]$ is the all-ones matrix for $i=1,\dots,k$, and $M[A_i\times B_j]=0$ for $i\neq j$ and $i=j=0$. We refer to the submatrices $M[A_i\times B_i]$ and rectangles $A_i\times B_i$ for $i=1,\dots,k$ as \emph{blocks}. Finally, say that a blocky matrix is \emph{thin} is for every $i=1,\dots,k$, either $|A_i|=1$ or $|B_i|=1$.
\end{definition}

\begin{definition}
Let $M$ be an integer matrix, then $\block(M)$  is the minimum $k$ for which there exist $k$ blocky matrices $B_1,\dots,B_k$ and $\varepsilon_1,\dots,\varepsilon_k\in\{-1,1\}$ such that $M=\sum_{i=1}^k \varepsilon_i B_i$. 
\end{definition}

\noindent
Let $M$ and $N$ be an integer matrices.
\begin{enumerate}
    \item (monotonicity) If $N$ is a submatrix of $M$, then $\block(N)\leq \block(M)$.
    \item (subadditivity) If $M$ and $N$ have the same size, then $\block(M+N)\leq \block(M)+\block(N)$.
    \item $\gamma_2(M)\leq \block(M)$.
    \item $\block(M\otimes N)\leq \block(M)\block(N)$.
    \item Duplicating rows or columns of $M$ does not change $\block(M)$.
    \item $\block(M\oplus N)=\max\{\block(M),\block(N)\}$.
\end{enumerate}

\noindent
Here, 4. follows by noting that the Kronecker product of blocky matrices is also a blocky matrix.

\subsection{Basic results}

In this section, we collect a few elementary results about the degeneracy, $\gamma_2$-norm, and $\block(.)$.

\begin{claim}\label{claim:dgc}
If $M$ is a Boolean matrix, then $\gamma_2(M)\leq 2\sqrt{\dgc(M)}$. 
\end{claim}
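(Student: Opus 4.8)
The plan is to use the degeneracy ordering directly to build an explicit factorization $M = UV$. Since $M$ has degeneracy $d = \dgc(M)$, every submatrix has a row or column with at most $d$ ones; peeling these off one at a time gives a linear order $v_1, v_2, \dots, v_N$ on the rows and columns of $M$ (equivalently, on the vertices of the associated bipartite graph $G$) such that each $v_k$ has at most $d$ ones/neighbors among $\{v_1,\dots,v_{k-1}\}$, i.e. among the vertices \emph{preceding} it. I would orient each edge of $G$ from later to earlier in this order, so that every vertex has out-degree at most $d$.

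Next I would exhibit the factorization. For a row $i$ of $M$ and column $j$ of $M$ with $M(i,j)=1$, exactly one of $i,j$ comes later in the degeneracy order; assign the edge $ij$ to whichever endpoint is later. Define $U \in \mathbb{R}^{m \times E}$ and $V \in \mathbb{R}^{E \times n}$ indexed by the edge set $E$ of $G$: for an edge $e = ij$ assigned to (say) $i$, set $U(i,e) = V(e,j) = 1$, and put $U(i',e)=1$ for the other endpoint as well if it is a row — more cleanly, one should set up $U$ to have a $1$ in row $i$ at column $e$ whenever $e$ is incident to row $i$, and $V$ to have a $1$ in column $j$ at row $e$ whenever $e$ is incident to column $j$ and $e$ is assigned to $j$ (and symmetrically); then $(UV)(i,j) = \sum_e U(i,e)V(e,j)$ counts the edge $ij$ exactly once, so $UV = M$. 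The point of the assignment-by-later-endpoint is that each column of $U$ has a bounded number of nonzeros and, crucially, each row of $U$ and each column of $V$ has $\ell_2$-norm at most $\sqrt{d+1}$: a row $i$ of $M$ is incident to at most $d$ edges assigned to $i$ (out-edges, bounded by out-degree $\le d$) plus we must also account for edges incident to $i$ but assigned to a column $j$ after $i$. This is where care is needed, so let me reorganize.

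The cleaner approach: orient edges from later to earlier, giving out-degree $\le d$ at every vertex. Index the "middle" dimension of the factorization by the vertices $V(G)$ rather than the edges. Define $U$ (rows $= [m]$, columns $= V(G)$) by $U(i, w) = 1$ iff $w = i$ or $iw$ is an edge oriented $i \to w$ (so row $i$ has at most $1 + d$ ones), and define $V$ (rows $= V(G)$, columns $=[n]$) by $V(w, j) = 1$ iff $w = j$ or $wj$ is an edge oriented $j \to w$. Then $(UV)(i,j) = \sum_w U(i,w) V(w,j)$: the term $w=i$ contributes $V(i,j) = \mathbf{1}[ij$ oriented $j\to i]$, the term $w = j$ contributes $U(i,j) = \mathbf{1}[ij$ oriented $i \to j]$, and any other $w$ contributes $0$ because it would need both $iw$ oriented $i\to w$ and $wj$ oriented $j \to w$, forcing $w$ before both $i$ and $j$ while also... actually one must check no double-counting and no spurious terms — here the four-cycle-freeness is \emph{not} needed, just the orientation. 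Since exactly one of the two orientations of edge $ij$ holds, $(UV)(i,j) = M(i,j)$. Every row of $U$ has at most $d+1$ ones so $\|U\|_{\row} \le \sqrt{d+1}$, and likewise $\|V\|_{\col} \le \sqrt{d+1}$ (each column $j$ of $V$ has at most $d+1$ ones: the entry $w=j$, plus the $\le d$ out-neighbors of $j$). Therefore $\gamma_2(M) \le \|U\|_{\row}\|V\|_{\col} \le d+1 \le 2\sqrt{d}$ — wait, that gives $d+1$, not $2\sqrt d$.

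So the naive edge/vertex construction only yields $\gamma_2(M) \le \dgc(M)+1$, which is too weak; the square-root saving must come from a smarter decomposition. The right idea is to split the degeneracy order into dyadic blocks and handle "short-range" and "long-range" edges separately, or equivalently to use the identity-plus-strictly-triangular structure: writing $M$ relative to the degeneracy order, one gets $M = L + U^{\top}$-type pieces each of which, after the orientation argument, is a $\{0,1\}$ matrix with at most $d$ ones per row \emph{or} per column but not necessarily both. The honest proof of Claim~\ref{claim:dgc} is surely the short one: take the degeneracy order, let $A$ be the part of $M$ supported on edges going "forward" and $B$ the part "backward"; $A$ has all row sums $\le d$ hence $\gamma_2(A) \le \|A\|_{\row} \le \sqrt d$ by property 8, and $B$ has all column sums $\le d$ hence $\gamma_2(B) \le \|B\|_{\col} \le \sqrt d$; by subadditivity $\gamma_2(M) = \gamma_2(A+B) \le \gamma_2(A) + \gamma_2(B) \le 2\sqrt d$. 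That is the whole proof. The main (and only) subtlety is setting up the splitting so that $M = A + B$ with $A,B$ Boolean, $A$ having bounded row sums and $B$ bounded column sums — which is exactly what orienting each edge toward its later endpoint in the degeneracy order achieves: $A(i,j) = 1$ iff $ij \in E$ and $j$ precedes $i$ (row $i$ then has $\le d$ ones), $B(i,j) = 1$ iff $ij\in E$ and $i$ precedes $j$ (column $j$ then has $\le d$ ones). I expect no real obstacle here; one just has to remember to invoke property 8 ($\gamma_2 \le \min\{\|\cdot\|_{\row}, \|\cdot\|_{\col}\}$) and subadditivity, and note the bound does not even require four-cycle-freeness.
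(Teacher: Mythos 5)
Your final argument is correct and is essentially the paper's proof: take a degeneracy ordering, split $M = M_1 + M_2$ into the ``forward'' and ``backward'' parts so that $M_1$ has row sums $\le d$ and $M_2$ has column sums $\le d$, then apply property~8 and subadditivity to get $\gamma_2(M) \le 2\sqrt d$. The initial attempt to build an explicit $UV$ factorization indexed by edges or vertices is a detour that, as you noticed, only yields $d+1$; the two-piece decomposition you settle on is the right argument.
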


\begin{proof}
Let $d=\dgc(M)$, let $G$ be the underlying bipartite graph with vertex classes $A$ and $B$ (corresponding to rows and columns of $M$, respectively). Then there exists an ordering $<$ of $V(G)=A\cup B$ such that every $v\in V(G)$ has at most $d$ neighbours $<$-larger than $v$. Let $G_1$ be the subgraph of $G$ in which we keep those edge, whose $<$-smaller element is in $A$, and let $G_2$ be the rest of the edges. If $M_i$ is the bi-adjacency matrix of $G_i$ for $i=1,2$, then $M=M_1+M_2$, every row of $M_1$ has at most $d$ one entries, and every column of $M_2$ has at most $d$ one entries. Hence, by property 8., $\gamma_2(M_1)\leq ||M_1||_{\row}\leq\sqrt{d}$ and $\gamma_2(M_2)\leq ||M_2||_{\col}\leq\sqrt{d}$. Finally, by subadditivity, $\gamma_2(M)\leq \gamma_2(M_1)+\gamma_2(M_2)\leq 2\sqrt{d}$.
\end{proof}

\begin{claim}\label{claim:blocky}
   Let $M$ be a Boolean matrix, and let $k$ be the minimum number of thin blocky matrices, whose sum is $M$. Then
   $$\frac{\dgc(M)}{2}\leq k\leq 2\dgc(M).$$
\end{claim}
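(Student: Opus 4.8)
The plan is to prove the two inequalities in Claim~\ref{claim:blocky} essentially independently, since thin blocky matrices are almost the same object as star forests, and degeneracy of a bipartite graph is tightly linked to decompositions into star forests.

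\textbf{Upper bound $k \le 2\dgc(M)$.} Let $d = \dgc(M)$ and let $G$ be the associated bipartite graph with parts $A$ (rows) and $B$ (columns). As in Claim~\ref{claim:dgc}, fix a degeneracy ordering $<$ of $A \cup B$ in which every vertex has at most $d$ neighbours that are $<$-larger, and split the edges into $G_1$ (edges whose $<$-smaller endpoint lies in $A$) and $G_2$ (the rest). In $G_1$, every vertex of $A$ sees at most $d$ of its $G_1$-neighbours as ``forward'' neighbours; more precisely, direct each edge of $G_1$ from its $<$-smaller ($A$-side) endpoint to its $<$-larger ($B$-side) endpoint, so every vertex of $A$ has out-degree $\le d$ and every vertex of $B$ has in-degree $0$ — that is, $G_1$ decomposes into at most $d$ star forests centered at $A$-vertices. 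Each such star forest has bi-adjacency matrix a thin blocky matrix (rows of size $1$, or trivially empty blocks), so $G_1$ contributes at most $d$ thin blocky matrices. Symmetrically $G_2$ contributes at most $d$, giving $M$ as a sum of at most $2d$ thin blocky matrices. (One should be slightly careful that the blocks of a single star forest's matrix are genuinely of the ``one row'' type and that distinct stars go into distinct blocks — this is immediate since stars are vertex-disjoint.)

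\textbf{Lower bound $k \ge \dgc(M)/2$.} Suppose $M = N_1 + \dots + N_k$ with each $N_i$ a thin blocky matrix, and note the sum is over Boolean matrices, so the supports of the $N_i$ partition the edge set of $G$. Each $N_i$ is the bi-adjacency matrix of a graph $H_i$ which is a disjoint union of stars (each block being a star centered on whichever side has a single vertex), hence $H_i$ has maximum degree arbitrary but is a star forest, so in particular every subgraph of $H_i$ has a vertex of degree $\le 1$: $H_i$ is $1$-degenerate. Degeneracy is subadditive over edge-decompositions — if $G$ is the edge-union of $k$ graphs each of degeneracy $\le 1$, then $\dgc(G) \le k$: indeed any subgraph $G'$ of $G$ inherits a decomposition into $k$ star forests, so $2|E(G')| = \sum_v \deg_{G'}(v)$ while also $|E(G')| \le \sum_i |E(H_i \cap G')| \le \sum_i |V(G')| = k|V(G')|$ using that a star forest on $|V(G')|$ vertices has at most $|V(G')|$ edges; hence the average degree of $G'$ is at most $2k$, so $G'$ has a vertex of degree $\le 2k$, giving $\dgc(G) \le 2k$, i.e. $k \ge \dgc(M)/2$.

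\textbf{Main obstacle.} The only real subtlety is the lower bound: one must not confuse ``$1$-degenerate'' with ``maximum degree $1$'' (a star can have huge degree at its center), and the bound on edges of a star forest must be used correctly — a star forest on $N$ vertices has at most $N-1$ edges, and the key point is that this forces small \emph{average} degree of any subgraph, not small maximum degree, which is exactly what degeneracy needs. The factor $2$ in both directions is genuine and comes from the two-sided splitting (forward edges into $A$ vs.\ into $B$) on the upper side and from the average-degree-to-degeneracy passage on the lower side. Everything else is bookkeeping: translating between ``thin blocky matrix,'' ``star forest,'' and ``orientation with bounded out-degree on one side.''
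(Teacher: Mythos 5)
Your approach matches the paper's: split $M$ via a degeneracy ordering into $M_1$ and $M_2$, decompose each into $d$ thin blocky matrices, and bound the edge count of an arbitrary subgraph for the lower bound. The lower bound is fine as proved (though the preliminary assertion ``$\dgc(G)\le k$'' for an edge-union of $k$ forests is actually false --- $K_{3,3}$ is the union of two forests and has degeneracy $3$ --- what you actually establish, $\dgc(G)\le 2k$, is correct and is all you need for $k\ge \dgc(M)/2$).

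The upper bound, however, contains a real error: you assert that $G_1$ ``decomposes into at most $d$ star forests centered at $A$-vertices,'' but this decomposition does not exist in general. Orienting the edges of $G_1$ from $A$ to $B$ and colouring the out-edges at each $A$-vertex with colours $1,\dots,d$ produces colour classes in which every $A$-vertex has degree at most $1$, while a $B$-vertex may still meet many edges of a single colour --- so each colour class is a star forest centered at $B$, whose blocks are of the one-\emph{column} type, not one-row. Concretely, take $M$ to be the all-ones $n\times 1$ matrix: then $\dgc(M)=1$, $G_1=G$ is a single star centred at the lone column vertex, and a star forest centred at $A$ can use that $B$-vertex in at most one star, hence at most one edge, so no decomposition into one such forest exists. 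Your statement that every vertex of $B$ has in-degree $0$ (it should be out-degree $0$) and the parenthetical insisting the blocks are ``genuinely of the `one row' type'' are further symptoms of the same swap. The count $2d$ is right and the repair is immediate --- write ``centered at $B$'' for the $G_1$-pieces and ``centered at $A$'' for the $G_2$-pieces, which is precisely what the paper's $\ell$-th-one-entry construction does; since the definition of thin allows blocks with one row or one column, nothing downstream changes.
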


\begin{proof}
We first prove the upper bound. Let $d=\dgc(M)$, then by the previous proof, we can  write $M=M_1+M_2$, where every row of $M_1$ has at most $d$ one entries, and every column of $M_2$ has at most $d$ one entries. For $\ell=1,\dots,d$, let $B_{1,\ell}$ be the matrix, where $B_{1,\ell}(i,j)=1$ if $M_1(i,j)=1$ and $M_1(i,j)$ is the $\ell$-th one entry in the $i$-th row of $M_1$, otherwise let $B_{1,\ell}=0$. Then $M_1=B_{1,1}+\dots+B_{1,d}$ and $B_{1,\ell}$ is a thin blocky matrix. We define similarly the matrices $B_{2,1},\dots,B_{2,d}$ with respect to the columns of $M_2$. But then 
$M=\sum_{\ell=1}^d (B_{1,\ell}+B_{2,\ell})$, finishing the proof.

Now let us turn to the lower bound.  Let $B_1,\dots,B_k$ be thin blocky matrices, whose sum is $M$. Let $X$ be a set of rows, $Y$ be a set of columns. Note that $B_i[X\times Y]$ contains at most $|X|+|Y|-1$ one entries, hence $M[X\times Y]$ contains at most $k(|X|+|Y|-1)$ entries. Thus, assuming that $|X|\leq |Y|$, there is a column containing at most $k(|X|+|Y|-1)/|Y|<2k$ one entries. As this holds for every submatrix of $M$, we conclude that $\dgc(M)<2k$.
\end{proof}



\section{Sparse matrices}\label{sect:4cycle}

In this section, we prove Theorems \ref{thm:degree_bounded}, \ref{thm:main} and Corollary \ref{cor:equivalence}. Most of this section is devoted to proving that four cycle-free matrices of average degree $d$ have $\gamma_2$-norm at least $\Omega(\sqrt{d})$. From this, Theorem \ref{thm:main} follows after a bit of work. Then, we show that Theorem \ref{thm:main} implies Theorem \ref{thm:degree_bounded}.

Let $M$ be a matrix of average degree at least $d$. The first step is to find a submatrix of average degree $\Omega(d)$ where either each row or each column contains $\Theta(d')$ entries for some $d'=\Omega(d)$. Unfortunately, it is a well known result of graph theory \cite{JS} that it is not always possible to find a submatrix in which this is true for both the rows and columns simultaneously, which would also make our proof significantly simpler.

\begin{lemma}\label{lemma:regularize}
Let $M$ be a Boolean matrix of average degree at least $d$. Then $M$ contains a submatrix $N$ of average degree $d'\geq d/3$ such that every row and column of $N$ contains at least $d'/2$ one entries, and either $||N||_{\row}^2\leq 6d'$ or $||N||_{\col}^2\leq 6d'$.
\end{lemma}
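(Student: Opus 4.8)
The goal is to pass from a matrix of average degree $\geq d$ to a submatrix that is "almost regular" on at least one side, while keeping the average degree comparable. The standard tool is a greedy deletion argument: repeatedly delete any row or column of low degree. I would run this process in two stages. First, pass to a submatrix $M'$ of minimum degree $\geq d/3$ (say). Concretely, iteratively delete any row with fewer than $d/3$ ones or any column with fewer than $d/3$ ones; a counting argument shows this cannot delete all the ones, since the total number of ones deleted is at most $(m+n)\cdot d/3$, whereas the original matrix has at least $(m+n)d/2$ ones (the average degree being over $m+n$ vertices), so a nonempty submatrix survives, and in it every row and column has at least $d/3$ ones, hence its average degree $d'$ is also $\geq d/3$.

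Next I need to control the \emph{maximum} degree on one side. The obstacle, as the authors note via \cite{JS}, is that one cannot in general regularize both sides at once. But one side suffices: I would use a dyadic pigeonhole on, say, the row degrees. In $M'$, let $D$ be the maximum row degree; partition the rows into $O(\log n)$ buckets according to whether the row degree lies in $[2^{i}, 2^{i+1})$. One of these buckets accounts for a $\Omega(1/\log n)$ fraction of the ones — but a $\log$ loss is not allowed here, since the statement asks for $d' \geq d/3$ with no logarithmic loss. So a plain dyadic argument is too lossy.

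The right move is instead: in the min-degree-$\geq d'$ submatrix, either $\|M'\|_{\row}^2 = \max_i (\text{row deg})$ is already $\leq 6d'$ — in which case we are done with $N=M'$ — or some row has degree $> 6d'$. In the latter case I would switch viewpoints and argue on columns: sum of row degrees equals sum of column degrees equals (number of ones) $\geq (m'+n')d'/2$ where $m',n'$ are the surviving dimensions. If many rows have large degree the column side must absorb them; more carefully, I would discard the rows of degree exceeding $6d'$. The number of such rows is at most (total ones)$/(6d')$, and one checks this removes at most a small constant fraction of the ones (since total ones is, up to constants, the number of rows times the average row degree, and average row degree is at most... ) — here I need a bound on the average row degree, which is where I would invoke that in a min-degree submatrix one can further pass to a sub-submatrix where the average row degree is within a constant factor of the minimum, OR simply iterate the deletion. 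The cleanest route: apply the min-degree-reduction a second time to the transpose restricted to rows of bounded degree. After removing high-degree rows, re-clean columns that dropped below $d'/2$; a careful bookkeeping of how many ones are lost at each step (high-degree row removal loses $\leq \frac16$ of ones, re-cleaning columns loses another controlled fraction) keeps the average degree $\geq d/3$, and now every row has degree in $[d'/2, 6d']$ so $\|N\|_{\row}^2 \leq 6d'$. The symmetric case (clean rows, if instead the column maximum blows up) gives $\|N\|_{\col}^2 \leq 6d'$.

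**Main obstacle.** The delicate point is the accounting: ensuring that after (i) reducing to min-degree $\geq d/3$, (ii) deleting rows of degree $> 6d'$, and (iii) re-deleting newly-low-degree columns, the surviving submatrix is nonempty and still has average degree $\geq d/3$ with the min-degree bound $\geq d'/2$ restored. This requires choosing the constants ($d/3$, $d'/2$, $6d'$) so that the fractions of ones lost at steps (ii) and (iii) sum to less than the slack built in at step (i). I expect the argument to hinge on the inequality that in any submatrix whose rows all have degree $\geq d'/2$, the number of rows is at most (ones)$/(d'/2)$, so rows of degree $> 6d'$ number at most (ones)$/(6d')$ and carry at most $\frac{1}{3}$ of the ones — no, at most ... one must be careful, and I would likely need a preliminary reduction to a submatrix where the \emph{average} row degree (not just the minimum) is $\Theta(d')$, which itself follows from one more round of greedy deletion. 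This interplay of three cleanups is the technical heart; everything else is routine counting and two invocations of property 8 of the $\gamma_2$-norm at the point where the lemma is later applied.
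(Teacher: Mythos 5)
Your plan is in the right spirit but has two genuine gaps, both of which the paper's proof resolves with a single clean trick that you do not use: passing to an \emph{induced subgraph of maximum average degree}.

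First, your opening cleanup --- iteratively deleting rows or columns of degree below $d/3$ --- produces a submatrix $M'$ of minimum degree $\geq d/3$, but the lemma requires minimum degree $\geq d'/2$ where $d'$ is the average degree of the \emph{final} submatrix $N$. Your $M'$ could well have average degree much larger than $2d/3$, in which case its minimum degree $d/3$ is below the required threshold $d'/2$. A fixed threshold tied to the starting quantity $d$ cannot relate the minimum degree to the eventual average degree. The paper instead passes to the induced subgraph $G_0$ of maximum average degree, say $d_0\geq d$. By maximality, removing any vertex cannot increase the average degree, which forces every vertex of $G_0$ to have degree at least $d_0/2$: this pins the minimum degree to half the \emph{current} average degree, exactly what the lemma needs, with no slack to account for.

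Second, your step of "discard rows of degree exceeding $6d'$" does not come with a working bound on the fraction of ones lost. If the matrix has very few rows compared to columns, the average \emph{row} degree can be far larger than $d'$, and a large constant fraction of the ones can live in rows of degree above $6d'$. You flag this ("here I need a bound on the average row degree") but do not resolve it. The paper's resolution is to prune high-degree vertices from the \emph{larger} vertex class $A_0$ (WLOG $|A_0|\geq |B_0|$): there the total degree $d_0(|A_0|+|B_0|)\leq 2d_0|A_0|$ shows at most half the vertices of $A_0$ exceed degree $2d_0$, and removing them still leaves at least $d_0|A_1|/2 \geq d_0(|A_1|+|B_0|)/6$ edges, because each surviving $A_1$-vertex retains all its $\geq d_0/2$ incident edges. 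This is where the constants $d/3$ and $6d'$ come from.

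Finally, your concern about a cascade of re-cleanings (remove high-degree rows, then columns drop below threshold, then...) is real under your scheme, but the paper sidesteps it entirely: after pruning $C$, it again takes an induced subgraph of maximum average degree inside $G_1$, which in one shot restores the property "minimum degree $\geq$ (its own) average degree $/2$" while the upper bound $\leq 2d_0 \leq 6d'$ on $A$-side degrees persists because no vertices are ever added back. Without the maximum-average-degree device, the bookkeeping you gesture at in your final paragraph does not close.
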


\begin{proof}
Let $G$ be the bipartite graph, whose bi-adjacency matrix is $M$, and let $A$ and $B$ be the vertex classes of $G$. Our task is to show that $G$ contains an induced subgraph $G'$ of average degree  $d'\geq d/3$ such that $G'$ has minimum degree at least $d'/2$, and every degree in one of the parts is at most $6d'$. 

Let $G_0$ be an induced subgraph of $G$ of maximum average degree, let $d_0$ be the average degree of $G_0$, then $d_0\geq d$. First, we note that $G_0$ has no vertex of degree less than $d_0/2$. Indeed, otherwise, if $v\in V(G_0)$ is such a vertex, then the average degree of $G_0-v$ (i.e., the graph we get by removing $v$) has average degree $2e(G_0-v)/(v(G_0)-1)>(2e(G_0)-d_0)/(v(G_0)-1)=d_0$.

Let $A_0\subset A,B_0\subset B$ be the vertex classes of $G_0$, and assume without loss of generality that $|A_0|\geq |B_0|$. Note that the number of edges of $G_0$ is $\frac{d_0}{2}(|A_0|+|B_0|)$. Let $C\subset A_0$ be the set of vertices of degree more than $2d_0$, then $|C|\leq |A_0|/2$. Indeed, otherwise, the number of edges of $G_0$ is at least $2d_0|C|>d_0|A_0|\geq\frac{d_0}{2}(|A_0|+|B_0|)$, contradiction. Let $A_1=A_0\setminus C$, and let $G_1$ be the subgraph of $G_0$ induced on $A_1\cup B_0$. The number of edges of $G_1$ is at least $d_0|A_1|/2\geq d_0 (|A_1|+|B_0|)/6$, so the average degree of $G_1$ is at least $d_0/3$. Let $G'$ be an induced subgraph of $G_1$ of maximum average degree, and let $d'$ be the average degree of $G'$. Then $d'\geq d_0/3$, and every vertex of $G'$ has degree at least $d'/2\geq d_0/6\geq d/6$. Furthermore, if $A'\subset A_1$ and $B'\subset B_0$ are the vertex classes of $G'$, then every degree in $A'$ is at most $2d_0\leq 6d'$. This finishes the proof.
\end{proof}

Now the idea of the proof is as follows. After passing to a submatrix $N$ which is close to regular from one side, say all columns have $\Theta(d')$ one entries, we use the fact that $$\gamma_2(N)\geq||N\circ (u v^T)||_{\tr}$$
for any choice of unit vectors $u$ and $v$ (of the appropriate dimension). We choose $u$ to be a vector, whose entries are based on the degree distribution of the rows, and choose $v$ to be the normalized all-ones vector. Let $A=N\circ (u v^T)$, then we inspect the matrices $B=AA^T$ and $B^2$. With the help of the Cauchy interlacing theorem, we show that the singular values of $A$ follow a certain distribution,  and thus find a lower bound for $||A||_{\tr}$.

\begin{lemma}\label{lemma:C4-free}
Let $M$ be a four cycle-free Boolean matrix of average degree at least $d$. Then $$\gamma_2(M)=\Omega(\sqrt{d}).$$
\end{lemma}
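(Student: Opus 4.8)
The plan is to follow the roadmap sketched in the paragraph just before the statement. By Lemma~\ref{lemma:regularize}, it suffices to handle the case where $M=N$ itself is close to regular on one side; say every row and column has between $d'/2$ and (on one side, say columns) at most $6d'$ one entries, with $d'=\Omega(d)$, and crucially $M$ remains four cycle-free. So from now on assume $M$ is $m\times n$, four cycle-free, every column has $\Theta(d')$ ones, every row has at least $d'/2$ ones, and $\|M\|_{\col}^2\le 6d'$.

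Next I would use property 4 of the $\gamma_2$-norm: $\gamma_2(M)\ge \|M\circ(uv^T)\|_{\tr}$ for any unit vectors $u\in\mathbb{R}^m$, $v\in\mathbb{R}^n$. Take $v=\tfrac{1}{\sqrt n}\mathbf{1}$, the normalized all-ones vector, and choose $u$ with entries a function of the row degrees — the natural first guess being $u_i\propto$ a constant (i.e.\ $u$ also the normalized all-ones vector), which already gives $\|M\circ(uv^T)\|_{\tr}=\tfrac{1}{\sqrt{mn}}\|M\|_{\tr}$; more refined weightings $u_i\propto \deg(i)^{-1/2}$ or similar may be needed to control the contribution of high-degree rows. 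Set $A=M\circ(uv^T)$ and $B=AA^T$, an $m\times m$ positive semidefinite matrix. I want a good lower bound on $\|A\|_{\tr}=\sum\sigma_i(A)=\tr(B^{1/2})$.

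The idea is to extract information about the spectrum of $B$ from two moments: $\tr(B)=\|A\|_F^2$ and $\tr(B^2)=\|B\|_F^2$. We have $\tr(B)=\|A\|_F^2=\tfrac1n\sum_{i,j}M(i,j)^2 u_i^2=\tfrac1n\sum_i u_i^2\deg(i)$, which is $\Theta(d'/n)$ for the all-ones choice of $u$ since $\deg(i)=\Theta(d')$. For $\tr(B^2)$: the $(i,i')$ entry of $B=AA^T$ is $\tfrac1n u_iu_{i'}\,|N(i)\cap N(i')|$ (codegree of rows $i,i'$), and four cycle-freeness forces $|N(i)\cap N(i')|\le 1$ for $i\ne i'$, while the diagonal entry is $\tfrac1n u_i^2\deg(i)$. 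Hence $\tr(B^2)=\sum_{i,i'} B(i,i')^2 \le \tfrac{1}{n^2}\sum_i u_i^4\deg(i)^2 + \tfrac{1}{n^2}\sum_{i\ne i'} u_i^2 u_{i'}^2 \le \tfrac{1}{n^2}\big(\sum_i u_i^2\deg(i)\big)^2\cdot O(1)$ roughly — i.e.\ $\tr(B^2)=O((\tr B)^2)$ after checking the diagonal term $\tfrac{1}{n^2}\sum u_i^4\deg(i)^2$ is not dominant (this is where a bound like $\deg(i)=O(d')$ and $u_i^2\le$ (something) is used, and where the Cauchy interlacing theorem mentioned in the text likely enters — to pass to a submatrix where the diagonal is well behaved). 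Given $\tr(B)=:\tau$ and $\tr(B^2)=O(\tau^2)$ with $B\succeq 0$ of rank $\le n$, Cauchy--Schwarz in the form $\tr(B^{1/2})\ge \tr(B)^{3/2}/\tr(B^2)^{1/2}$ (i.e.\ $(\sum\sigma_i)(\sum\sigma_i^{3/2})\cdot\!\ldots$; more precisely $\sum\lambda_i \le (\sum\lambda_i^{1/2})^{2/3}(\sum \lambda_i^2)^{1/3}$ by power-mean / Hölder) yields $\tr(B^{1/2})\ge \Omega(\tau / \sqrt{\tau^2})\cdot\tau^{1/2}\ \Rightarrow\ \|A\|_{\tr}\ge \Omega(\sqrt{\tau})$. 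Plugging $\tau=\Theta(d'/n)\cdot(\text{number of rows})$... — careful bookkeeping of the normalization gives $\|A\|_{\tr}=\Omega(\sqrt{d'})=\Omega(\sqrt d)$, as desired.

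The main obstacle I expect is the moment/normalization bookkeeping: getting $\tr(B^2)=O((\tr B)^2)$ genuinely requires the degree irregularity to be tamed — the diagonal contribution $\sum_i B(i,i)^2$ can in principle dominate if a few rows have much larger degree or if $u$ is chosen poorly, which is exactly why Lemma~\ref{lemma:regularize} only regularizes one side and why the proof sketch mentions choosing $u$ according to the row degree distribution and invoking Cauchy interlacing to pass to a well-behaved principal submatrix of $B$. Balancing the choice of weights $u_i$ against the off-diagonal (codegree) contribution, which is controlled purely by four cycle-freeness, is the crux; once the two moments are pinned down, the spectral lower bound on $\|A\|_{\tr}$ is a routine Hölder inequality.
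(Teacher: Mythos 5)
Your high-level plan matches the paper's up to a point — regularize via Lemma~\ref{lemma:regularize}, take $A=N\circ(uv^T)$ with $v$ the normalized all-ones vector and $u$ a degree-dependent weight, and extract spectral information from $B=AA^T$ — but the central step, namely proving a \emph{global} two-moment bound $\tr(B^2)=O\bigl((\tr B)^2\bigr)$ and then applying the Hölder-type inequality $\|A\|_{\tr}\ge \tr(B)^{3/2}/\tr(B^2)^{1/2}$, does not go through, and the paper does something genuinely different at this stage. First, note that the paper's weight is $u_i=\sqrt{d_i/f}$, i.e.\ $u_i\propto \deg(i)^{+1/2}$, the \emph{opposite} of your suggested $\deg(i)^{-1/2}$; upweighting high-degree rows is what keeps the "mass" of $M$ visible in $A$. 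With the paper's weight, the diagonal term of $\tr(B^2)$ is $\frac{1}{f^2n^2}\sum_i d_i^4$, which can vastly exceed $(\tr B)^2=\frac{1}{f^2n^2}\bigl(\sum_i d_i^2\bigr)^2$ when the row degrees vary — and after Lemma~\ref{lemma:regularize} they can still range over $[d'/2,n]$, since only one side is regularized. So the global moment bound fails outright for that weight.

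Your proposed fix $u_i\propto\deg(i)^{-1/2}$ does tame the diagonal: one checks that $\tr(B)=m/(nS)$ and $\tr(B^2)=O\bigl(m/(n^2S^2)\bigr)$ with $S=\sum_i 1/d_i$, so Hölder gives $\|A\|_{\tr}\gtrsim m/\sqrt{nS}\gtrsim\sqrt{md'/n}$. But this is only $\Omega(\sqrt{d'})$ when $m=\Omega(n)$, and the regularization does not guarantee that. Indeed, for four-cycle-free matrices whose columns have degree $\Theta(d')$ and whose rows all have degree $\ge d'/2$, one can have $m$ as small as $\Theta(d'\sqrt{n})$ (the path-of-length-two count $\sum_j\binom{c_j}{2}\le\binom{m}{2}$ forces $m\gtrsim d'\sqrt{n}$ but nothing stronger), in which case $\sqrt{md'/n}\approx d'/n^{1/4}\ll\sqrt{d'}$ once $d'\ll\sqrt{n}$. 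So this variant provably loses. What the paper actually does is partition the rows into dyadic (triadic) degree classes $I_t$, run a moment argument \emph{inside each class} — where the diagonal of $B_t=B[I_t\times I_t]$ is flat because all $d_i\in[3^{t-1},3^t]$, so $\tr(B_t^2)$ really is $O((\tr B_t)^2\cdot 3^{-2t}fn/|I_t|)$ etc. — to conclude that $\Omega(|I_t|)$ eigenvalues of $B_t$ are $\Omega(3^{2t}/(fn))$, and then transfer this to $B$ by Cauchy interlacing and sum over $t$ with a telescoping correction so that the singular values credited to different $t$'s are not double-counted. You name Cauchy interlacing as "likely entering to pass to a well-behaved principal submatrix," but it is used block-by-block across an entire dyadic decomposition, not to pass to a single submatrix; and the two-moment estimate you want only holds at the level of each $B_t$, not of $B$ itself. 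This decomposition-and-recombination is the real content of the proof, and your outline currently has a gap exactly there.
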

\begin{proof}
Let $N$ be a submatrix of $M$ satisfying the outcome of Lemma \ref{lemma:regularize}. Let $d_0$ be the average degree of $N$, then $d_0\geq d/3$, every row and column of $N$ contains at least $d_0/2$ one entries, and $||N||_{\row}^2\leq 6d_0$ or $||N||_{\col}^2\leq 6d_0$. Without loss of generality, we assume that $||N||_{\col}^2\leq 6d_0$. In what follows, we only work with the matrix $N$, and our goal is to prove that $\gamma_2(N)=\Omega(\sqrt{d_0})$, which then implies $\gamma_2(M)=\Omega(\sqrt{d})$. To simplify notation, we write $d$ instead of $d_0$.

Let the size of $N$ be $m\times n$, and recall that for any choice of $u\in \mathbb{R}^m$ and $v\in \mathbb{R}^n$ with $||u||_2=||v||_2=1$, we have
$$\gamma_2(N)\geq ||M\circ (uv^T)||_{\tr}.$$
Let $f$ be the number of one entries of $N$, and for $i=1,\dots,m$, let $d_i$ be the number of one entries of row $i$. Note that $f=d_1+\dots+d_m$. Let $u\in \mathbb{R}^m$ be defined as $u(i)=\sqrt{d_i/f}$ for $i\in [m]$, and let $v\in \mathbb{R}^n$ be defined as $v(i)=1/\sqrt{n}$. Then $||u||_2=||v||_2=1$. Let $A=N\circ (uv^T)$, then $\gamma_2(N)\geq ||A||_{\tr}$, so it is enough to prove that $||A||_{\tr}=\Omega(\sqrt{d})$. Let $\sigma_1\geq \dots\geq \sigma_m\geq 0$ be the singular values of $A$ (with possibly zeros added to get exactly $m$ of them), then $\sigma_1^2,\dots,\sigma_m^2$ are the eigenvalues of $B=AA^T$. 

Define the auxiliary graph $H$ on $[m]$, where for $i,i'\in [m]$, $i\neq i'$, we have $i\sim i'$ in $H$ if there is some index $j\in [n]$ such that $N(i,j)=N(i',j)=1$. As $M$ is four cycle-free, there is at most one such index $j$ for every pair $(i,i')$. With this notation, we can write
$$B(i,i')=\frac{1}{fn}\begin{cases} d_i^2 &\mbox{ if }i=i'\\
                        \sqrt{d_id_{i'}} &\mbox{ if }i\sim i'\\
                        0 &\mbox{ otherwise.}\end{cases}$$
For $t=1,\dots,\lceil\log_3 n\rceil=:p$, let $I_t\subset [m]$ be the set of indices $i$ such that $3^{t-1}\leq d_i\leq 3^t$. Then $I_1,\dots,I_p$ forms a partition of $[m]$, and we note that $I_t$ is empty if $t\leq \log_3 d-1$. Let $B_t=B[I_t\times I_t]$, then $B_t$ is a principal submatrix of $B$.

\begin{claim}
At least $\Omega(|I_t|)$ eigenvalues of $B_t$ are at least $\Omega(3^{2t}/(fn))$. 
\end{claim}

\begin{proof}
Let $s=|I_t|$, $D=3^{t-1}$, and let $\lambda_1\geq \dots\geq \lambda_s\geq 0$ be the eigenvalues of $B_t$. Then 
\begin{equation}\label{equ:trace}
    \lambda_1+\dots+\lambda_s=\tr(B_t)=\frac{1}{fn}\sum_{i\in I_t}d_i^2\geq \frac{sD^2}{fn},
\end{equation}
and
$$\lambda_1^2+\dots+\lambda_s^2=||B_t||_2^2.$$
Here,
$$||B_t||_2^2=\sum_{i,i'\in I_t}B(i,i')^2=\frac{1}{(fn)^2}\left[\sum_{i\in I_t}d_i^4+\sum_{i\sim i',i,i'\in I_t}d_id_{i'} \right]\leq \frac{1}{(fn)^2}\left[81sD^{4}+18e(H[I_t])D^{2}\right],$$
where $e(H[I_t])$ denotes the number of edges of the subgraph of $H$ induced on the vertex set $I_t$. Let $G$ be the bipartite graph, whose bi-adjacency matrix is $N[I_t\times [n]]$. Then $e(H[I_t])$ is the number of pairs $\{i,i'\}\in I_t^{(2)}$ such that $i$ and $i'$ has a common neighbour in $G$. As every column of $N$ has at most $6d$ one entries, every vertex in $[n]$ has degree at most $6d$ in $G$. Thus, for each $i\in I_t$, there are at most $6dd_i\leq 18dD$ vertices $i'\in I_t$ which have a common neighbour with $i$. Therefore, $e(H[I_t])\leq 12dDs\leq 36D^2s$, where we used in the last inequality that $s=0$ unless $t\geq \log_3 d-1$. In conclusion, we proved that 
$$\lambda_1^2+\dots+\lambda_s^2\leq \frac{1000sD^4}{(fn)^2}.$$
Let $C=2000$, and let $r\leq s$ be the largest index such that $\lambda_r\geq \frac{CD^2}{fn}$. By the previous inequality, we have $r\leq \frac{1000s}{C^2}$. But then by the inequality between the arithmetic and square mean, $$\lambda_1+\dots+\lambda_r\leq r^{1/2}(\lambda_1^2+\dots+\lambda_r^2)^{1/2}\leq r^{1/2}\left(\frac{1000sD^4}{(fn)^2}\right)^{1/2}\leq \frac{sD^2}{2fn}.$$
Hence, comparing this with (\ref{equ:trace}), we deduce that
$$\lambda_{r+1}+\dots+\lambda_{s}\geq \frac{sD^2}{2fn}.$$
As $\frac{CD^2}{fn}\geq \lambda_{r+1}\geq\dots\geq \lambda_s$, this is only possible if at least $\frac{s}{4C}$ among $\lambda_{r+1},\dots,\lambda_s$ is at least $\frac{D^2}{4fn}$. This finishes the proof.
\end{proof}
As $B_t$ is a principal submatrix of $B$, its eigenvalues interlace the eigenvalues of $B$. Therefore, the previous claim implies that at least $c|I_t|$ eigenvalues of $B$ are at least $c3^{2t}/(fn)$ for some absolute constant $c>0$, and thus at least $c|I_t|$ singular values of $A$ are at least $c3^t/\sqrt{fn}$. 

We are almost done. Note that 
$$f=\sum_{i=1}^md_i\leq \sum_{t=1}^p 3^{t+1}|I_t|.$$
In order to bound $||A||_{\tr}=\sigma_1+\dots+\sigma_m$, we observe that for every $t$, if $|I_t|\geq \max\{|I_{t+1}|,\dots,|I_p|\}=:z_t$, then $$\sum_{i=z_t+1}^{|I_t|}\sigma_i\geq \frac{c3^t}{\sqrt{fn}}\cdot (c|I_t|-c|I_{t+1}|-\dots-c|I_p|).$$ Hence, 
\begin{align*}
||A||_{\tr}&=\sum_{i=1}^m \sigma_i\geq \sum_{t=1}^p \frac{c3^t}{\sqrt{fn}}\cdot (c|I_t|-c|I_{t+1}|-c|I_2|-\dots-c|I_{p}|)\\
&\geq \frac{c^2}{\sqrt{fn}}\sum_{t=1}^p |I_t|(3^t-3^{t-1}-\dots-3-1) \geq \frac{c^2}{2\sqrt{fn}}\sum_{t=1}^p3^t |I_t|\geq \frac{c^2\sqrt{f}}{6\sqrt{n}}.
\end{align*}
Here, in the first inequality, we use that if $|I_t|< z_t$, then the contribution of the $t$-th term is negative in the sum anyway. Finally, recall that every column of $N$ contains at least $d/2$ one entries, so $f\geq dn/2$. Therefore, $||A||_{\tr}\geq \frac{c^2}{12}\sqrt{d}$, finishing the proof.
\end{proof}

\begin{proof}[Proof of Theorem \ref{thm:main}]
We have $\gamma_2(M)\leq 2\sqrt{\dgc(M)}$ by Claim \ref{claim:dgc}, so it remains to prove that $\gamma_2(M)=\Omega(\sqrt{\dgc(M)})$. Let $N$ be a submatrix of $M$ of minimum degree $d=\dgc(M)$, then the average degree of $N$ is at least $d$. Hence, the previous lemma implies the desired lower bound by noting that $\gamma_2(M)\geq \gamma_2(N)$. 
\end{proof}

Next, we prove Theorem \ref{thm:degree_bounded}. In the proof, we use the following graph theoretic result of Gir\~ao and Hunter \cite{GH}, which tells us that in order to show that a hereditary graph family is degree-bounded, it is enough to consider its  four cycle-free members.

\begin{theorem}[Theorem 1.3 in \cite{GH}]\label{thm:GH}
Let $k\geq 2$ and let $t$ be sufficiently large. Every graph with average degree at least $t^{5000k^4}$ either contains $K_{t,t}$, or it contains an induced four cycle-free subgraph with average degree at least $k$.
\end{theorem}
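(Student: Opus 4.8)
The plan is to prove the quantitative statement by induction on $k$: there is a function $D(k)$ with $D(2)=t^{O(1)}$ and $D(k)\le t^{O(k^3)}\,D(k-1)$, hence $D(k)\le t^{5000k^4}$ once $t$ is large in terms of $k$, such that every graph of average degree at least $D(k)$ either contains $K_{t,t}$ or an induced four cycle-free subgraph of average degree at least $k$. Two standard reductions are available throughout: a graph of average degree $\ge D$ contains a subgraph of minimum degree $\ge D/2$, and it suffices to produce an induced $C_4$-free subgraph of \emph{minimum} degree $\ge k$. The hypothesis is used only through its K\H{o}v\'ari--S\'os--Tur\'an consequences: since $G$ is $K_{t,t}$-free, every $t$ vertices have at most $t-1$ common neighbours, and therefore for any vertex set $W$ the number of outside vertices with $\ge t$ neighbours in $W$, the number of $K_{2,\ell}$'s with $\ell\le t$, and the number of four-cycles supported on $W$ are all bounded in terms of $|W|$ and $t$.

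For the base case $k=2$, it suffices to find an induced cycle of length $\ne 4$ (which is itself an induced $C_4$-free subgraph of average degree $2$). After reducing to minimum degree $t^{O(1)}$: if there is a triangle we are done; otherwise the graph is triangle-free of minimum degree $\ge 3$, so it contains an induced cycle, and one shows that if every induced cycle had length $4$ then the $t$-wise codegree bound together with the minimum-degree hypothesis would force a $K_{t,t}$.

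The inductive step is the heart of the argument. The naive attempt --- take an induced $C_4$-free subgraph $H$ of minimum degree $\ge k-1$ from the inductive hypothesis and "lift" it by adjoining vertices --- fails: adjoining a single independent layer $X$ with each $x\in X$ having $\le 1$ neighbour in $V(H)$ keeps $H\cup X$ four cycle-free (any new $C_4$ would pass through an $X$-vertex of degree $\le 1$), but then the vertices of $X$ themselves have degree $\le 1$ in $H\cup X$. What is needed instead is a more global extraction: one must locate, inside $G$, a large family of induced $C_4$-free "gadgets" of controlled minimum degree whose union is again induced $C_4$-free, or equivalently a single induced $C_4$-free subgraph that additionally expands into pairwise low-interaction parts of $G$, so that the minimum degree can be raised across the whole of it at once. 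This is where the $K_{t,t}$-free counting is repeatedly invoked --- to bound, at the relevant scale, the vertices of high interaction with a given gadget, the number of $C_4$'s that must be destroyed to glue gadgets together, and the regularisation losses --- and where the degree threshold must blow up by a large power of $t$ (with exponent growing roughly like $k^3$) at each of the $k$ levels, producing the bound $D(k)\le t^{5000k^4}$.

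The main obstacle, and the source of the enormous constant, is that a $K_{t,t}$-free graph of given minimum degree need not contain \emph{any} small dense subgraph (incidence graphs of generalised polygons are globally sparse of high girth), so one cannot pass to a bounded host and finish by finite combinatorics; the localisation, the four-cycle destruction, and the balancing of the deletion losses against the minimum-degree gain must all be carried out afresh at each level of the recursion, with a carefully maintained invariant (an induced $C_4$-free subgraph with minimum degree $\ge k$ off a small exceptional set, plus a large reservoir of low-interaction vertices for the next level), and controlling the accumulation of these losses is exactly what forces $D(k)$ to grow tower-like in $k$.
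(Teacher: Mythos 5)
First, a point of reference: the paper you are writing into does not prove this statement at all. It is quoted verbatim as Theorem~1.3 of Gir\~ao and Hunter \cite{GH} and used as a black box, so there is no internal proof to compare your attempt against; your proposal has to stand on its own as a proof of the external result.

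Judged on that basis, it does not: there is a genuine gap at exactly the place you yourself identify as ``the heart of the argument.'' You correctly observe that the naive lift of an induced $C_4$-free subgraph of minimum degree $k-1$ fails, but what you offer in its place is a description of the \emph{properties} a successful construction would have (``a large family of induced $C_4$-free gadgets of controlled minimum degree whose union is again induced $C_4$-free,'' ``a carefully maintained invariant,'' ``a large reservoir of low-interaction vertices'') rather than the construction itself. No lemma is stated, no extraction procedure is defined, and nothing is verified; in particular the claimed recursion $D(k)\leq t^{O(k^3)}D(k-1)$ is asserted with no accounting of where the exponent comes from, so the bound $t^{5000k^4}$ is not actually derived. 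The difficulty you flag in the final paragraph --- that $K_{t,t}$-freeness gives no local density to work inside, as incidence graphs of generalized polygons show --- is real, and it is precisely the difficulty your sketch does not overcome. The base case has the same character: the claim that a triangle-free graph of minimum degree $t^{O(1)}$ in which every induced cycle is a $C_4$ must contain $K_{t,t}$ is plausible but is only gestured at (``one shows that \dots''), not proved. As it stands this is a research plan, not a proof; to make it one you would need to formulate and prove the gluing/extraction lemma for the inductive step, which is the entire content of the Gir\~ao--Hunter argument.
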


\begin{proof}[Proof of Theorem \ref{thm:degree_bounded}]
Let $M$ be a Boolean matrix with no $t\times t$ all-ones submatrix, and $\gamma_2(M)\leq \gamma$. Let $d$ be the average degree of $M$. It follows from Theorem \ref{thm:main} that if $N$ is a four cycle-free submatrix of $M$, then the average degree of $N$ is at most $k=O(\gamma^2)$. However, by Theorem \ref{thm:GH}, if $d>t^{5000k^4}$, then $N$ contains a four cycle-free submatrix of average degree at least $k$. Hence, $d<t^{O(\gamma^8)}$,  so choosing $d(\gamma,t)=t^{c\gamma^8}$ for some sufficiently large constant $c$ finishes the proof. 
\end{proof}

Finally, we prove Corollary \ref{cor:equivalence}.

\begin{proof}[Proof of Corollary \ref{cor:equivalence}]
The equivalence $2.\Leftrightarrow 3.$ follows from Claim \ref{claim:blocky}. The implication $2.\Rightarrow 1.$ follows from the inequality $\gamma_2(M)\leq \block(M)$. It remains to show that $1.\Rightarrow 3.$. Let $M\in\mathcal{M}$ such that $\gamma_2(M)\leq \gamma$. If $N$ is a submatrix of $M$, then $\gamma_2(N)\leq \gamma$ by monotonicity. Hence, applying Theorem \ref{thm:degree_bounded} to $N$, we get that the average degree of $N$ is at most $d$ for some $d=d(\gamma,t)$. But then $N$ has a row or column with at most $d$ one entries. As this is true for every submatrix $N$, the degeneracy of $M$ is at most $d$ as well.
\end{proof}

\section{Point-polytope incidences}\label{sect:zara}

In this section, we prove Theorems \ref{thm:zara1} and \ref{thm:zara2}. But first, we provide a formal definition of semilinear graphs, following \cite{BCSTT}.

\begin{definition}[Semilinear graphs]
A graph $G$ is \emph{semilinear} of complexity $(s,u)$ of dimension $(d_1,d_2)$ if $V(G)=V_1\cup V_2$ with $V_1\subset \mathbb{R}^{d_1}$, $V_2\subset \mathbb{R}^{d_2}$, and there exist $su$ linear functions $f_{i,j}:\mathbb{R}^{d_1+d_2}\rightarrow \mathbb{R}$ for $(i,j)\in [s]\times [u]$ such that for every $(x,y)\in V_1\times V_2$, $\{x,y\}$ is an edge if and only if 
$$\exists j\in [u], \forall i\in [s]: f_{i,j}(x,y)<0.$$ 
In case $d_1=d_2=d$, the dimension of $G$ is simply $d$.
\end{definition}

The main result of this section is the following theorem, which immediately implies both Theorems \ref{thm:zara1} and \ref{thm:zara2}.

\begin{theorem}\label{thm:main_zara}
Let $G$ be a semilinear graph of complexity $(s,u)$ of dimension $(d_1,d_2)$. If $G$ is $K_{t,t}$-free, then the average degree of $G$ is at most $O_{d_1,s,u}(t(\log n)^{4d_1+2}(\log\log n)^{s})$. 
\end{theorem}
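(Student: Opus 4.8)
The plan is to reduce the problem to an estimate on the $\gamma_2$-norm of the bipartite incidence/adjacency matrix $M$ of $G$, and then combine that estimate with Theorem \ref{thm:degree_bounded} (or rather with Lemma \ref{lemma:C4-free}, to keep the dependence on $t$ linear). First I would split the edge set of the semilinear graph into $u$ pieces, one for each disjunct $j\in[u]$; since $\gamma_2$ and average degree are both subadditive over such a partition (up to a factor $u$), it suffices to treat a single "conjunctive" semilinear relation, i.e. the case $u=1$, where $\{x,y\}$ is an edge iff $f_i(x,y)<0$ for all $i\in[s]$. Each linear inequality $f_i(x,y)<0$ in $2d_1$ variables defines, for fixed $x$, a halfspace in $\mathbb{R}^{d_1}$ whose normal direction is one of $s$ fixed directions; so the neighbourhood of each $x$ is the point set $V_2$ intersected with a polytope from $\POL(\mathcal{H})$ for a fixed set $\mathcal{H}$ of $s$ halfspaces — this is exactly the point–polytope setting of Theorem \ref{thm:zara1}. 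Thus the whole theorem follows once we bound $\gamma_2$ of the incidence matrix of $n$ points and $n$ polytopes from $\POL(\mathcal{H})$.

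The core step is therefore: \textbf{show that the incidence matrix $M$ of $n$ points and $n$ elements of $\POL(\mathcal{H})$ has $\gamma_2(M)=O_s((\log n)^{O(d_1)})$}, more precisely of order $(\log n)^{2d_1+1}(\log\log n)^{s/2}$ or so, matching the exponents in the statement. I would build this by a dyadic/canonical-interval decomposition. A halfspace in a fixed direction, restricted to a grid of $n$ coordinate values, is a "one-dimensional interval" type predicate; the classical fact is that the $\le$-comparison matrix (the staircase $0/1$ matrix) has $\gamma_2 = O(\log n)$ — it decomposes into $O(\log n)$ blocky matrices via the canonical dyadic intervals. Intersecting $s$ such predicates corresponds to a Hadamard product of $s$ staircase-type matrices, one per direction in $\mathcal{H}$; but a polytope in $\POL(\mathcal{H})$ only uses translates, so after choosing coordinates adapted to the (at most $d_1$) independent directions, each polytope is an intersection of $d_1$ "box-like" constraints plus $s-d_1$ further halfspace constraints in those same directions. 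I would handle the box-like part by the $d_1$-dimensional dyadic decomposition (cost $(\log n)^{d_1}$ in $\block$, hence in $\gamma_2$), and then the remaining halfspace constraints by a second layer of dyadic refinement; the $(\log\log n)^s$ factor should come from recursing on the "within a dyadic cell" structure, where each cell has polylogarithmically many points, so only $O(\log\log n)$ further levels are needed per direction. Using properties $4$ and $6$ of $\gamma_2$ (Hadamard/Kronecker) together with $\gamma_2\le\block$, this yields the claimed polylogarithmic bound. Finally, feeding $\gamma_2(M)=O_s((\log n)^{O(d_1)})$ into Lemma \ref{lemma:C4-free} is not quite enough because $G$ need not be four-cycle-free; instead I would feed it into the argument of Theorem \ref{thm:degree_bounded}, but tracking constants: the $K_{t,t}$-freeness and Theorem \ref{thm:GH} convert the $\gamma_2$ bound into an average-degree bound, and a more careful version keeping the linear dependence in $t$ (as in the deletion/spectral argument sketched before Lemma \ref{lemma:C4-free}) gives average degree $O_{d_1,s,u}(t\cdot \gamma_2(M)^2)=O_{d_1,s,u}(t(\log n)^{O(d_1)}(\log\log n)^s)$.

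The main obstacle I anticipate is getting the exponent of $\log n$ to depend only on $d_1$ and not on $s$ — naively, Hadamard-multiplying $s$ staircase matrices costs $(\log n)^s$ in $\gamma_2$, which is the bound of \cite{BCSTT}. The key insight that must be exploited is that the $s$ halfspaces point in at most... well, arbitrarily many directions, but the polytopes are \emph{homothets/translates} of a fixed shape, so the combinatorial complexity of the arrangement of $n$ such polytopes in $\mathbb{R}^{d_1}$ is governed by $d_1$, not $s$ (e.g. via a result bounding the number of distinct "cells" or via a spectral/discrepancy argument on the incidence matrix directly rather than on each constraint separately). Concretely, I expect the right move is to bound $\gamma_2(M)$ through property $4$, $\gamma_2(M)=\max_{u,v}\|M\circ(uv^T)\|_{\tr}$, and to estimate the trace norm using the geometry of the polytope arrangement (a weighted version of the Clarkson–Shor / cutting bound), so that the halfspace count $s$ enters only the constant and the $\log\log n$ factor, while the dimension $d_1$ controls the exponent of $\log n$. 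Making this spectral estimate go through cleanly — in particular choosing the dyadic hierarchy so that the off-diagonal mass of $M\circ(uv^T)(M\circ(uv^T))^T$ is controlled — is where the real work lies.
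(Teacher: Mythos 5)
Your proposal has the right high-level shape (split into $u$ disjuncts, reduce to a point--polytope incidence matrix, bound $\gamma_2$ of that matrix, then convert a $\gamma_2$ bound into an average-degree bound), and it correctly identifies that a naive dyadic decomposition gives $(\log n)^s$ rather than $(\log n)^{O(d_1)}$. But there are two substantive gaps.

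First, you spend most of your effort trying to \emph{prove} that the point--$\POL(\mathcal{H})$ incidence matrix has $\gamma_2=O_s((\log n)^{O(d_1)})$, and you never close this loop --- you explicitly say ``where the real work lies.'' The paper does not prove this: it invokes a theorem of Nikolov (Theorem \ref{thm:nikolov}) as a black box, and the reduction of the semilinear case to that theorem is short (Lemma \ref{lemma:semilin}). So the ``core step'' you identify is not actually a step of the proof; it is imported wholesale. Your attempt to rebuild it via a Hadamard/Kronecker decomposition into staircase matrices is exactly the approach that gives exponent $s$, and your proposed fix (coordinates adapted to $d_1$ ``independent'' directions plus a $\log\log n$-deep refinement for the remaining constraints) is not worked out and, as sketched, does not clearly avoid the $(\log n)^s$ trap, since there is no reason the $s$ halfspaces span a $d_1$-dimensional space of directions in any useful sense.

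Second, and more seriously, your final step is wrong. You claim that ``a more careful version keeping the linear dependence in $t$ \dots gives average degree $O_{d_1,s,u}(t\cdot\gamma_2(M)^2)$.'' No such statement is true or proved for general $K_{t,t}$-free matrices. Lemma \ref{lemma:C4-free} requires the matrix to be four-cycle-free, and Theorem \ref{thm:degree_bounded} (via the Gir\~ao--Hunter theorem) gives a bound of the form $t^{O(\gamma_2^8)}$, which is \emph{exponential} in $\gamma_2$; with $\gamma_2\sim(\log n)^{d_1}$ this is useless, and no amount of constant-tracking turns it into $t\gamma_2^2$. The actual mechanism in the paper is quite different: it first proves a weak divide-and-conquer bound $O_{s,u}\bigl(t(\log(n/t))^{s-1}\bigr)$ (Lemma \ref{lemma:weak_upper}), then uses $\gamma_2(M)$ together with the trace inequality $\|M\|_4^4\ge\|M\|_2^6/(mn\,\gamma_2(M)^2)$ and a biregularization step to extract a comparatively small but dense $z\times z$ submatrix (Lemmas \ref{lemma:biregular1}--\ref{lemma:dense_submatrix}). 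Applying the weak bound to that small submatrix --- where $\log(z/t)$ is only $O(\log\log n)$ --- produces the $(\log\log n)^s$ factor and a contradiction if the original average degree were too large. This ``boost from a dense submatrix'' step is the real engine of the theorem and is entirely absent from your proposal; without it (or a genuine substitute), the argument does not close.

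Incidentally, had your claimed bound $O(t\gamma_2^2)=O(t(\log n)^{2d_1})$ been available, it would be \emph{stronger} than what the theorem asserts, which should itself have been a warning sign that the step was too good to be true.
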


First, we present a weaker upper bound of the form $O_{s,u}(t(\log (n/t))^{s-1})$, which is used as a ''boosting'' step. This is similar to the upper bound $O_{s,u}(t(\log n)^{s})$ proved in \cite{BCSTT}. However, the crucial difference is the dependence on $t$, as the former gives much stronger bounds when $t$ is close to $n$. This improvement is important to achieve optimal dependence on $t$ in Theorem~\ref{thm:main_zara}. Our proof follows the simple divide-and-conquer approach of \cite{BCSTT} and \cite{TZ}.

\begin{lemma}\label{lemma:weak_upper}
    Let $G$ be a semilinear graph on $n$ vertices of complexity $(s,u)$. If $G$ contains no $K_{t,t}$, then the average degree of $G$ is at most $O_{s,u}(t(\log (n/t))^{s-1})$.
\end{lemma}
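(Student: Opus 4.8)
The plan is to run a divide-and-conquer recursion on the dimension parameter $s$, in the spirit of the approach of Basit, Chernikov, Starchenko, Tao, and Tran and of Tomon and Zakharov. Write the edge relation as $\{x,y\}$ being an edge iff $\exists j\in[u]$ with $f_{i,j}(x,y)<0$ for all $i\in[s]$; by a union bound over the $u$ ''clauses'' it suffices (losing only a factor $u$ in the average degree) to treat a graph $G$ defined by a single clause, i.e.\ by $s$ linear inequalities $f_1(x,y)<0,\dots,f_s(x,y)<0$. So I will prove the bound $O_s(t(\log(n/t))^{s-1})$ for such ''$1$-clause'' semilinear graphs and induct on $s$.

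For the inductive step, fix one of the linear functions, say $f_s(x,y)$. Since $f_s$ is linear in $(x,y)$, we can split it as $f_s(x,y)=g(x)+h(y)+c$ for linear functionals $g$ on $\mathbb{R}^{d_1}$ and $h$ on $\mathbb{R}^{d_2}$. The constraint $f_s(x,y)<0$ then reads $g(x)<-h(y)-c$; this is exactly the kind of ''one halfplane in a lifted coordinate'' that a segment-tree / interval-tree decomposition handles. Concretely, order the vertices of $V_1$ by the value $g(x)$ and those of $V_2$ by $-h(y)-c$, merge the two sorted lists, and build a balanced binary recursion tree on the merged order of $n$ elements. Every edge of $G$ (which in particular satisfies $g(x)<-h(y)-c$, so $x$ precedes $y$ in the merged order) is ''charged'' to the unique node of the tree where $x$ lies in the left subtree and $y$ in the right subtree; there are $O(\log n)$ levels, and at the charged node the pair $(x,y)$ automatically satisfies $f_s<0$, so within that node the induced graph is governed only by the remaining $s-1$ inequalities $f_1,\dots,f_{s-1}$, i.e.\ it is a $1$-clause semilinear graph of complexity $(s-1,1)$, still $K_{t,t}$-free. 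Summing the induction hypothesis over the two children of each charged node and over all $O(\log n)$ levels, and using convexity of $x\mapsto x(\log(x/t))^{s-2}$ to push the per-node vertex counts up to $n$, gives roughly $e(G)=O\big(tn(\log n)(\log(n/t))^{s-2}\big)=O\big(tn(\log(n/t))^{s-1}\big)$, as desired. The base case $s=1$ is a single inequality $g(x)<-h(y)-c$: here $G$ is a ''half-graph'' type bipartite graph (after sorting, $x$ is joined to a down-set / up-set of the $y$'s), and $K_{t,t}$-freeness forces the total number of edges to be $O(tn)$ — this is the standard observation that a $K_{t,t}$-free ''staircase'' bipartite graph has $O(t(m+n))$ edges.

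The main obstacle — and the reason for being careful — is the dependence on $t$: a naive recursion that forgets about $t$ only yields $O(n(\log n)^s)$, which is exactly the weaker bound already in the literature and is useless as the ''boosting step'' the paper needs. To get the factor $t(\log(n/t))^{s-1}$ instead, I have to (i) keep the induction hypothesis in the sharp form $O_s(t\,N(\log(N/t))^{s-1})$ where $N$ is the number of vertices in the current subproblem, (ii) at each level truncate the recursion once a subproblem has $O(t)$ vertices — such a subproblem contributes only $O(t)$ edges trivially, not $O(t(\log 1)^{s-1})$ nonsense — so that a subproblem of size $N\le t$ is handled directly, and (iii) invoke convexity correctly when summing $\sum_i N_i (\log(N_i/t))^{s-2}$ over the parts $N_i$ at a fixed level, whose total is $n$, to bound it by $n(\log(n/t))^{s-2}$. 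Handling these three bookkeeping points carefully is what turns the routine divide-and-conquer into the quantitatively correct Lemma~\ref{lemma:weak_upper}.
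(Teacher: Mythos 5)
Your proposal is correct and takes essentially the same approach as the paper: reduce to a single clause, encode the $s$ linear constraints as a coordinate-wise dominance relation in $\mathbb{R}^s$, split by a median hyperplane in the last coordinate (your ``balanced binary recursion tree'' is exactly the unrolled form of the paper's recursion $f_s(n)\le 2f_s(\lceil n/2\rceil)+f_{s-1}(n)$), handle crossing edges with the $(s-1)$-dimensional bound, and use the base case $s=1$ plus the truncation at subproblem size $t$ to get the sharp $t(\log(n/t))^{s-1}$ dependence.
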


\begin{proof}
By the definition of semilinear graphs, there exist $u$ semilinear graphs $G_1,\dots,G_u$ of complexity $(s,1)$ on vertex set $V(G)$, whose union is $G$. Hence, it is enough to prove that $e(G_i)=O_{s}(t(\log (n/t))^{s-1})$.

To simplify notation, we assume that $G$ is semilinear of complexity $(s,1)$, and write $V=V_1\cup V_2$. Then there exist $s$ linear functions $f_1,\dots,f_s$ such that $(x,y)\in V_1\times V_2$ is an edge if and only if $f_{i}(x,y)<0$. As $f_{i}$ is linear, we can write $f_i(x,y)=g_i(x)+h_i(y)$. For every $x\in V_1$, let $\tilde{x}=(g_i(x))_{i\in [s]}\in \mathbb{R}^s$, and for every $y\in V_2$, let $\tilde{y}=(-h_i(y))_{i\in [s]}\in\mathbb{R}^s$. Then $(x,y)$ is an edge if and only if $\tilde{x}\prec \tilde{y}$, where $\prec$ denotes the usual coordinate-wise ordering (i.e. $(a_1,\dots,a_s)\prec (b_1,\dots,b_s)$ if $a_i<b_i$ for every $i\in [s]$). Let $U_i=\{\tilde{x}: x\in V_i\}$ for $i=1,2$, and consider $G$ as the graph on vertex set $U_1\cup U_2$. 

 Let $f_{s}(n)$ denote the maximum number of edges of a $K_{t,t}$-free $n$ vertex graph defined in the manner above. We aim to show that $f_s(n)=O_s(tn(\log (n/t)^{s-1}))$.  We proceed by induction on $s$ and $n$. Consider the base case $s=1$. In this case, $U_1\cup U_2\subset \mathbb{R}$, and $\prec$ is the usual ordering of real numbers. Delete the $t$ largest elements of $U_1$, and the $t$ smallest elements of $U_2$. Then we deleted at most $2tn$ edges of $G$. If $G$ still has as an edge $(x,y)$, then $x<y$, and the $2t$ deleted vertices form a copy of $K_{t,t}$. Therefore, we must have $e(G)\leq 2tn$, confirming the case $s=1$.

Now assume that $s\geq 2$, then $V(G)\subset \mathbb{R}^s$. We use the trivial bound $f_s(n)\leq n^2$ in case $n\leq t$. Therefore, $f_s(n)\leq tn(\log (n/t))^{s-1}$ is satisfied if $s=1$ or $n\leq t$. Now consider $n>t$. Let $H$ be a hyperplane orthogonal to the last coordinate axis in $\mathbb{R}^s$ such that the two half-spaces bounded by $H$ both contain at most $\lceil n/2\rceil$ points of $V(G)$. Let $A\cup B$ be the partition of $V(G)$ given by $H$ with the elements of $A$ having smaller last coordinate. Then $G[A]$ and $G[B]$ both have at most $f_s(\lceil n/2\rceil)$ edges. Furthermore, we can count the number of edges between $A$ and $B$ as follows. Let $W_1$ be the projection of $U_1\cap A$ to $H$, and let $W_2$ be the projection of $U_2\cap B$ to $H$. Then we can view $W_1\cup W_2$ as a subset of $\mathbb{R}^{s-1}$, and for $x\in U_1\cap A$ and $y\in U_2\cap B$, we have $x\prec y$ if and only if $x'\prec y'$, where $x'$ are $y'$ are the projections of $x$ and $y$. Therefore, the number of edges between $A$ and $B$ is bounded by $f_{s-1}(n)$. In conclusion, we get that 
$$f_s(n)\leq 2f_s(\lceil n/2\rceil)+f_{s-1}(n).$$
It is easy to show that with the induction hypothesis $f_{s-1}(n)\leq O_s(tn(\log (n/t))^{s-2})$ and boundary condition $f_{s}(n)\leq n^2$ if $n<t$, we get that $f_s(n)=O_s(tn((\log n/t)^{s-1}))$.
 \end{proof}

Recall that if $\mathcal{H}$ is a set of half-spaces in $\mathbb{R}^d$, then  $\POL(\mathcal{H})$ denotes the set of all polytopes that can be written as $\bigcap_{H\in \mathcal{H}} H'$, where $H'$ is some translation of $H$. The next result, due to Nikolov \cite{Nikolov}, is one of the key ingredients in our proof. See the remark after Theorem 12 in \cite{Nikolov} for the following theorem, which we use as a black box.
 
\begin{theorem}\label{thm:nikolov}
Let $\mathcal{H}$ be a set of $D$  half-spaces in $\mathbb{R}^d$, and let $P$ be a set of $n$ points. If $M$ is the incidence matrix of $P$ and $\POL(\mathcal{H})$, then $\gamma_2(M)=O_{d,D}((\log n)^{d})$. 
\end{theorem}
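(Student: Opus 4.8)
Here is how I would approach proving Theorem~\ref{thm:nikolov}. I would split the argument into a reduction to point--orthant incidences in $\mathbb{R}^{d}$, a treatment of the axis-aligned part by a segment-tree/Hadamard argument, and finally the incorporation of the remaining facet directions, which is the hard part.

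\emph{Reduction.} Let $a_1,\dots,a_D$ be the fixed outer normals of the half-spaces of $\mathcal{H}$, and consider the linear map $\phi(q)=(\langle a_1,q\rangle,\dots,\langle a_D,q\rangle)$. Every $Q\in\POL(\mathcal{H})$ has the form $Q(c)=\{x:\langle a_j,x\rangle\le c_j,\ j\in[D]\}$ for some $c\in\mathbb{R}^{D}$, and $q\in Q(c)$ iff $\phi(q)\preceq c$ coordinatewise. Relabelling rows by $\phi$, deleting repeated rows/columns (which leaves $\gamma_2$ unchanged), and applying an invertible linear change of coordinates to the points, I may assume $a_1,\dots,a_D$ span $\mathbb{R}^{d}$ (else pass to a quotient and decrease $d$), that $a_1,\dots,a_d$ is a basis, and that $(\langle a_1,\cdot\rangle,\dots,\langle a_d,\cdot\rangle)$ are the coordinates. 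Then $M$ becomes the incidence matrix of $n$ points in $\mathbb{R}^{d}$ with regions
\[
R(c)=\Big(\prod_{j=1}^{d}(-\infty,c_j]\Big)\cap\bigcap_{j=d+1}^{D}\{z:\langle\ell_j,z\rangle\le c_j\},
\]
where $\ell_{d+1},\dots,\ell_D$ are fixed linear functionals: a downward orthant cut by $D-d$ half-spaces of fixed direction.

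\emph{The orthant part.} The $n\times n$ threshold matrix $T$ (with $T[i,j]=1\iff i\le j$) satisfies $\gamma_2(T)=O(\log n)$: splitting $[n]$ into halves, $T$ is a direct sum of two half-sized threshold matrices plus a single all-ones off-diagonal block, so by subadditivity, the identity $\gamma_2(A\oplus B)=\max\{\gamma_2(A),\gamma_2(B)\}$, and $\gamma_2(\text{block})=1$ one gets $\gamma_2(T_n)\le\gamma_2(T_{\lceil n/2\rceil})+1$. Running the same idea with a segment tree shows that the incidence matrix of $n$ points on a line with all intervals also has $\gamma_2=O(\log n)$, since every interval meets the points in at most two canonical dyadic blocks per level and the ``level $k$, side $s$'' part is a blocky matrix. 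Finally, the incidence matrix of points in $\mathbb{R}^{d}$ with boxes (in particular with downward orthants) is a Hadamard product of $d$ such matrices, one per coordinate; since $A\circ B$ is a submatrix of $A\otimes B$, monotonicity together with $\gamma_2(A\otimes B)=\gamma_2(A)\gamma_2(B)$ gives $\gamma_2(A\circ B)\le\gamma_2(A)\gamma_2(B)$, so the orthant factor of $R(c)$ contributes only $O((\log n)^{d})$.

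\emph{The extra facets --- the crux.} The cheap way to bring in the $D-d$ remaining constraints $\langle\ell_j,z\rangle\le c_j$ is to Hadamard-multiply one more threshold matrix for each (each of $\gamma_2=O(\log n)$, depending only on $\langle\ell_j,z_i\rangle$ and $c_j$); this yields $\gamma_2(M)=O((\log n)^{D})$, which is correct but has the wrong exponent. To reach the exponent $d$ one must instead build a \emph{single} $d$-dimensional hierarchical (range-tree-type) decomposition of the point set adapted to all $D$ facet directions simultaneously, exploiting that only $O(n)$ translates of each fixed hyperplane are combinatorially relevant and that the points span only $d$ dimensions, and then prove that every $Q\in\POL(\mathcal{H})$ meets the point set in a disjoint union of just $O_{d,D}((\log n)^{d})$ canonical cells of this decomposition, each contributing one blocky matrix. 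Carrying this out --- in particular verifying that the ``diagonal'' facets cost no additional logarithmic factors --- is the main obstacle; this is exactly what the remark after Theorem~12 of Nikolov's paper~\cite{Nikolov} provides, and I would invoke it as a black box. (If one is content with the weaker bound $\gamma_2(M)=(\log n)^{O(d)}$, which already suffices for every application in this paper, there is a shortcut: the hereditary discrepancy of $n$ points against fixed-direction polytopes in $\mathbb{R}^{d}$ is $(\log n)^{O(d)}$, and the Matou\v{s}ek--Nikolov--Talwar inequality $\gamma_2(M)=O(\herdisc(M)\log m)$ quoted in the introduction~\cite{MNT} then yields the claim.)
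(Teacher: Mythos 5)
The paper does not actually prove this theorem: it explicitly states that it uses the remark after Theorem~12 of Nikolov~\cite{Nikolov} as a black box, which is precisely what you do for the decisive step of reducing the exponent from $D$ to $d$ (your reduction to orthants and the $O((\log n)^{D})$ Hadamard-product warm-up are correct but do not reach the stated bound on their own). Since the crucial ingredient in both your proposal and the paper is the same external citation, your treatment coincides with the paper's.
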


From this, we conclude the following bound on the $\gamma_2$-norm of semilinear graphs.

\begin{lemma}\label{lemma:semilin}
Let $G$ be a semilinear graph on $n$ vertices of complexity $(s,u)$ of dimension $(d_1,d_2)$. If $M$ is the bi-adjacency matrix of $G$, then $\gamma_2(M)=O_{d_1,s,u}((\log n)^{d_1})$.
\end{lemma}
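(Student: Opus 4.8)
The plan is to reduce Lemma~\ref{lemma:semilin} to Theorem~\ref{thm:nikolov} by showing that a semilinear graph of complexity $(s,1)$ and dimension $(d_1,d_2)$ is (up to relabeling vertices) a subgraph of a point--polytope incidence graph in a bounded dimension, where the half-spaces all have boundary orthogonal to a coordinate axis. First I would handle the union: since $G$ is the union of $u$ semilinear graphs $G_1,\dots,G_u$ of complexity $(s,1)$, subadditivity of $\gamma_2$ (property~3) gives $\gamma_2(M)\le\sum_{j=1}^u\gamma_2(M_j)$, so it suffices to bound each $\gamma_2(M_j)$, and we may assume $u=1$.

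Next, for a complexity-$(s,1)$ semilinear graph, exactly as in the proof of Lemma~\ref{lemma:weak_upper} I would write each linear function as $f_i(x,y)=g_i(x)+h_i(y)$ and pass to the coordinates $\tilde x=(g_i(x))_{i\in[s]}\in\mathbb R^s$, $\tilde y=(-h_i(y))_{i\in[s]}\in\mathbb R^s$, so that $\{x,y\}$ is an edge iff $\tilde x\prec\tilde y$ coordinatewise. Thus the bi-adjacency matrix $M_j$ is a submatrix of the incidence matrix $M'$ of the point set $U_2=\{\tilde y\}\subset\mathbb R^s$ against the family of boxes (orthants) $C_{\tilde x}=\{z\in\mathbb R^s:\tilde x\prec z\}$, one per $\tilde x\in U_1$. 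By monotonicity (property~2), $\gamma_2(M_j)\le\gamma_2(M')$. Now each orthant $C_{\tilde x}$ is an intersection of $s$ translates of the $s$ fixed half-spaces $H_i=\{z:z(i)>0\}$, i.e. $C_{\tilde x}\in\POL(\mathcal H)$ where $\mathcal H=\{H_1,\dots,H_s\}$ is a fixed set of $D=s$ half-spaces in $\mathbb R^s$. (If $U_2$ has repeated points, duplicating rows does not change $\gamma_2$ by property~7, and likewise for repeated boxes; alternatively just note a submatrix of the incidence matrix of the distinct points and distinct polytopes.) Applying Theorem~\ref{thm:nikolov} with $d=D=s$ to at most $n$ points yields $\gamma_2(M')=O_{s}((\log n)^{s})$.

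This already gives a bound of the form $O_{s,u}((\log n)^s)$, but the lemma claims the exponent $d_1$, which is typically much smaller than $s$. To recover the exponent $d_1$ I would be more careful about the dimension in which the orthant family actually lives. The point is that $\tilde x=(g_i(x))_{i\in[s]}$ is the image of $x\in\mathbb R^{d_1}$ under a fixed linear map $\mathbb R^{d_1}\to\mathbb R^s$, so $U_1$ lies in a linear subspace of dimension at most $d_1$; projecting onto that subspace and intersecting the orthants with it, the incidence structure between $U_1$ and $U_2$ is realized by points in $\mathbb R^{d_1}$ and polytopes in $\POL(\mathcal H')$ for a fixed set $\mathcal H'$ of at most $s$ half-spaces in $\mathbb R^{d_1}$ — here one uses that the relation $\tilde x\prec\tilde y$ only depends on $\tilde x$ through its (at most $d_1$-dimensional) position, so after a linear change of coordinates one may take the ambient space to have dimension $d_1$ while the number of defining half-spaces stays at $s$. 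Then Theorem~\ref{thm:nikolov} with $d=d_1$ and $D=s$ gives $\gamma_2(M')=O_{d_1,s}((\log n)^{d_1})$, and summing over the $u$ pieces gives $\gamma_2(M)=O_{d_1,s,u}((\log n)^{d_1})$.

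The main obstacle is exactly this last dimension-reduction step: one must verify that after writing $f_i(x,y)=g_i(x)+h_i(y)$, the incidence relation genuinely depends on $x$ only through its image in a $d_1$-dimensional space of parameters and can be packaged as a $\POL(\mathcal H')$-incidence in $\mathbb R^{d_1}$ with a \emph{fixed} $\mathcal H'$ independent of $n$ (so that the constant in Theorem~\ref{thm:nikolov} is legitimately $O_{d_1,s}(1)$), and symmetrically for $y$ and $d_2$ — noting that $\gamma_2$ is invariant under transpose, so we are free to use whichever of $d_1,d_2$ appears in the statement. Everything else (subadditivity over the $u$ pieces, monotonicity under passing to submatrices, invariance under row/column duplication) is routine bookkeeping with the properties listed in Section~2.
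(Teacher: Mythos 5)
Your high-level plan (reduce to Theorem~\ref{thm:nikolov}) is the right one, but there are two genuine gaps. The first is the reduction to $u=1$: writing $M_j$ for the bi-adjacency matrix of the $j$-th piece, what holds is $M=\max_j M_j$ (entrywise OR), not $M=\sum_j M_j$, since the clauses defining different $G_j$ can hold simultaneously for the same pair $(x,y)$. Subadditivity of $\gamma_2$ therefore does not yield $\gamma_2(M)\le\sum_j\gamma_2(M_j)$, and $\gamma_2$ is not monotone under entrywise domination, so this step fails as stated. (It could be repaired by inclusion--exclusion over the Hadamard products $\prod_{j\in S}M_j$, each semilinear of complexity $(s|S|,1)$, but see below for what the paper does instead.)

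The second gap is the dimension-reduction step, which you yourself flag as the main obstacle, and it is where the argument as written breaks. In your setup $U_2$ is the point set and $U_1$ indexes the orthants $C_{\tilde x}$, but it is $U_1$, not $U_2$, that lies in the $\le d_1$-dimensional subspace $W=g(\mathbb{R}^{d_1})$; projecting the points of $U_2$ onto $W$ and intersecting the orthants with $W$ does not preserve incidences, since moving $\tilde y$ onto $W$ can change whether $\tilde y\in C_{\tilde x}$. To make it work one must swap roles: treat $\tilde x\in W$ as the points and $C'_{\tilde y}=\{z:z\prec\tilde y\}$ as the polytopes, so that $\tilde x\in C'_{\tilde y}$ iff $\tilde x\in C'_{\tilde y}\cap W$, and each $C'_{\tilde y}\cap W$ lies in $\POL(\mathcal{H}')$ for a fixed $\mathcal{H}'$ of at most $s$ halfspaces of $W\cong\mathbb{R}^{\le d_1}$. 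After this swap you have, via an unnecessary detour through $\mathbb{R}^s$, essentially rederived what the paper does directly: for each $y$ and each sign pattern $\varepsilon\in\{-1,1\}^{[s]\times[u]}$ it sets $Q_\varepsilon(y)=\bigcap_{i,j}\{x\in\mathbb{R}^{d_1}:\varepsilon_{i,j}f_{i,j}(x,y)<0\}$, a polytope in $\mathbb{R}^{d_1}$ formed from translates of the $su$ fixed halfspaces $\{\varepsilon_{i,j}\langle a_{i,j},x\rangle<0\}$. For each fixed $y$ these $Q_\varepsilon(y)$ are pairwise disjoint, so $M=\sum_{\varepsilon\in E}M_\varepsilon$ is an honest sum; this fixes both gaps at once, since subadditivity now applies legitimately and each $M_\varepsilon$ is a submatrix of a point--$\POL(\mathcal{H}_\varepsilon)$ incidence matrix in ambient dimension $d_1$, to which Theorem~\ref{thm:nikolov} applies with $d=d_1$, $D=su$, and no passage through $\mathbb{R}^s$ is needed.
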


\begin{proof}
    Let $V(G)=V_1\cup V_2$ and let $f_{i,j}:\mathbb{R}^{d_1+d_2}\rightarrow \mathbb{R}$ be the defining linear functions of $G$. Then $f_{i,j}(x,y)=\langle a_{i,j},x\rangle+\langle b_{i,j},y\rangle +c_{i,j}$ with some $a_{i,j}\in \mathbb{R}^{d_1}, b_{i,j}\in \mathbb{R}^{d_2}, c_{i,j}\in \mathbb{R}$.
    For $y\in V_2$ and $\varepsilon\in \{-1,1\}^{[s]\times [t]}$, let $Q_{\varepsilon}(y)$ be the polytope defined as
    $$Q_{\varepsilon}(y)=\bigcap_{(i,j)\in [s]\times [t]}\{ x\in \mathbb{R}^{d_1}: \varepsilon_{i,j}f_{i,j}(x,y)<0\}.$$
    Then for every $y$, the $2^{su}$ polytopes $Q_{\varepsilon}(y)$ are pairwise disjoint. Also, with an appropriate choice of $E\subset \{-1,1\}^{[s]\times [u]}$, we have that $\{x,y\}$ is an edge of $G$ if and only if $x\in \bigcup_{\varepsilon\in E} Q_{\varepsilon}(y)$.

    Let $M_{\varepsilon}$ be the incidence matrix of $V_1$ and $\{Q_{\varepsilon}(y)\}_{y\in Y}$. If $\mathcal{H}_{\varepsilon}$ is the set of half-spaces $\{\varepsilon_{i,j}\langle a_{i,j},x\rangle<0\}$, then $M_{\varepsilon}$ is a submatrix of the incidence matrix of $V_1$ and $\POL(\mathcal{H}_{\varepsilon})$. Therefore, $\gamma_2(M_{\varepsilon})=O_{d_1,s,u}((\log n)^{d_1})$ by Theorem \ref{thm:nikolov} and the monotonicity of the $\gamma_2$-norm. As $M=\sum_{\varepsilon\in E}M_{\varepsilon}$, we conclude that $\gamma_2(M)=O_{d_1,s,u}((\log n)^{d_1})$ as well by the subadditivity of the $\gamma_2$-norm.
\end{proof}

In the case of $K_{2,2}$-free semilinear graphs, we get the following immediate corollary of the previous lemma and Theorem \ref{thm:main}.

\begin{theorem}
Let $G$ be a semilinear graph on $n$ vertices of complexity $(s,u)$ of dimension $(d_1,d_2)$. If $G$ is four cycle-free, then the average degree of $G$ is $O_{d_1,s,u}((\log n)^{2d_1})$.
\end{theorem}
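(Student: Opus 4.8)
The plan is to combine the $\gamma_2$-norm bound from Lemma~\ref{lemma:semilin} with the four cycle-free dictionary of Theorem~\ref{thm:main}. Concretely: suppose $G$ is a four cycle-free semilinear graph on $n$ vertices with bi-adjacency matrix $M$. By Lemma~\ref{lemma:semilin}, $\gamma_2(M)=O_{d_1,s,u}((\log n)^{d_1})$. Now apply Theorem~\ref{thm:main}, which says that for a four cycle-free Boolean matrix, $\gamma_2(M)=\Theta(\sqrt{\dgc(M)})$; in particular $\sqrt{\dgc(M)}=O(\gamma_2(M))$. Combining, $\dgc(M)=O_{d_1,s,u}((\log n)^{2d_1})$, and since the average degree of a graph is at most twice its degeneracy, the average degree of $G$ is $O_{d_1,s,u}((\log n)^{2d_1})$, as claimed.

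A small technical point to address: Theorem~\ref{thm:main} and the definition of $\dgc$ are stated for a Boolean matrix $M$ viewed as the bi-adjacency matrix of a bipartite graph, whereas a general semilinear graph $G$ of dimension $(d_1,d_2)$ need not be bipartite. The clean way to handle this is to note that the incidence structure defining $G$ is intrinsically a bipartite relation between $V_1\subset\mathbb{R}^{d_1}$ and $V_2\subset\mathbb{R}^{d_2}$ — the matrix $M$ in Lemma~\ref{lemma:semilin} is precisely the bi-adjacency matrix of this bipartite relation. So one should phrase the conclusion as a bound on the average degree of this bipartite incidence graph; if $V_1$ and $V_2$ happen to coincide or overlap, the edge count of $G$ is bounded by (a constant times) the edge count of the bipartite version, so the average degree bound transfers. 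Alternatively, one simply states the theorem for the bipartite incidence graph, which is the natural object and the one used in the applications to Zarankiewicz problems.

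I expect no serious obstacle here: this is a two-line corollary, with Lemma~\ref{lemma:semilin} and Theorem~\ref{thm:main} doing all the work. The only place requiring a moment's care is tracking the exponent — Lemma~\ref{lemma:semilin} gives the $d_1$ power on $\log n$ for the $\gamma_2$-norm, and squaring (from the $\Theta(\sqrt{d})$ relationship in Theorem~\ref{thm:main}) produces the $2d_1$ in the final bound, with all dependence on $s$ and $u$ absorbed into the implied constant exactly as in Lemma~\ref{lemma:semilin}.

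\begin{proof}
Let $V(G)=V_1\cup V_2$ be the natural bipartition coming from the semilinear description, and let $M$ be the bi-adjacency matrix of the bipartite incidence graph on $V_1\cup V_2$, so that the edge set of $G$ corresponds to the one entries of $M$. Since $G$ is four cycle-free, $M$ contains no $2\times 2$ all-ones submatrix. By Lemma~\ref{lemma:semilin}, $\gamma_2(M)=O_{d_1,s,u}((\log n)^{d_1})$. Applying Theorem~\ref{thm:main} to the four cycle-free matrix $M$, we get $\sqrt{\dgc(M)}=O(\gamma_2(M))$, hence $\dgc(M)=O_{d_1,s,u}((\log n)^{2d_1})$. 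Since the average degree of any graph is at most twice its degeneracy, the average degree of $G$ is $O_{d_1,s,u}((\log n)^{2d_1})$.
\end{proof}
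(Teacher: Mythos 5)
Your proof is correct and essentially identical to the paper's: both combine Lemma~\ref{lemma:semilin}'s bound $\gamma_2(M)=O_{d_1,s,u}((\log n)^{d_1})$ with Theorem~\ref{thm:main}'s relation $\gamma_2(M)=\Theta(\sqrt{\dgc(M)})$ and the elementary fact that average degree is at most twice the degeneracy. The paper phrases it starting from the average degree $d$ and lower-bounding $\gamma_2(M)=\Omega(\sqrt{d})$, while you run the same chain in the opposite direction, but the content is the same.
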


\begin{proof}
Let $d$ be the average degree of $G$ and let $M$ be the bi-adjacency matrix of $G$. Then $\dgc(M)\geq d/2$, and thus by Theorem \ref{thm:main}, $\gamma_2(M)=\Omega(\sqrt{d})$. On the other hand, by Lemma \ref{lemma:semilin}, we also have $\gamma_2(M)=O_{d_1,s,u}((\log n)^{d_1})$. Therefore, we conclude that $d=O_{d_1,s,u}((\log n)^{2d_1})$
\end{proof}

In order to prove Theorem \ref{thm:main_zara} for any $t$, we have to work harder. The main idea is as follows. We consider the bi-adjacency matrix $M$ of the graph $G$, and using that it has small $\gamma_2$-norm, we show that $G$ contains a fairly dense subgraph $G'$. Then, we apply Lemma \ref{lemma:weak_upper} to $G'$. In order to find the dense subgraph, we first show that $G$ contains many four cycles, using spectral properties of $M$. Then, we argue that this is only possible if there is a group of vertices of $G$, whose neighborhoods highly overlap. Thus, picking such a group together with its neighborhood forms a dense subgraph. The core of this argument is the next lemma, which can be also found in \cite{CHHNPS}. However, due to its simplicity, we present a proof as well.

\begin{lemma}\label{lemma:trace_bound}
Let $M\in \mathbb{R}^{m\times n}$ be non-zero. Then
$$||M||_4^4\geq \frac{||M||_2^6}{mn\gamma_2(M)^2}.$$ 
\end{lemma}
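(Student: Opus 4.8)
The plan is to relate the three Schatten norms $\|M\|_2$, $\|M\|_4$, and $\|M\|_\infty$ via standard inequalities, and then feed in the estimate on the operator norm (Schatten $\infty$-norm) coming from the $\gamma_2$-norm. Write $\sigma_1\geq\sigma_2\geq\cdots\geq 0$ for the singular values of $M$, so that $\|M\|_2^2=\sum_i\sigma_i^2$, $\|M\|_4^4=\sum_i\sigma_i^4$, and $\|M\|_\infty=\sigma_1$. The elementary chain I want is
\begin{equation*}
\|M\|_2^4 = \left(\sum_i \sigma_i^2\right)^2 \leq \left(\sum_i \sigma_i^4\right)\left(\sum_i \mathbf{1}[\sigma_i\neq 0]\right)
\end{equation*}
by Cauchy–Schwarz — but that introduces the rank rather than $\sigma_1$, which is the wrong direction. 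Instead I would use the interpolation-type bound $\sum_i\sigma_i^2 \leq \sigma_1^{2/3}\sum_i \sigma_i^{4/3}\cdot(\text{something})$; more cleanly, the right inequality is
\begin{equation*}
\|M\|_2^2 = \sum_i \sigma_i^2 = \sum_i \sigma_i^{4/3}\,\sigma_i^{2/3} \leq \left(\sum_i \sigma_i^4\right)^{1/3}\left(\sum_i \sigma_i\right)^{2/3}
\end{equation*}
by Hölder with exponents $3$ and $3/2$. This gives $\|M\|_2^6 \leq \|M\|_4^4\,\|M\|_{\mathrm{tr}}^2$, i.e. $\|M\|_4^4 \geq \|M\|_2^6/\|M\|_{\mathrm{tr}}^2$. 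So it remains to show $\|M\|_{\mathrm{tr}} \leq \sqrt{mn}\,\gamma_2(M)$, which is exactly property 5 in the list of basic properties of the $\gamma_2$-norm (namely $\gamma_2(M)\geq \frac{1}{\sqrt{mn}}\|M\|_{\mathrm{tr}}$). Combining the two yields
\begin{equation*}
\|M\|_4^4 \geq \frac{\|M\|_2^6}{\|M\|_{\mathrm{tr}}^2} \geq \frac{\|M\|_2^6}{mn\,\gamma_2(M)^2},
\end{equation*}
which is the claim.

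The only genuine decision point is which Hölder/power-mean inequality to invoke; everything else is bookkeeping with singular values. I do not anticipate a real obstacle — the statement is essentially a repackaging of the trace-norm lower bound for $\gamma_2$ together with the relation among the $p=1,2,4$ Schatten norms. One small care point: the lemma hypothesis $M\neq 0$ ensures $\|M\|_2>0$ and $\|M\|_{\mathrm{tr}}>0$, so dividing is legitimate and $\gamma_2(M)>0$ as well. I would present it in three lines: (i) Hölder on the singular values giving $\|M\|_2^6\leq\|M\|_4^4\|M\|_{\mathrm{tr}}^2$; (ii) property 5 giving $\|M\|_{\mathrm{tr}}\leq\sqrt{mn}\,\gamma_2(M)$; (iii) substitute.
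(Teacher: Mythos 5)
Your proof is correct and takes essentially the same approach as the paper: both apply a H\"older-type inequality on the singular values to obtain $\|M\|_2^3 \le \|M\|_1\|M\|_4^2$ (the paper phrases it as a generalized H\"older with exponents $p_1=1,p_2=p_3=4$, you phrase it as two-exponent H\"older with $3$ and $3/2$, which is the same inequality), and then both invoke $\|M\|_{\tr}\le\sqrt{mn}\,\gamma_2(M)$ and substitute.
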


\begin{proof}
We use the following generalization of H\"{o}lder's inequality.

\begin{lemma}[Generalized H\"older's inequality]
Let $x\in \mathbb{R}^n$, and let $||x||_p=(|x(1)|^p+\dots+|x(n)|^p)^{1/p}$ denote the $p$-norm of $x$. If $p_1,\dots,p_k,r>0$ such that $\frac{1}{r}=\frac{1}{p_1}+\dots+\frac{1}{p_k}$, then $||x||_{p_1}\dots ||x||_{p_k}\geq ||x^{k}||_r $, where $x^{k}$ is the vector defined as $(x^k)(i)=(x(i))^k$.
\end{lemma}

Let $\sigma$ be the vector of singular values of $M$. Then $||\sigma||_p=||M||_p$ for any $p>0$ and $||\sigma^{k}||_r=||M||_{kr}^{k}$ for any $k,r>0$. Hence, applying H\"older's inequality  with the parameters $k=3,p_1=1,p_2=p_3=4,r=2/3$,  we arrive to the inequality
\begin{equation}\label{equ:holder}
    ||M||_{1} ||M||_4^{2}\geq ||M||_2^3.
\end{equation}
Next, we use the inequality $\gamma_2(M)\geq \frac{1}{\sqrt{mn}}||M||_{1}$ to bound $||M||_1$. From this, we get
$$||M||_4^2\geq \frac{||M||_2^3}{||M||_1}\geq \frac{||M||_2^3}{\sqrt{mn}\gamma_2(M)^2}.$$ 
This finishes the proof.
\end{proof}

Here, $||M||_2^2$ is the sum of the squares of the entries of $M$, and $||M||_4^4$ is the sum of the squares of entries of $M^TM$. In particular, if $M$ is a Boolean matrix, then $||M||_2^2$ is the number of one entries of $M$, and $||M||_4^4$ counts the number of homomorphic copies of four cycles in the corresponding bipartite graph. Before we apply the previous lemma to our matrix $M$, we do some regularization.

Say that an $m\times n$ Boolean matrix $M$ is \emph{$(p,q,d)$-biregular} if there exist integers $a,b>d$ such that every row of $M$ contains at most $a$ one entries, every column contains at most $b$ one entries, and the number of one entries of $M$ is at least $\max\{pam,qbn\}$.

\begin{lemma}\label{lemma:biregular1}
Let $M$ be an $m\times n$ Boolean matrix with average degree $d$. Then $M$ contains a submatrix $M_0$ such that either $M_0$ or $M_0^T$ is $(\frac{1}{2},\frac{1}{12\log_2 (m+n)},\frac{d}{2})$-biregular.
\end{lemma}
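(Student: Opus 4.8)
The plan is to extract from $M$ a submatrix that is simultaneously close to regular on one side and controlled on the other, using a standard two-stage dyadic pigeonholing together with the observation that one can always afford to lose a factor of $2$ on the edge count. Write $f = ||M||_2^2$ for the number of one entries of $M$, so $f \geq d(m+n)/2 \geq dm/2$ and $f \geq dn/2$; in particular the average degree being $d$ means $f = d(m+n)/2$ up to the usual convention. First I would discard all rows of $M$ with fewer than $d/2$ one entries: each such row contributes at most $d/2$ ones, and there are at most $m$ of them, so together they hold at most $dm/2 \leq f$ ... but this is too lossy, so instead I would discard rows with fewer than, say, $f/(4m)$ ones; these hold in total at most $f/4$ ones, leaving a submatrix $M_1$ with at least $3f/4$ ones, every row having at least $f/(4m) \geq d/8$ ones. (One should be slightly careful about constants here, but the point is that deleting the low-degree rows costs only a constant fraction of the ones.)

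Next comes the dyadic step on the column degrees of $M_1$. Partition the columns of $M_1$ into buckets $C_0, C_1, \dots$ where $C_j$ consists of columns whose number of one entries lies in $[2^j, 2^{j+1})$. Since every column degree is between $1$ and $m$, there are at most $\lceil \log_2 m \rceil \leq \log_2(m+n)$ nonempty buckets, so by pigeonhole some bucket $C_j$ accounts for at least a $\frac{1}{\log_2(m+n)}$-fraction of the ones remaining in $M_1$. Let $M_0 = M_1[\,\cdot\,, C_j]$ restricted to the rows still meeting $C_j$. Set $b = 2^{j+1}$, so every column of $M_0$ has at most $b$ one entries, and the number of ones $f_0$ of $M_0$ satisfies $f_0 \geq \frac{3f}{4\log_2(m+n)} \geq \frac{f_0}{?}$ — concretely $f_0 \geq \frac{1}{4\log_2(m+n)} f$, hence $f_0 \geq b n_0$ times a suitable constant where $n_0 = |C_j|$, because every column of $M_0$ has at least $b/2$ ones, giving $f_0 \geq (b/2) n_0$, i.e. $M_0$ is $q$-regular on the column side with $q = 1/2$. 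For the row side, every row of $M_0$ still has at least $d/8 \geq d/12$ ... here I need to recheck against the claimed parameter $d/2$: since the lemma allows us to choose which of $M_0, M_0^T$ is biregular, I would run the column-bucketing argument on whichever side ($m$ rows or $n$ columns) has the larger class, and track the constants so that the surviving degrees beat $d/2$; the factor $\frac{1}{12\log_2(m+n)}$ in the statement is exactly what one gets after paying one $\log$ for the bucketing and a constant for the two deletions.

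The one genuine subtlety — and the step I expect to need the most care — is making the bookkeeping honest: after deleting low-degree rows and restricting to a degree-bucket of columns, one must verify that the rows of the final submatrix $M_0$ still have enough ones (at least $\frac{d}{2}$), since restricting to $C_j$ can shrink row degrees. This is handled by choosing the deletion threshold and bucket so that the total loss is a bounded fraction of $f$ and then observing that a row surviving in $M_0$ keeps at least its share of ones relative to the $\max\{pam, qbn\}$ bound; equivalently, one notes that $f_0 \geq \frac{f}{O(\log(m+n))} \geq \frac{d \cdot \max\{m_0,n_0\}}{O(\log(m+n))}$, and since there are integers $a, b$ bounding the row and column degrees with the number of ones at least $\max\{\frac12 a m_0, \frac{1}{12\log_2(m+n)} b n_0\}$, we get precisely a $(\frac12, \frac{1}{12\log_2(m+n)}, \frac d2)$-biregular matrix (or its transpose, depending on which side we bucketed). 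The remainder is routine constant-chasing.
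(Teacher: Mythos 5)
Your pruning-plus-dyadic-bucketing outline is in the right spirit, but it is missing the key regularization step, and as written the row-side inequality of biregularity cannot be established. The definition of $(p,q,d)$-biregularity calls for an integer $a > d/2$ that upper-bounds \emph{every} row degree of the submatrix, together with $\#\{\text{ones}\} \geq p\,a\,m_0$. You only control the \emph{minimum} row degree, by deleting rows with fewer than $f/(4m)$ ones; that does nothing to bound $a$ from above. After restricting to your column bucket $C_j$, a single surviving row could have degree $|C_j| = n_0$, forcing $a \geq n_0$, and then $p\,a\,m_0 \geq p\,n_0 m_0$ can far exceed the actual number of ones, so no constant $p$ works.

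The paper handles this by first applying Lemma~\ref{lemma:regularize}, which passes to an induced subgraph of \emph{maximum} average degree and then deletes the high-degree vertices on one side; the resulting submatrix has minimum degree $\geq d'/2$ on both sides \emph{and} maximum row degree at most $a := 6d'$. Having $a$ fixed before the column bucketing is exactly what makes the chain
\[
\#\{\text{ones of } M'\} \;\geq\; \frac{||M_0||_2^2}{\log_2 n} \;\geq\; \frac{d' m_0/2}{\log_2 n} \;=\; \frac{a m_0}{12\log_2 n}
\]
go through: the middle inequality uses the minimum-degree lower bound, and the final rewriting in terms of $a$ uses the a priori upper bound $a = 6d'$. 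Your sketch contains no analogue of this upper bound, and that is precisely the step you defer to ``routine constant-chasing'' --- it is not routine, it is the point of Lemma~\ref{lemma:regularize}. A secondary slip: the column bucketing yields the clean factor ($1/2$) on the \emph{bucketed} side and the logarithmic factor on the opposite side; your closing display has these two roles reversed, which is why the lemma must be stated for $M_0$ or $M_0^T$.
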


\begin{proof}
Let $A$ and $B$ be the vertex classes of $G$. First, we apply Lemma \ref{lemma:regularize} to find an $m_0\times n_0$ sized submatrix $M_0$ such that the degree of $M_0$ is $d'\geq d/3$, its minimum degree is at least $d'/2$, and, without loss of generality, every row of  $M_0$ has at most $a=6d'$ one entries. By a standard dyadic pigeon-hole argument, we can find a positive real number $b\geq d'$ and a subset of columns such that the submatrix $M'$ of $M_0$ formed by these columns contains at least $\frac{||M_0||_2^2}{\log_2 n}\geq \frac{m_0a}{12\log_2 n}$ one entries, and  $b/2<||M'||_{\col}^2\leq b$. If $M'$ has $m'$ columns, then $||M'||_2^2\geq m'b/2$, so $M'$ is $(\frac{1}{12\log_2 n},\frac{1}{2},\frac{d}{2})$-biregular.
\end{proof}

\begin{lemma}\label{lemma:biregular2}
Let $M$ be an $m\times n$ sized $(p,q,d)$-biregular matrix, and let $\alpha=\frac{p^2q}{2\gamma_2(M)^2}$. Then for every $z<\alpha d$,  $M$ contains a $z\times z$ sized submatrix with average degree at least $\alpha z$.
\end{lemma}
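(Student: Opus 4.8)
The plan is to use the four-cycle count bound from Lemma~\ref{lemma:trace_bound} to locate a group of rows whose neighborhoods overlap heavily, and then take that group together with its common column-support to form the required dense submatrix. Concretely, let $a,b>d$ be the degree bounds witnessing $(p,q,d)$-biregularity, so every row has at most $a$ ones, every column at most $b$ ones, and the number $f$ of one-entries satisfies $f\geq pam$ and $f\geq qbn$. Apply Lemma~\ref{lemma:trace_bound} to $M$: since $\|M\|_2^2=f$, we get
$$\|M\|_4^4\geq \frac{f^3}{mn\,\gamma_2(M)^2}.$$
Here $\|M\|_4^4=\sum_{i,i'}\langle r_i,r_{i'}\rangle^2$, where $r_1,\dots,r_m$ are the rows of $M$ and $\langle r_i,r_{i'}\rangle$ is the number of common ones (the codegree) of rows $i,i'$. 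The diagonal terms contribute $\sum_i \|r_i\|_2^4=\sum_i (\text{row sum}_i)^2\leq a\sum_i \text{row sum}_i = af$, which is negligible compared to $f^3/(mn\gamma_2(M)^2)$ as long as $d$ (hence $f/m\approx$ average row degree) is large relative to $\gamma_2(M)^2$ — and for small $d$ the statement is vacuous since $\alpha d<1$ forces there to be nothing to prove. So the off-diagonal codegree-square sum is $\Omega(f^3/(mn\gamma_2(M)^2))$.

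Next I would extract a single heavy row and its heavy partners. By averaging over the $i$, there is a row $i_0$ with $\sum_{i'\neq i_0}\langle r_{i_0},r_{i'}\rangle^2 \geq \Omega(f^3/(m^2 n\gamma_2(M)^2))$. Since $\langle r_{i_0},r_{i'}\rangle\leq \|r_{i_0}\|_2^2\leq a$ for all $i'$, we have $\sum_{i'}\langle r_{i_0},r_{i'}\rangle^2 \leq a\sum_{i'}\langle r_{i_0},r_{i'}\rangle = a\sum_{j: M(i_0,j)=1}(\text{col sum}_j)\leq a\cdot (\text{row sum}_{i_0})\cdot b\leq a^2 b$. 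Rather than go through $i_0$, the cleaner route is: let $S$ be the support (set of columns with a one) of row $i_0$; then $\sum_{i'}\langle r_{i_0},r_{i'}\rangle = \sum_{j\in S}(\text{col sum}_j)$, and by Cauchy--Schwarz,
$$\Big(\sum_{i'}\langle r_{i_0},r_{i'}\rangle\Big)^2\leq |{\{i': \langle r_{i_0},r_{i'}\rangle\geq 1\}}|\cdot \sum_{i'}\langle r_{i_0},r_{i'}\rangle^2.$$
Combining with the lower bound on the square-sum and an appropriate lower bound on $\sum_{i'}\langle r_{i_0},r_{i'}\rangle$ (which is at least the row sum of $i_0$, but we want it large — here is where I'd instead pick the row $i_0$ to also have large degree, possible since the total degree is $f$), one deduces that the number of rows meeting $S$, call it $N_S=|\{i': r_{i'} \text{ meets } S\}|$, together with the size $|S|$ and the edge count inside $S\times\{i': r_{i'}\text{ meets }S\}$, gives a submatrix on roughly $|S|\leq a$ columns and $N_S$ rows whose edge count is $\sum_{j\in S}\text{col sum}_j$. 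The key inequality to nail down is that this submatrix has average degree $\Omega(p^2q\, d/\gamma_2(M)^2)=\Omega(\alpha d)$; then restricting to any $z\leq \alpha d$ of its rows and columns (it has at least that many of each, using $a,b>d$ and the biregularity ratios $p,q$) and using that a subgraph of a graph of average degree $D$ on a balanced vertex subset still has comparable average degree after the standard "delete low-degree vertices" cleanup, yields the $z\times z$ submatrix of average degree $\geq \alpha z$.

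The main obstacle is the bookkeeping that turns the codegree-square sum into an honest dense submatrix with the correct dependence on $p,q,\gamma_2(M)$: one must be careful that the chosen heavy row has \emph{both} large degree (so $\sum_{i'}\langle r_{i_0},r_{i'}\rangle$ is large, exploiting $f\geq qbn$ to control column sums and $f\geq pam$ for the row side) \emph{and} concentrated codegrees, and that after passing to $S$ and its neighborhood the two side-sizes are both at least $\alpha d$ so that any target $z<\alpha d$ fits. I expect the proof to do a final regularization/cleanup (delete rows or columns of degree below half the average) to guarantee the minimum-degree, hence average-degree, bound survives restriction to a $z\times z$ window; this is routine given Lemma~\ref{lemma:regularize}-style arguments but is where the constant $\frac{1}{2}$ in $\alpha=\frac{p^2q}{2\gamma_2(M)^2}$ gets spent.
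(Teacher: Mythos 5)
Your high-level plan is indeed the paper's plan: apply Lemma~\ref{lemma:trace_bound} to get $\|M\|_4^4\geq \|M\|_2^6/(mn\gamma_2(M)^2)$, observe that $\|M\|_4^4$ counts ``squares'' (4-tuples $(i,i',j,j')$ with all four entries one), average over the first coordinate to find a row $i_0$ that sits in many squares, restrict to the columns $J$ supporting $r_{i_0}$, and look for density inside $M'=M[[m]\times J]$. The diagonal-term estimate you give is fine (and the paper doesn't even bother with it, since including degenerate squares only helps the lower bound).

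Where your sketch has a real gap is the middle step. The Cauchy--Schwarz inequality you write, $\bigl(\sum_{i'}\langle r_{i_0},r_{i'}\rangle\bigr)^2\le N_S\cdot\sum_{i'}\langle r_{i_0},r_{i'}\rangle^2$, pushes in the wrong direction: given a lower bound on the square-sum, it only lower-bounds $N_S$, whereas you need to isolate a \emph{small} set of rows carrying the bulk of the mass. The paper instead runs a threshold argument: writing $s_i$ for the row sums of $M'$, it uses the \emph{upper} bound $\sum s_i\le |J|\,b\le ab$ (from the column-degree cap, not a lower bound obtained by choosing $i_0$ of large degree, which you suggest and which is not needed) together with the lower bound $\sum s_i^2\ge N/m\ge p^2qa^2b/\gamma_2(M)^2$. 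Setting $x=\tfrac{p^2qa}{2\gamma_2(M)^2}$, the light rows contribute at most $x\sum s_i\le\tfrac12\sum s_i^2$, so the heavy rows ($s_i>x$) carry half the square-sum and hence there are at least $t\ge p^2qb/(2\gamma_2(M)^2)$ of them, each with $>x$ ones inside $J$. This is the step your plan does not actually carry out, and Cauchy--Schwarz as stated will not produce it.

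Your final step (``delete low-degree vertices and take a balanced subset'') is also not quite what is needed: the submatrix on the $t$ heavy rows and the $|J|\le a$ columns may be very unbalanced ($t$ much larger or smaller than $|J|$), and a deletion cleanup does not obviously preserve the density when you force both sides to exactly $z$. The paper finishes with a random $z\times z$ restriction of the heavy-row submatrix: the expected number of ones is $tx\cdot\frac{z}{t}\cdot\frac{z}{|J|}=z^2\frac{x}{|J|}\ge \frac{p^2q}{2\gamma_2(M)^2}\,z^2=\alpha z^2$, and the constraints $t,|J|\ge x\ge \alpha d\ge z$ (using $a,b>d$) make the sampling valid. That expectation argument is what actually spends the constant $\tfrac12$ in $\alpha$, and it is cleaner and strictly necessary, not a routine regularization afterthought.
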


\begin{proof}
Let $a,b\geq d$ such that  every row of $M$ contains at most $a$ one entries, every column contains at most $b$ one entries, and the number of one entries of $M$ is at least $\max\{pam,qbn\}$. By Lemma~\ref{lemma:trace_bound}, 
$$||M||_4^4\geq \frac{||M||_2^6}{mn\gamma_2(M)^2}.$$
Here, $||M||_2^2\geq \max\{pam,qbn\}$, so the right-hand-side can be lower bounded by 
$$\frac{p^2qa^2bm}{\gamma_2(M)^2}.$$
Write $N=||M||_4^4=\tr((MM^T)^2)$, then $N$ is the number of 4-tuples $(i,i',j,j')\in [m]^2\times [n]^2$ such that $M(i,j)=M(i,j')=M(i',j)=M(i',j')=1$. Call such a 4-tuple a \emph{square}. There exists an index $i_0\in [m]$ such that $i_0$ is the first coordinate in at least $\frac{N}{m}$ squares. Let $J\subset [n]$ be the set of indices $j$ such that $M(i_0,j)=1$, and let $M'=M[[m]\times J]$. Let $s_1,\dots,s_m$ be the number of one entries in the rows of $M'$, then the number of squares in which $i_0$ is the first coordinate is $s_1^2+\dots+s_m^2$, so 
\begin{equation}\label{equ:x1}
    s_1^2+\dots+s_m^2\geq \frac{N}{m}\geq \frac{p^2qa^2b}{\gamma_2(M)^2}. 
\end{equation}
On the other hand, we know that $|J|\leq a$, and as each column of $M'$ contains at most $b$ one entries, we also have $s_1+\dots+s_m\leq |J|b\leq ab$. Let $x=\frac{p^2qa}{2\gamma_2(M)^2}$, and assume that there are $t$ numbers among $s_1,\dots,s_m$ that are larger than $x$. Then
\begin{equation}\label{equ:x2}
s_1^2+\dots+s_m^2\leq t|J|^2+x(s_1+\dots+s_m)\leq ta^2+xab.
\end{equation}
Comparing (\ref{equ:x1}) and (\ref{equ:x2}), we get 
$$t\geq \frac{p^2qb}{2\gamma_2(M)^2}.$$
Let $M''$ be the submatrix of $M'$, where each row contains at least $x$ one entries. Then  $M''$ has $t$ rows, and at least $tx$ one entries. Finally, let $z\leq \frac{p^2qd}{2\gamma_2(M)^2}=\alpha d$. Then $t\geq z$ and $|J|\geq x\geq z$. Let $M_0$ be a random $z\times z$ submatrix of $M''$, chosen from the uniform distribution. Then the expected number of one entries of $M_0$ is at least
$$tx\cdot \frac{z}{t}\cdot\frac{z}{|J|}=z^2\frac{x}{|J|}\geq \frac{p^2q z^2}{2\gamma_2(M)^2}.$$
Therefore, there exists a choice for the $z\times z$ matrix $M_0$ such that the average degree of $M_0$ is at least $\frac{p^2q z}{2\gamma_2(M)^2}=\alpha z$, finishing the proof.
\end{proof}

Combining the previous two lemmas, we get the following corollary.

\begin{lemma}\label{lemma:dense_submatrix}
Let $M$ be an $m\times n$ Boolean matrix of average degree $d$, and let $$\alpha=(200\gamma_2(M)^2\log_2(m+n))^{-1}.$$ Then for every $z\leq \alpha d$, $M$ contains a $z\times z$ submatrix of average degree at least $\alpha z$. 
\end{lemma}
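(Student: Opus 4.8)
The plan is to chain Lemma~\ref{lemma:biregular1} and Lemma~\ref{lemma:biregular2} and to keep careful track of the constants. First I would apply Lemma~\ref{lemma:biregular1} to $M$: this produces a submatrix $M_0$ of $M$ such that, after transposing if necessary, the matrix $M_1\in\{M_0,M_0^T\}$ is $(\tfrac12,\tfrac1{12\log_2(m+n)},\tfrac d2)$-biregular. Two observations make the transpose harmless. First, $\gamma_2$ is invariant under transposition (immediate from the definition, by swapping the roles of the two factors $U$ and $V$), and $\gamma_2(M_0)\le\gamma_2(M)$ by monotonicity; hence $\gamma_2(M_1)\le\gamma_2(M)$. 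Second, transposing a square submatrix changes neither its number of one entries nor its number of rows plus columns, so it preserves the average degree; thus a $z\times z$ submatrix of $M_1$ of average degree at least $c$ yields a $z\times z$ submatrix of $M_0$, hence of $M$, of average degree at least $c$.

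Next I would apply Lemma~\ref{lemma:biregular2} to $M_1$ with $p=\tfrac12$, $q=\tfrac1{12\log_2(m+n)}$ and biregularity parameter $\tfrac d2$. The associated constant is
$$\alpha_1\ =\ \frac{p^2q}{2\gamma_2(M_1)^2}\ =\ \frac{1}{96\,\gamma_2(M_1)^2\log_2(m+n)}\ \ge\ \frac{1}{96\,\gamma_2(M)^2\log_2(m+n)}\ \ge\ \alpha,$$
and the lemma yields, for every $z<\alpha_1\cdot\tfrac d2$, a $z\times z$ submatrix of $M_1$ of average degree at least $\alpha_1 z$.

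Finally I would check that the target regime lies inside this range. Since $96<200$, we have $\alpha_1\cdot\tfrac d2\ge \tfrac{d}{192\,\gamma_2(M)^2\log_2(m+n)}>\tfrac{d}{200\,\gamma_2(M)^2\log_2(m+n)}=\alpha d$, so every positive integer $z\le\alpha d$ satisfies $z<\alpha_1\cdot\tfrac d2$. Applying the previous paragraph and transporting the resulting submatrix back into $M$ (as justified above), we obtain a $z\times z$ submatrix of $M$ of average degree at least $\alpha_1 z\ge\alpha z$, as required. There is no genuine obstacle here — the statement is essentially a repackaging of the two preceding lemmas — so the only points that demand care are the transpose bookkeeping (tracking which of $M_0,M_0^T$ is biregular) and the constant chase, where the slack between $96$, $192$ and $200$ is exactly what makes the argument close.
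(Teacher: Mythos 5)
Your proof is correct and follows the same route as the paper's: apply Lemma~\ref{lemma:biregular1} to extract a biregular submatrix and then apply Lemma~\ref{lemma:biregular2}, noting that $\alpha_1 = 1/(96\gamma_2(M_1)^2\log_2(m+n))$ and $\alpha_1 d/2 = d/(192\gamma_2(M_1)^2\log_2(m+n))$ both dominate the corresponding quantities with the constant $200$. The paper handles the transpose case with a one-line ``without loss of generality''; your more explicit bookkeeping of the transpose (using transpose-invariance and monotonicity of $\gamma_2$) is the same argument spelled out.
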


\begin{proof}
By Lemma \ref{lemma:biregular1}, $M$ contains a submatrix $M_0$ such that either $M_0$ or $M_0^T$ is $(\frac{1}{2},\frac{1}{12\log_2 (m+n)},\frac{d}{2})$-biregular. Without loss of generality, we may assume that the first case happens. Then applying Lemma \ref{lemma:biregular2} to $M_0$ gives the desired submatrix.
\end{proof}

We are ready to prove the main theorem of this section.

\begin{proof}[Proof of Theorem \ref{thm:main_zara}]
Let $D$ be the average degree of $G$, and let $M$ be its bi-adjacency matrix. Assume that $D\geq Ct(\log n)^{4d_1+2}(\log\log n)^{s}$, where $C$ is a sufficiently large  constant only depending on $d_1,s,u$. By Lemma \ref{lemma:semilin}, $\gamma_2(M)=O_{d_1,s,u}((\log n)^{d_1})$. Let $$\alpha=\frac{1}{400\gamma_2(M)^2\log_2(n)}=\Omega_{d_1,s,u}((\log n)^{-2d_1-1}),$$ then by Lemma \ref{lemma:dense_submatrix}, $M$ contains a $z\times z$ submatrix $M'$ with $$z=\alpha D=\Omega_{d_1,s,u}(Ct(\log n)^{2d_1+1}(\log\log n)^s))$$ such that the average degree of $M'$ is at least $\alpha z=\Omega_{d_1,s,u}(Ct(\log\log n)^s)$. Here, $M'$ is the bi-adjacency matrix of a semilinear graph of complexity $(s,u)$ on $2z$ vertices with no $K_{t,t}$. Hence,  by Lemma \ref{lemma:weak_upper}, its average degree is at most 
\begin{align*}
    D_0&=O_{s,u}(t(\log (z/t))^{s})=O_{d_1,s,u}\left(t\left[\log C (\log n)^{2d_1+1}(\log \log n)^s\right]^s\right)\\
    &=O_{d_1,s,u}(t\log C+t(\log\log n)^s).
\end{align*}
 By choosing $C$ sufficiently large, we get that $\alpha z>D_0$, contradiction.
\end{proof}

\section{Concluding remarks}

We proved that for every $\gamma>1$ and integer $t\geq 2$, there exists $d(\gamma,t)$ such that every Boolean matrix with no $t\times t$ all-ones submatrix and $\gamma_2$-norm at most $\gamma$ has average degree at most $d(\gamma,t)$. It would be interesting to understand how  $d(\gamma,t)$ depends on the parameters $\gamma$ and $t$.
\begin{problem}
How does $d(\gamma,t)$ depend on $\gamma$ and $t$?
\end{problem}

For $t=2$, we proved the sharp bound $d(\gamma,2)=O(\gamma^2)$. However, for $t>2$, our proof only implies $d(\gamma,t)=t^{O(\gamma^8)}$. On the other, we believe that $d(\gamma,t)$ should grow linearly in $t$, which is also closely related to  Conjecture I in \cite{HHH}. In contrast, we prove that $d(\gamma,t)$ grows at least exponentially in $\gamma$.

\begin{lemma}
Let $\gamma>4$ and $n$ be sufficiently large with respect to $\gamma$. Then there exists an $n\times n$ Boolean matrix $M$ with $(1-o(1))n^2$ one entries, $\gamma_2(M)\leq \gamma$, such that $M$  contains no $t\times t$ all-ones submatrix for $t>4\cdot 2^{-\gamma}n$. 

In particular, for every $\gamma>1$ and every $t>t_0(\gamma)$, $d(\gamma,t)=\Omega(2^{\gamma}t)$.
\end{lemma}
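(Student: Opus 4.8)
The plan is to construct $M$ as a Kronecker power of a small, carefully chosen ``almost-all-ones'' Boolean matrix. Let $k\geq 1$ be a parameter and let $M_0$ be the $2^k\times 2^k$ Boolean matrix that is all-ones except for a single $1\times 1$ zero entry, or more usefully, let $M_0$ be the all-ones matrix minus a blocky matrix whose blocks are small. The cleanest choice: take $J_2-I_2$, the $2\times 2$ matrix $\begin{pmatrix}0&1\\1&0\end{pmatrix}$ — no, that has too few ones. Instead take the $\ell\times \ell$ all-ones matrix $J_\ell$ and subtract a permutation matrix, giving $J_\ell - P_\ell$, which has $\gamma_2 \leq \gamma_2(J_\ell) + \gamma_2(P_\ell) = 1 + 1 = 2$ (since $J_\ell$ and any permutation matrix are blocky, hence $\gamma_2 = 1$), has $\ell^2-\ell$ one entries, and — crucially — its largest all-ones submatrix has size $(\ell-1)\times(\ell-1)$. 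Now set $M = (J_\ell - P_\ell)^{\otimes r}$ for a suitable $r$.

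First I would record the three properties of $M$ using the lemmas listed in the preliminaries. By property 6 of the $\gamma_2$-norm (multiplicativity under Kronecker product), $\gamma_2(M) = \gamma_2(J_\ell-P_\ell)^r \leq 2^r$. The number of one entries of $M$ is $(\ell^2-\ell)^r = \ell^{2r}(1-1/\ell)^r$, and the total number of entries is $\ell^{2r} = n^2$ where $n = \ell^r$; so by choosing $\ell$ large relative to $r$, the fraction of one entries is $(1-1/\ell)^r = 1-o(1)$. For the forbidden all-ones submatrix: a $t\times t$ all-ones submatrix of a Kronecker product $A\otimes B$ forces, by a standard argument, an all-ones combinatorial rectangle structure; iterating, the largest all-ones submatrix of $(J_\ell-P_\ell)^{\otimes r}$ has each side of size at most $(\ell-1)^r = n(1-1/\ell)^r$. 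So $M$ contains no $t\times t$ all-ones submatrix once $t > n(1-1/\ell)^r$.

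Next I would tune the parameters. Given $\gamma>4$, pick $r = \lfloor \log_2 \gamma\rfloor$, so $2^r \leq \gamma$ and hence $\gamma_2(M)\leq\gamma$. Then $(1-1/\ell)^r \geq 1 - r/\ell$, which is $1-o(1)$ as $\ell\to\infty$ with $r$ fixed; and we need $n = \ell^r$ large, which is automatic for large $\ell$. The forbidden-submatrix threshold becomes $t > n(1-1/\ell)^r$. To match the claimed bound $t > 4\cdot 2^{-\gamma} n$ I would instead use a matrix with a larger ``defect'': replace $J_\ell - P_\ell$ by $J_\ell - B$ where $B$ is a blocky matrix with $\ell/2$ blocks each of size $2\times 2$ along the diagonal (a perfect matching blown up), so $\gamma_2(J_\ell - B)\leq 2$ still, the number of ones is $\ell^2 - 2\ell$, and the largest all-ones submatrix has side at most $\ell/2$. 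Taking the $\otimes r$-th power with $r \approx \gamma$ gives largest all-ones side $\leq (\ell/2)^r = n 2^{-r}$, and absorbing constants yields the threshold $4\cdot 2^{-\gamma} n$; the density of ones is still $(1-2/\ell)^r = 1-o(1)$ for $\ell$ large. The final sentence then follows by contraposition: if $d(\gamma,t)$ were $o(2^\gamma t)$, then for $t$ just above the threshold $M$ would have average degree $n\cdot(1-o(1)) \gg d(\gamma,t)$ while being $t\times t$-all-ones-free with $\gamma_2\leq\gamma$, contradicting the definition of $d(\gamma,t)$ — here one checks $n(1-o(1)) = \Omega(2^\gamma t)$ since $t = \Theta(2^{-\gamma}n)$.

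The main obstacle I anticipate is the all-ones submatrix bound for the Kronecker power: one must show cleanly that $(A)^{\otimes r}$ has no large all-ones submatrix when $A$ does, i.e. that the ``fooling set'' / combinatorial-rectangle dimension multiplies. This is intuitively clear — a large all-ones submatrix of $A\otimes B$, viewed through the index decomposition $(i,i')\times(j,j')$, projects to all-ones structure in both $A$ and $B$ on the relevant index sets — but writing it carefully requires a short double-counting or pigeonhole argument (if the submatrix is indexed by $S\times S'$ on the $A$-side and $T\times T'$ on the $B$-side after grouping, then $A[S,S']$ and $B[T,T']$ must both be all-ones). A secondary, purely bookkeeping, obstacle is choosing $\ell$ and $r$ as explicit functions of $\gamma$ and the desired $n$ so that simultaneously $\gamma_2(M)\leq\gamma$, the one-entry fraction is $1-o(1)$, and the threshold has the exact constant $4$; this is routine but needs care with the rounding in $r=\lfloor\log_2\gamma\rfloor$ versus $r\approx\gamma$.
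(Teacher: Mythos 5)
Your Kronecker-power approach is genuinely different from the paper's (which uses a \emph{random} linear hypergraph: sample $\lfloor\gamma-1\rfloor$-element subsets of an $m$-element ground set so that pairwise intersections have size at most one, take $M_0=UV$ with $U,V$ the $0/1$ incidence matrices so that $\gamma_2(M_0)\leq\lfloor\gamma-1\rfloor$, verify by Chernoff + union bound that $M_0$ has no large all-zeros submatrix, then complement). But your parameter tuning is self-contradictory, and the gap is not cosmetic.

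The tradeoff your construction achieves is the wrong one. For $M=A^{\otimes r}$ with base $A=J_\ell-B$, property 6 gives $\gamma_2(M)=\gamma_2(A)^r$, and your all-ones-rectangle projection argument (which is correct) gives maximal all-ones square side $\leq (\ell/2)^r = n\cdot 2^{-r}$ where $n=\ell^r$. But $\gamma_2(A)$ is bounded \emph{away from} $1$: in fact $J_\ell-B=(J_{\ell/2}-I_{\ell/2})\otimes J_2$, and $\gamma_2(J_m-I_m)\geq\frac{1}{m}\|J_m-I_m\|_{\tr}=\frac{2(m-1)}{m}\to 2$, so $\gamma_2(A)\to 2$ as $\ell\to\infty$ and $\gamma_2(M)\to 2^r$. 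Consequently the constraint $\gamma_2(M)\leq\gamma$ forces $r\lesssim\log_2\gamma$, and then the all-ones threshold is only about $n/\gamma$ --- polynomially small in $\gamma$, not the exponentially small $4\cdot 2^{-\gamma}n$ the lemma asserts. When you later ``switch to $r\approx\gamma$'' to fix the threshold, you silently drop the $\gamma_2$ constraint: that choice gives $\gamma_2(M)\approx 2^\gamma\gg\gamma$. The two requirements cannot be met simultaneously by a Kronecker power of a fixed small matrix, because the exponent $r$ governs both quantities at the same rate. The ``in particular'' clause inherits the same loss: your construction would only show $d(\gamma,t)=\Omega(\gamma t)$, not $\Omega(2^\gamma t)$.

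A secondary, minor slip: the largest all-ones square in $J_\ell-P_\ell$ has side $\lfloor\ell/2\rfloor$, not $\ell-1$ (a rectangle $R\times C$ avoiding the permutation satisfies $|R|+|C|\leq\ell$), so your first and second base matrices actually yield the same threshold exponent and the ``switch'' buys nothing. The real reason the paper's construction achieves the exponential gap is that its base object is not a constant-size matrix Kronecker-powered up: the random $\ell$-uniform linear hypergraph on an $m$-point ground set gives a \emph{single} $n\times n$ matrix with $\gamma_2\leq\ell$ whose complement has no $t\times t$ all-ones for $t>O(2^{-\ell})n$, the $2^{-\ell}$ coming from the probability that a random $\ell$-set falls inside a fixed half of the ground set. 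That decoupling of $\gamma_2$ (linear in $\ell$) from the defect exponent ($2^{-\ell}$) is exactly what a Kronecker power of a fixed base cannot provide.
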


\begin{proof}
 Let $\ell=\lfloor\gamma-1\rfloor>2$, and let $m$ be an integer sufficiently large with respect to $\ell$. Let $S$ be an $m$ element ground set, let $p=m^{3/2-\ell}$, and let $\mathcal{F}$ be a random sample of the $\ell$-element subsets of $S$, where each $\ell$-element set is included independently with probability $p$. Let $X=|\mathcal{F}|$, then $\mathbb{E}(X)=p\binom{m}{\ell}=\Omega_{\ell}(m^{3/2})$, and by standard concentration arguments, $\mathbb{P}(X>\mathbb{E}(X)/2)>0.9$. Let $Y$ be the number of pairs of sets in $\mathcal{F}$, whose intersection has size at least two. Then $\mathbb{E}(Y)<p^2m^{2\ell-2}=m$, so by Markov's inequality, $\mathbb{P}(Y<10m)\geq 0.9$. Furthermore, let $Y'$ be the number of pairs of sets in $\mathcal{F}$, whose intersection has size exactly 1. Then $\mathbb{E}(Y')\leq p^2 m^{2\ell-1}=m^2$, so by Markov's inequality, $\mathbb{P}(Y'<10m^2)\geq 0.9$.  Finally, let $T\subset S$ be any set of size $m/2$, and let $Z_T$ be the number of elements of $\mathcal{F}$ completely contained in $T$. Then $\mathbb{E}(Z_T)=p\binom{m/2}{\ell}<2^{-\ell}\mathbb{E}(X)$. By the multiplicative Chernoff inequality, we can write
$$\mathbb{P}(Z_T\geq 2\mathbb{E}(Z_T))\leq \exp\left(-\frac{1}{3}\mathbb{E}(Z_T)\right)\leq \exp(-\Omega_{\ell}(m^{3/2})).$$
Hence, as the number of $m/2$ element subsets of $S$ is at most $2^m$, a simple application of the union bound  implies $$\mathbb{P}(\forall T\subset S, |T|=m/2: Z_T\leq 2\mathbb{E}(Z_T))>0.9.$$
In conclusion, there exists a choice for $\mathcal{F}$ such that $X>\mathbb{E}(X)/2$, $Y<10m$, $Y'<10m^2$, and $Z_T\leq 2\mathbb{E}(Z_T)\leq 2\cdot 2^{-\ell}\mathbb{E}(X)$ for every $m/2$ element set $T$. For each  pair of sets intersecting in more than one element in $\mathcal{F}$, remove one of them from $\mathcal{F}$, and let $\mathcal{F}'$ be the resulting set. Let $n=|\mathcal{F}'|/2=\Omega_{\ell}(m^{3/2})$, then $n>X/2-5m\geq \mathbb{E}(X)/4=\Omega_{\ell}(m^{3/2})$, and thus $Z_T\leq 8\cdot 2^{-\ell}n$ for every $T$.

Define the $n\times n$ matrix $M_0$ as follows. Let $\mathcal{A}\cup\mathcal{B}$ be an arbitrary partition of $\mathcal{F}'$ into two $n$ element sets. Let $U$ be the $n\times m$ matrix, whose rows are the characteristic vectors of the elements of $\mathcal{A}$, let $V$ be the $m\times n$ matrix, whose columns are the  characteristic vectors of the elements of $\mathcal{B}$, and set $M_0=UV$. As each row of $U$ and each column of $V$ is a zero-one vector with $\ell$ one entries, we have $\gamma_2(M_0)\leq ||U||_{\row}||V||_{\col}=\ell$. Also, $M_0$ is a Boolean matrix, which is guaranteed by the fact that any two distinct sets in $\mathcal{F}'$ intersect in 0 or 1 elements. The number of one entries of $M_0$ is at most $Y'<10m^2=o(n^2)$. Finally, $M_0$ contains no $t\times t$ all-zeros submatrix if $t>8\cdot 2^{-\ell}n$. Indeed, a $t\times t$ all-zeros submatrix corresponds to subfamilies $\mathcal{A}'\subset \mathcal{A}$ and $\mathcal{B}'\subset \mathcal{B}$ of sizes $t$ such that every element of $\mathcal{A}'$ is disjoint from every element of $\mathcal{B}'$. In other words, if $T_1=\bigcup_{A\in\mathcal{A}'}A$  and $T_2=\bigcup_{B\in\mathcal{B}'} B$, then $T_1$ and $T_2$ are disjoint. But then at least one of $T_1$ or $T_2$ has size at most $m/2$, without loss of generality, $|T_1|\leq m/2$. The number of elements of $\mathcal{F}'$ contained in $T_1$ is at most $8\cdot 2^{-\ell}n$, so we indeed have $t\leq 8\cdot 2^{-\ell}n$.

In order to get our desired matrix $M$, we just take the complement of $M_0$, that is, $M=J-M_0$. Then $\gamma_2(M)\leq 1+\gamma_2(M_0)\leq \ell+1\leq \gamma$, and $M$ satisfies the desired properties.
\end{proof}

 \section*{Acknowledgments}
 We would like to thank Zach Hunter, Aleksa Milojevi\'c, and Benny Sudakov for many fruitful discussions, and Lianna Hambardzumyan for her comments  that helped greatly improving the paper. Furthermore, we would like to thank Shachar Lovett for pointing out that certain results about the discrepancy in the previous version of our paper were trivial.

\end{document}